 \newtheorem{thm}{Theorem}[section]
 \newtheorem{prop}[thm]{Proposition}
 \newtheorem{lem}[thm]{Lemma}
 \newtheorem{cor}[thm]{Corollary}
 \newtheorem{exm}[thm]{Example}
 \newtheorem{rem}[thm]{Remark}
 \newtheorem{que}[thm]{Question}
 \newtheorem{dfn}{Definition}[section]
\numberwithin{equation}{section}
\title{Geometrical realisations of the simple permutoassociahedron 
by Minkowski sums}
\author{Jelena Ivanovi\' c}
\date{}
\newcommand{\changefont}{
    \fontsize{7}{12}\selectfont
}
\begin{document}

\begin{abstract}
This paper introduces a family of $n$-polytopes, $PA_{n,c}$ which is a geometrical realisation of simple permutoassociahedra. It has significant importance serving as a topological proof of Mac Lane's coherence.
Polytopes in this family are defined as Minkowski sums of certain polytopes such that every summand produces exactly one truncation of the permutohedron, i.e. yields to the appropriate facet of the resulting sum.
%the sum of a partial sum $S$ and any other summand in the presentation is combinatorially equivalent to the polytope which could be obtained from $S$ by truncating in its proper face. 
Additionally, it leads to the correlation between Minkowski sums and truncations, which gives a general procedure for similar geometrical realisation of a wider class of polytopes.

\vspace{.3cm}

\noindent {\small {\it Mathematics Subject Classification} ({\it
        2000}): 52B11, 52B12, 18D10}

\vspace{.5ex}

\noindent {\small {\it Keywords$\,$}: coherence, simple polytopes, geometrical realisation, Minkowski sum}
\end{abstract}

\maketitle

\section{introduction} \label{s_uvod}
A convex \emph{polytope} $P$ can be defined as a bounded intersection of a finitely many halfspaces. More precisely, it is a bounded solution set of a finite system of linear inequalities: 
\[
P=P(A,b):=\{x \in \mathbf{R}^n \mid \langle a_i,x\rangle\geqslant b_i,  \;   \; 1 \leqslant i \leqslant m\},
\]
where $A \in \mathbf{R}^{m \times n}$ is a real matrix with rows $a_i$, and $b\in \mathbf{R}^m$ is a real vector with entries $b_i$. Here, boundedness means that there is a constant $N$ such that $\Vert x \Vert \leqslant N$ holds for all $x \in P$. Also, convex polytope can be defined as a convex hull of a finite set of points in $\mathbf{R}^n$. Although equivalent (\cite[Theorem~1.1]{Z95}), these two definitions are essentially different from an algorithmic point of view. Through this paper, we use both. Since we consider only convex polytopes, we omit the word ``convex''.

The \emph{dimension} of a polytope $P$, denoted by $\dim(P)$, is the dimension of its affine hull. A polytope of dimension $d \leqslant n$ is written as \emph{$d$-polytope}. For a hyperplane $H$, the intersection $P\cap H$ is  called  a \emph{face}
of $P$ when $P$ lies in one of the halfspaces determined by $H$. If $P\cap H\neq \emptyset$, $H$ is a \emph{supporting hyperplane}.
We say that a face $F$ of $P$ is parallel to the given hyperplane $\pi$ when there is a hyperplane $H$ parallel to $\pi$ which defines $F$. Faces of dimensions 0, 1, and $d-1$ are called vertices, edges, and facets, respectively.
The sets of vertices and facets is denoted by $\mathcal{V}(P)$ and $\mathcal{F}(P)$, respectively.
A $d$-polytope is called
\textit{simple}, if each of its vertices belongs to exactly
$d$ facets (equivalently, to exactly
$d$ edges).
For the polytope $P=P(A,b)$, the halfspace defined by $i$th inequality $\langle a_i,x\rangle\geqslant b_i$ is called \emph{facet-defining}, when $\{x \in P \mid \langle a_i,x\rangle = b_i\}$ is a facet. Hence, $-a_i$, an \emph{outward normal vector} to that halfspace, is an outward normal vector to that facet.

For an equation that corresponds to the hyperplane $\pi$,
the \emph{halfspaces} $\pi^{\geqslant}$ and $\pi^{\leqslant}$ are defined as $\pi$, save that ``$=$'' is
replaced by ``$\geqslant$'' and ``$\leqslant$'', respectively.
For an arbitrary polytope $P$, $\pi^\geqslant$ is \emph{beneath} a vertex $V \in P$ when $V$ belongs to $\pi^>$ and also, we say that $\pi^\geqslant$ is \emph{beyond} $V$ when $V$ does not belong to $\pi^\geqslant$. A \textit{truncation} tr$_FP$ of $P$ in its proper face $F$ is a polytope $P \cap\pi^\geqslant$, where $\pi^\geqslant$ is  beneath every vertex not contained in $F$ and beyond every vertex contained in $F$. This truncation is \emph{parallel} when $F$ is parallel to $\pi$. In this paper, we assume that all truncations are in the faces that are not facets.

A simple polytope named
\textit{permutoassociahedron} belongs to the family that generalises a well known family of polytopes called \textit{nestohedra}, i.e.\ \textit{hypergraph polytopes}
%, which were widely investigated by many different authors
(see \cite{FMP94}, \cite{P09} or \cite{PRW08}).
%: Fulton and MacPherson \cite{FMP94}, De Concini and Procesi
%\cite{DeCP95}, Stasheff and Shnider \cite{S97}, Gaiffi
%\cite{Ga03} \cite{Ga04}, Feichtner and Kozlov \cite{FK04},
%Feichtner and Sturmfels \cite{FS05}, Carr and Devadoss
%\cite{CD06}, Postnikov, Reiner and Williams \cite{PRW08},
%Postnikov \cite{P09}, Do\v sen and Petri\' c \cite{DP10}, and many others.
%Nowadays,
%Remarkable properties of the nestohedra find a numerous applications in various fields of mathematics, especially in algebra, combinatorics, geometry, topology and logic.
Nestohedra appear in many fields of mathematics, especially in algebra, combinatorics, geometry, topology and logic.
Roughly speaking, we can understand this family as polytopes that can be obtained by truncations in the vertices, edges and other proper faces of $d$-\textit{simplex}. The recipe that prescribes which faces of simplex will be truncated can be defined with respect to a \textit{building set}, which is a special kind of a \textit{hypergraph} (see \cite{P09}).
Thus, we get simplices as the limit case in the family, when building set is minimal and where no truncation has been made. As the limit case at the other end, when building set is maximal and where all possible truncations have been made, we have \textit{permutohedra}. There are also other well-known members of this interval, but for needs of this work, beside permutohedron, the most important is an \textit{associahedron} or \textit{Stasheff polytope} (see~\cite{S63}).

The permutoassociahedron arises as a ``hybrid'' of these two nestohedra.
In order to bring the reader closer to our motivation to investigate this compound and have a clearer understanding of its nature and combinatorics, we recall of some combinatorial characteristics of its building elements. For more details on permutohedra and associahedra, we refer to \cite{Z95}, \cite{T06}, \cite{CD06} and \cite{S63}, \cite{BP15}, \cite{S97}, \cite{P09}, respectively.

Combinatorially, the permutohedron is a polytope whose vertices correspond to words obtained by all permutations on $n$ different letters. It can be realised by an $(n-1)$-polytope $\mathbf{P}_n$, whose vertices are obtained by permuting the coordinates of a given generic point in $\mathbf{R}^n$.
Thus, cardinality of the set $\mathcal{V}(\mathbf{P}_n)$ is $n!$. Two vertices are adjacent if and only if their corresponding permutations
%differ by the exchange of two consecutive coordinates,
can be obtained from one another by transposition of two consecutive coordinates, i.e.\ consecutive letters.
Figure~\ref{s:pn} depicts $\mathbf{P}_n$ for $n\in \{2,3,4\}$.
%\vspace{-1mm}
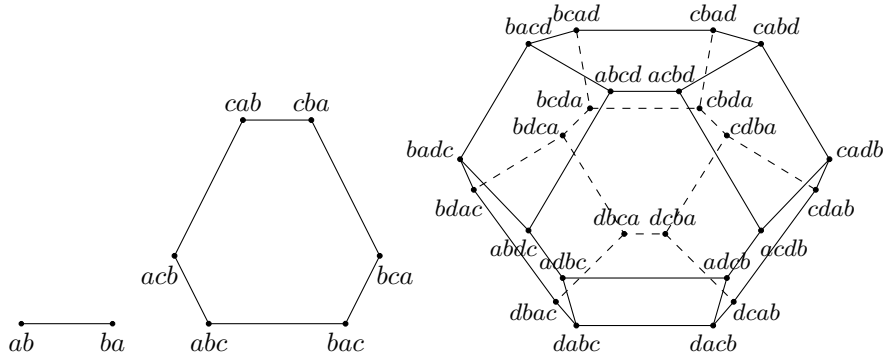
\begin{figure}[h!h!h!]
\begin{center}
\begin{tabular}{ccc}

\begin{tikzpicture}[scale=0.6]
%duz
\draw  (-1,0) node[below]  {$ab$}-- (1,0)node[below]  {$ba$};
\filldraw [black] (-1,0) circle (1.4pt)
                    (1,0) circle (1.4pt);
\end{tikzpicture}
&\hspace{-0.5cm}
\begin{tikzpicture} [scale=0.9]
%sestougao
  \draw
(-1,0) node[below]{$abc$} -- (1,0)node[below]{$bac$}  -- (1.5,1) node[below, xshift=0.2cm]{$bca$} -- (0.5,3) node[above]{$cba$}--(-0.5,3)node[above]{$cab$}--(-1.5,1) node [below, xshift=-0.2cm]{$acb$}-- cycle;
\filldraw [black] (-1,0) circle (1pt)
(1,0) circle (1pt)
(-0.5,3) circle (1pt)
(1.5,1) circle (1pt)
(0.5,3) circle (1pt)
(-1.5,1) circle (1pt)  ;
 \end{tikzpicture}
& \hspace{-0.8 cm}
\begin{tikzpicture} [scale=0.9]
%3D
  \draw  (-1,0) node[below]{\small {$dabc$}} -- (1,0)node[below]{\small {$dacb$}}  -- (1.2,0.7) node[above]{\small {$adcb$}} -- (-1.2,0.7) node[above]{\small {$adbc$}}--cycle;
  \filldraw [black] (-1,0) circle (1pt)
(1,0) circle (1pt)
(1.2,0.7) circle (1pt)
(-1.2,0.7) circle (1pt);

\draw  (1,0) -- (1.3,0.35) node[below, xshift=0.3cm, yshift=0.1cm]{\small {$dcab$}} -- (2.5,2) node[below, xshift=0.2 cm]{\small {$cdab$}}-- (2.7,2.45)node[above, xshift=0.4cm,yshift=-0.1cm]{\small {$cadb$}}-- (1.7,1.4) node[below, xshift=0.3 cm]{\small {$acdb$}} -- (1.2,0.7) ;
 \filldraw [black] (1.3,0.35) circle (1pt)
(2.5,2) circle (1pt)
(2.7,2.45) circle (1pt)
(1.7,1.4) circle (1pt);

 \draw  (-1,0) -- (-1.3,0.35) node[below, xshift=-0.3cm, yshift=0.1cm]{\small {$dbac$}} -- (-2.5,2) node[below, xshift=-0.2 cm]{\small {$bdac$}}-- (-2.7,2.45)node[above, xshift=-0.4cm, yshift=-0.1cm]{\small {$badc$}}-- (-1.7,1.4) node[below, xshift=-0.2 cm]{\small {$abdc$}} -- (-1.2,0.7) ;
 \filldraw [black] (-1.3,0.35) circle (1pt)
(-2.5,2) circle (1pt)
(-2.7,2.45) circle (1pt)
(-1.7,1.4) circle (1pt);

 \draw  (-1.7,1.4) -- (-0.5,3.45) node[above,xshift=0.1cm]{\small {$abcd$}} -- (0.5,3.45) node[above,xshift=-0.1cm]{\small {$acbd$}}-- (1.7,1.4) ;
 \filldraw [black] (-0.5,3.45) circle (1pt)
(0.5,3.45) circle (1pt);

\draw  (-0.5,3.45) -- (-1.7,4.15) node[above]{\small {$bacd$}} -- (-2.7,2.45) ;
 \filldraw [black] (-1.7,4.15) circle (1pt);

\draw  (0.5,3.45) -- (1.7,4.15) node[above,xshift=0.2cm]{\small {$cabd$}} --(1,4.35) node[above]{\small {$cbad$}} --(-1,4.35) node[above]{\small {$bcad$}}--(-1.7,4.15) ;
 \filldraw [black] (1.7,4.15) circle (1pt)
(1,4.35) circle (1pt)
(-1,4.35)circle (1pt);
\draw  (1.7,4.15) -- (2.7,2.45);

\draw[dashed] (-1,4.35) -- (-0.8,3.2)node[above,xshift=-0.4cm,yshift=-0.1cm] {\small{$bcda$}}--(0.8,3.2)node[above,xshift=0.4cm,,yshift=-0.1cm] {\small{$cbda$}}--(1,4.35);
\filldraw [black] (-0.8,3.2) circle (1pt)
(0.8,3.2) circle (1pt);

\draw[dashed] (-0.8,3.2) -- (-1.2,2.8)node[above,xshift=-0.35cm,yshift=-0.1cm] {\small{$bdca$}}--(-2.5,2);
\filldraw [black] (-1.2,2.8) circle (1pt);

\draw[dashed] (-1.2,2.8) -- (-0.3,1.35)node[above,xshift=-0.1cm] {\small{$dbca$}}--(0.3,1.35) node[above,xshift=0.1cm] {\small{$dcba$}}--(1.2,2.8) node[above,xshift=0.35cm,yshift=-0.1cm] {\small{$cdba$}}--(0.8,3.2);
\filldraw [black] (-0.3,1.35) circle (1pt)
(0.3,1.35) circle (1pt)
(1.2,2.8) circle (1pt);

\draw[dashed] (1.2,2.8) --(2.5,2) ;
\draw[dashed] (0.3,1.35) --(1.3,0.35) ;
\draw[dashed] (-0.3,1.35) --(-1.3,0.35) ;
\end{tikzpicture}
\end{tabular}
\end{center}
\caption{Permutohedron $\mathbf{P}_2$, $\mathbf{P}_3$ and $\mathbf{P}_4$} \label{s:pn}
\end{figure}

The associahedron $\mathbf{K}_n$ is an $(n-2)$-polytope whose vertices correspond to complete bracketings in a word of $n$ different letters. Hence, the total number of its vertices is the $(n-1)$th \textit{Catalan} number, i.e.\ cardinality of the set $\mathcal{V}(\mathbf{K}_n)$ is
 \[\frac{1}{n}\binom{2n-2}{n-1}.\]
Two vertices are adjacent if and only if they correspond to a single application of the associativity rule.  The $k$-faces
of the associahedron are in bijection with the set of correct bracketings of an $n$ letters  word with $n-k-1$ pairs of brackets. Two vertices lie in the same $k$-face if and
only if the corresponding complete bracketings could be reduced,
by removing $k$ pairs of brackets, to the same bracketing of the
word of $n$ letters with $n-k-1$ pairs of brackets. Figure~\ref{s:kn}
depicts $\mathbf{K}_n$ for $n \in \{3,4,5\}$.
\begin{figure}[h!h!h!]
\begin{center}
\begin{tabular}{ccc}
\begin{tikzpicture}[scale=0.6]
%duz
\draw  (-1,0) node[below]  {$(ab)c$}-- (1,0)node[below]  {$a(bc)$};
\filldraw [black] (-1,0) circle (1.4pt)
                    (1,0) circle (1.4pt);
\end{tikzpicture}
&\hspace{-0.85cm}
\begin{tikzpicture} [scale=0.9]
%petouao
  \draw
(-1,0) node[below]{$a((bc)d)$} -- (1,0)node[below]{$a(b(cd))$}  -- (1.3,1) node[above, xshift=0.6cm]{$(ab)(cd)$}--(0,3.2)node[above]{$((ab)c)d$}--(-1.3,1) node [above, xshift=-0.6cm]{$(a(bc))d$}-- cycle;
\filldraw [black] (-1,0) circle (1pt)
(1,0) circle (1pt)
(0,3.2) circle (1pt)
(1.3,1) circle (1pt)
(-1.3,1) circle (1pt)  ;
 \end{tikzpicture}
& \hspace{-0.75cm}
\begin{tikzpicture} [scale=1.5]
%3D
  \draw  (-0.75,0) node[below]{\small {$((a(bc))d)e$}} -- (1.3,0)node[below]{\small {$(a((bc)d))e$}}  -- (0.5,2.2) node[above, yshift=-0.1cm]{\small {$(a(b(cd)))e$}} -- (-0.5,2.2) node[above,xshift=-0.85cm,yshift=-0.25cm]{\small {$((ab)(cd))e$}}--(-1.05,0.5) node[below,xshift=-0.2cm]{\small {$(((ab)c)d)e$}}-- cycle;
  \filldraw [black] (-0.75,0) circle (0.75pt)
(1.3,0) circle (0.75pt)
(0.5,2.2) circle (0.75pt)
(-0.5,2.2) circle (0.75pt)
(-1.05,0.5) circle (0.75pt);

 \draw  (1.3,0) -- (1.7,0.3)node[below,xshift=-0.95cm, yshift=0.3cm]{\small {$a(((bc)d)e)$}}  -- (0.9,2.5) node[above, xshift=0.9 cm, yshift=-0.2 cm]{\small {$a((b(cd))e)$}} -- (0.5,2.2);
  \filldraw [black] (1.7,0.3) circle (0.75pt)
(0.9,2.5) circle (0.75pt)
(0.5,2.2) circle (0.75pt);

 \draw  (0.9,2.5) -- (0.7,2.7)node[above, xshift=0.5cm]{\small {$a(b((cd)e))$}}  -- (0.2,2.7) node[above,xshift=-0.5cm]{\small {$(ab)((cd)e)$}} -- (-0.5,2.2) ;
  \filldraw [black] (0.7,2.7) circle (0.75pt)
(0.2,2.7) circle (0.75pt);

\draw[dashed] (-0.75,0) -- (0.4,1)node[above,xshift=-0.4cm,yshift=-0.1cm] {\small{$(a(bc))(de)$}}--(1.1,1) node[above,xshift=0.6cm,yshift=-0.1cm] {\small{$a((bc)(de))$}}--(1.7,0.3);
\filldraw [black](0.4,1) circle (0.75pt)
(0.1,1.5) circle (0.75pt)
(1.1,1) circle (0.75pt);

\draw[dashed] (0.4,1) -- (0.1,1.5)node[below,xshift=-0.85cm,yshift=0.2cm] {\small{$((ab)c)(de)$}}--(-1.05,0.5);
\filldraw [black](0.4,1) circle (0.75pt);

\draw[dashed]  (0.1,1.5)--(0.3,2.1) node[below,xshift=-0.5cm] {\small{$(ab)(c(de))$}}--
(0.8,2.1)node[below,xshift=0.5cm] {\small{$a(b(c(de)))$}}--(1.1,1);
\filldraw [black](0.3,2.1) circle (0.75pt)
(0.8,2.1)circle (0.75pt);

\draw[dashed] (0.7,2.7) --(0.8,2.1) ;
\draw[dashed] (0.2,2.7) --(0.3,2.1) ;
\end{tikzpicture}
\end{tabular}
\end{center}
\caption{Associahedron $\mathbf{K}_3$, $\mathbf{K}_4$ and $\mathbf{K}_5$} \label{s:kn}
\end{figure}
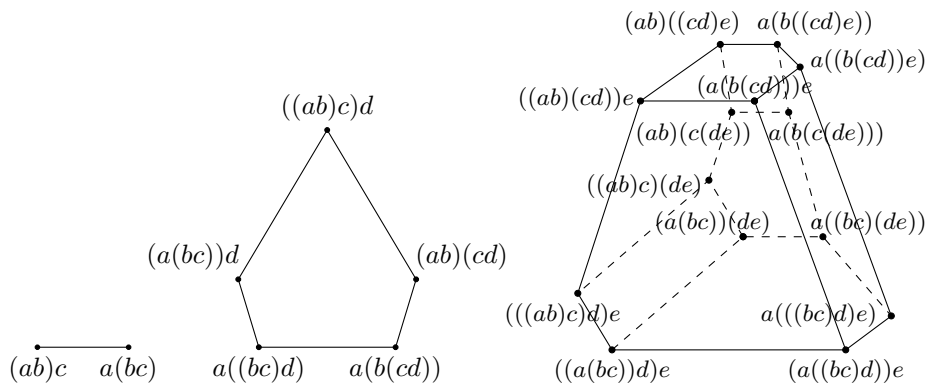

In early 1990s, Kapranov's original motivation for the study of $\mathbf{P}_n$ and $\mathbf{K}_n$ was provided by MacLane's coherence theorem for associativities and commutativities in monoidal categories \cite{ML63}.
He found a ``hybrid-polytope'' that demonstrates interaction between commutativity and associativity, named
permutoassociahedron and denoted by $\mathbf{KP}_n$. It is a polytope whose vertices correspond to all possible complete bracketings of permuted products of $n$ letters. Any $n$ objects in any symmetric (or braided) monoidal category give rise to a diagram of the shape $\mathbf{KP}_n$. He provided its realisation as a combinatorial \emph{CW-complex} and showed that it is an $(n - 1)$-ball. Furthermore, he realised $\mathbf{KP}_3$ and $\mathbf{KP}_4$ as convex polytopes (\cite{K93}). After Kapranov, Reiner and Ziegler gave such a realisation of $\mathbf{KP}_n$ for every $n\geqslant 2$~(\cite{RZ94}).
\begin{figure}[h!h!h!]
\begin{center}
\begin{tabular}{cc}
\begin{tikzpicture} [scale=1.2]
 \draw
(-0.8,0) node[below]{$(ab)c$} -- (0.8,0)node[below]{$(ba)c$}  -- (1.15,0.3) node[below, xshift=0.5cm, yshift=0.2cm]{$b(ac)$} -- (1.3,0.5)node[below, xshift=0.5cm,yshift=0.3cm]{$b(ca)$}--(1.25,0.9)node[below, xshift=0.55cm,yshift=0.2cm]{$(bc)a$}--(0.5,2.7)node[below, xshift=0.55cm,yshift=0.2cm]{$(cb)a$}--(0.2,3)node[above,xshift=0.2cm]{$c(ba)$}--(-0.2,3)node[above,xshift=-0.2cm]{$c(ab)$}--(-0.5,2.7)node[below, xshift=-0.55cm,yshift=0.2cm]{$(ca)b$}--(-1.25,0.9)node[below, xshift=-0.6cm,yshift=0.2cm]{$(ac)b$}--(-1.3,0.5)node[below,xshift=-0.5cm,yshift=0.3cm]{$a(cb)$}-- (-1.15,0.3)node[below,xshift=-0.5cm, yshift=0.2cm]{$a(bc)$}--cycle;

\filldraw [black] (-0.8,0) circle (0.7pt)
(0.8,0) circle (0.7pt)
(1.15,0.3) circle (0.7pt)
(-1.15,0.3) circle (0.7pt)
(1.3,0.5) circle (0.7pt)
(-1.3,0.5) circle (0.7pt)
(1.25,0.9) circle (0.7pt)
(-1.25,0.9) circle (0.7pt)
(0.5,2.7) circle (0.7pt)
(-0.5,2.7) circle (0.7pt)
(0.2,3) circle (0.7pt)
(-0.2,3) circle (0.7pt);
 \end{tikzpicture}
&
\begin{tikzpicture} [scale=1.2]
 \draw
(-0.8,0) node[below]{$(ab)c$} -- (0.8,0)node[below]{$(ac)b$}  -- (1.15,0.3) node[below, xshift=0.5cm, yshift=0.2cm]{$a(cb)$} -- (1.3,0.5)node[below, xshift=0.5cm,yshift=0.3cm]{$c(ab)$}--(1.25,0.9)node[below, xshift=0.55cm,yshift=0.2cm]{$(ca)b$}--(0.5,2.7)node[below, xshift=0.55cm,yshift=0.2cm]{$(cb)a$}--(0.2,3)node[above,xshift=0.2cm]{$c(ba)$}--(-0.2,3)node[above,xshift=-0.2cm]{$b(ca)$}--(-0.5,2.7)node[below, xshift=-0.55cm,yshift=0.2cm]{$(bc)a$}--(-1.25,0.9)node[below, xshift=-0.6cm,yshift=0.2cm]{$(ba)c$}--(-1.3,0.5)node[below,xshift=-0.5cm,yshift=0.3cm]{$b(ac)$}-- (-1.15,0.3)node[below,xshift=-0.5cm, yshift=0.2cm]{$a(bc)$}--cycle;

\filldraw [black] (-0.8,0) circle (0.7pt)
(0.8,0) circle (0.7pt)
(1.15,0.3) circle (0.7pt)
(-1.15,0.3) circle (0.7pt)
(1.3,0.5) circle (0.7pt)
(-1.3,0.5) circle (0.7pt)
(1.25,0.9) circle (0.7pt)
(-1.25,0.9) circle (0.7pt)
(0.5,2.7) circle (0.7pt)
(-0.5,2.7) circle (0.7pt)
(0.2,3) circle (0.7pt)
(-0.2,3) circle (0.7pt);
 \end{tikzpicture}
\end{tabular}
\end{center}
\caption{2-permutoassociahedron $\mathbf{KP}_3$ and $PA_2$} \label{s:kp3pa2}
\end{figure}
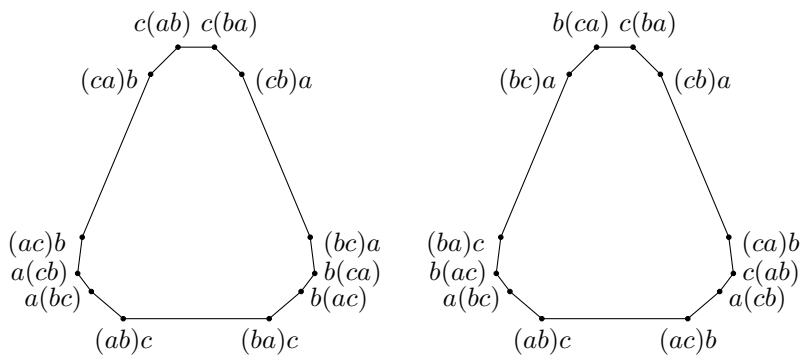

However, for every $n\geqslant 4$, Kapranov's  polytopes are not simple. Even in the case of 3-polytope $\mathbf{KP}_4$, we may notice some vertices that belong to more than three facets. Since these polytopes are hybrids of polytopes that are both simple, it
was natural to search for a family of simple permutoassociahedra.
It was firstly done by Petri\' c in \cite{P14}.
In that paper, he described the \emph{simplicial complex} $C$ obtained by a specific iterative nested construction, whose opposite face semilattice is isomorphic to the face lattice (with $\emptyset$ removed) of the simple $n$-polytope $PA_n$. This polytope is obtained by truncations of $n$-permutohedron such that every vertex expands into an $(n-1)$-associahedron.
Note that the vertices of $PA_n$ can be combinatorially given in the same way as the vertices of $\mathbf{KP}_{n+1}$, but $PA_{n}$ is simple in any dimension. The main difference in approach, which leads to the simplicity of the hybrid-polytope, is a choice of arrows that generate symmetry in a symmetric monoidal category. Namely, there are two types of the edges of $\mathbf{KP}_{n}$ corresponding either to a single reparenthesisation, or to a transposition of two adjacent letters that are grouped together. On the other hand, the edges of $PA_n$ are of the following two types: they also correspond to a single reparenthesisation, or to a transposition of two adjacent letters that are \textit{not} grouped together, i.e.\ to ``the most unexpected'' transposition of neighbours. This essential difference can be recognised even between $\mathbf{KP}_{3}$ and $PA_{2}$, which are both \emph{dodecagons} (see Figure~\ref{s:kp3pa2}).
The 3-dimensional members of these two families of permutoassociahedra are illustrated in Figure~\ref{s:kp4pa3}\footnote{The left illustration is taken from \cite[Section~9.3]{RZ94}, while the right one is made using the graphical algorithm-editor \textit{Grasshopper} (\cite{R18}), a plug-in for Rhinoceros 3D modelling package (\cite{N18}).}. There is a nonsimple vertex of $\mathbf{KP}_{4}$ that corresponds to the word $(bc)(ad)$, which is connected by the edges with the vertices that correspond to the words $(bc)(da)$, $((bc)a)d$, $b(c(ad))$ and $(cb)(ad)$. The vertex of $\mathbf{KP}_{3}$ that correspond to the same word is adjacent just to the three vertices that correspond to the words $((bc)a)d$, $b(c(ad))$ and $(ba)(cd)$.

\begin{figure}[h!h!h!]
\begin{center}
\includegraphics[width=0.77\textwidth]{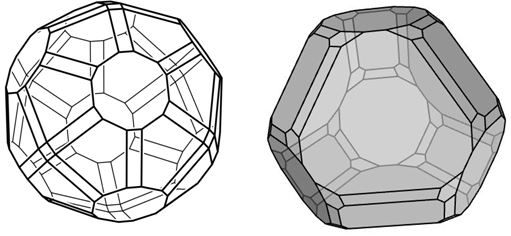}
\caption{3-permutoassociahedron $\mathbf{KP}_4$ and $PA_3$ } \label{s:kp4pa3}
\end{center}
\end{figure}

Based on \cite{P14}, the family of simple permutoassociahedra was further investigated by Curien, Ivanovi\' c and Obradovi\' c (\cite[Section~5.2]{CIO17})
and also by  Barali\' c, Ivanovi\' c and Petri\' c, who gave another explicit realisation with systems of inequalities representing halfspaces in $\mathbf{R}^{n+1}$. This realisation is denoted by $\mathbf{PA}_{n}$ in \cite{BIP17}.
In Section~2, we briefly present the simplicial complex $C$ (the face lattice of a simple permutoassociahedron given combinatorially) and its geometrical realisation $\mathbf{PA}_{n}$.

Since geometrical realisation of this family serves as a topological proof of Mac Lane's coherence, and since it is a generalisation of nestohedra defined by Postnikov as a Minkowski sum of standard simplices (\cite{PRW08}), it is natural to search for an alternative realisation of the simplicial complex $C$, which also uses Minkowski sum as a constructive tool.

Minkowski-decomposability of every simple polytope was confirmed by 
\linebreak Gr\"unbaum more than fifty years ago in \cite[Chapter~15.1, p.\ 321]{G67}, and therefore, decomposability of $\mathbf{PA}_n$ is guaranteed. However, we are interested in finding very specific decomposition (see Definition~\ref{d_Mink_realizacija} below) of its normal equivalent.
Besides Postnikov's representation of nestohedra, there is quite known family of \emph{zonohedra} (also called \emph{zonotopes}, \cite[Section~7.3]{Z95}), defined as Minkowski sum of line segments. There is no other specified representation of any significant family of polytopes, which uses Minkowski sums.
Therefore, the main goal of the papar is to define an $n$-dimensional Minkowski-realisation of the simplicial complex $C$ for every $n\geqslant 2$, according to Definition~\ref{d_Mink_realizacija}, i.e.\ to find an $n$-polytope in $\mathbf{R}^{n+1}$, denoted by $ PA_{n,1}$, which is combinatorially equivalent to $PA_n$ and obtained by Minkowski sums of particular polytopes. The summands are such that each of them leads to the appropriate facet of the whole sum, i.e.\ to a truncation of the currently obtained partial sum.

Before giving the general result, we investigate 2-dimensional Minkowski-realisation of $C$. Namely, in Section 4, we define a 2-polytope $^M PA_2$, normally equivalent to the polytopes $\mathbf{PA}_2$.
Then, in Section 5, for every $n\geqslant 2$, we specify a family of $n$-polytopes  $PA_{n,c}$ for $c\in(0,1]$ such that each member of the family is an $n$-dimensional Minkowski-realisation of $C$ and each one is normally equivalent to $\mathbf{PA}_n$. In particular, the most significant member of the family is $PA_{n,1}$, obtained for $c=1$, because all its summands are defined as convex hulls of points in $\mathbf{R}^{n+1}$. This is particularly beneficial with respect to a computational aspect.

An additional advantage of the new approach using Minkowski sums, is in constructing an algorithm for realisation of the other families of polytopes that also generalise nestohedra. Namely, this research leads to the clear correlation between Minkowski sums and truncations of permutohedron. This implicitly delivers a general procedure for geometrical Minkowski construction of any hybrid of permutohedron and arbitrary nestohedron (permutohedron-based-nestohedron) such that every summand produces exactly one truncation, i.e.\ yields to the appropriate facet of the resulting Minkowski sum.

Throughout the text cardinality of a set $X$ is denoted by $\lvert X \rvert$, $conv\{v_1,\ldots,v_k\}$ represents the convex hull of points $v_1,\ldots,v_k$, the dual space of a vector space $W$ is denoted by $W^\ast$, the set $\{1,\ldots,k\}$ is denoted by $[k]$, the subset relation is denoted by $\subseteq$, while the \textit{proper} subset relation is denoted by~$\subset$. Also, by \emph{comparability}, we mean the comparability with respect to inclusion.
 
\section{nested sets} \label{s_nested}
In this section, we present some known facts about a family of simplicial complexes and two-folded nested sets that are closely related to the  face lattice of $PA_n$.
Since the main goal of the paper is a new geometrical realisation, we omit the theory of nested set complexes in its full generality. We offer just a part of already established theory that is necessary for our research. The following expositions about complexes of nested sets for simplicial complexes and the definition of $\mathbf{PA}_n$ are inherited from \cite{FK04} and \cite{BIP17}, respectively.

\begin{dfn} \label{d_geometrijska_realizacija}\emph{(cf.\ {\cite[p.\ 7]{BIP17}})}
A polytope $P$ \emph{(geometrically) realises} a simplicial complex $K$, when the semilattice obtained by removing the bottom (the empty set) from the face lattice of $P$ is isomorphic to $(K,\supseteq)$.
\end{dfn}

\begin{dfn} \label{d_komb_ekvivalentni politopi}\emph{(cf.\ {\cite[p.\ 38]{G67}})}
Two polytopes $P$ and $Q$ are \emph{combinatorially equivalent}, when their face lattices are isomorphic and it is denoted by $P\sim Q$.
\end{dfn}

\begin{dfn} \label{d_bilding_skup}\emph{(cf.\ {\cite[Definition~3.1]{BIP17}})}
A collection $\mathcal{B}$ of non-empty subsets of a finite set
$V$ containing all singletons $\{v\}, v\in V$ and satisfying that
for any two sets $S_1, S_2\in \mathcal{B}$ such that $S_1\cap S_2
\neq\emptyset$, their union $S_1\cup S_2$ also belongs to
$\mathcal{B}$, is called a \emph{building set} of
$\mathcal{P}(V)$.
Let $K$ be a simplicial complex and let $V_1,\ldots,V_m$ be the maximal simplices of $K$. A collection $\mathcal{B}$ of some
simplices of $K$ is called a \emph{building set} of $K$, when for every
$i\in [m]$, the collection
$$\mathcal{B}_{V_i}=\mathcal{B} \cap \mathcal{P}(V_i)$$ is a
building set of $\mathcal{P}(V_i)$.
\end{dfn}

For a family of sets $N$, $\{X_1,\ldots,X_m\}\subseteq N$ is an $N$\textit{-antichain}, when $m \geqslant 2$ and $X_1,\ldots,X_m$ are mutually incomparable.

\begin{dfn}\label{d_nested_skup_u_odnosu_na_bilding_skup}\emph{(cf.\ {\cite[Definition~3.2]{BIP17}})}
Let $\mathcal{B}$ be a building set of a simplicial complex $K$.
We say that $N\subseteq \mathcal{B}$ is a \emph{nested set} with respect to $\mathcal{B}$, when the union of every $N$-antichain is
an element of $K-\mathcal{B}$.
\end{dfn}

A subset of a nested set is again a nested set. Hence, the nested sets form a simplicial complex.

Now, we proceed to the construction of the building set that gives rise to a simplicial complex of nested sets, which is associated to the simple permutoassociahedron. For $n\geqslant 1$, let $C_0$ be the
simplicial complex $\mathcal{P}\bigl([n+1])-\{[n+1]\bigr\}$, the family of subsets of $[n+1]$ with at most $n$ elements. The simplicial complex $C_0$ is known as the \emph{boundary complex} $\partial \Delta^n$ of
the abstract $n$-simplex $\Delta^n$.

\begin{rem}\label{r_postnikov_i_nas_nested}
The simplicial complex of all nested sets with respect to the building set $\mathcal{B}$ of $C_0$ is isomorphic to the simplicial complex obtained by the collection of all Postnikov's nested sets with removed maximal element from the building set. For more details, we refer to \cite[Section~3]{P14}.
\end{rem}

As a direct corollary of
%Theorem 7.4 in \cite{P09} and Theorem 1 in \cite{P14},
Proposition~9.10 in \cite{DP10},
we have the following claim.

\begin{prop}\label{p_relaizacijaCo}
For every building set $\mathcal{B}$ of $C_0$, there exists a nestohedron $P$ that realises the simplicial complex $K$ of all nested sets with respect to $\mathcal{B}$.
\end{prop}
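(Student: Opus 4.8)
The plan is to reduce Proposition~\ref{p_relaizacijaCo} to the cited Proposition~9.10 of \cite{DP10} by unwinding the dictionary between the two setups. First I would recall that $C_0=\partial\Delta^n$ is the boundary complex of the abstract $n$-simplex, whose maximal simplices are the $n$ subsets $V_i=[n+1]\setminus\{i\}$ for $i\in[n+1]$. By Definition~\ref{d_bilding_skup}, a building set $\mathcal{B}$ of $C_0$ is a collection of simplices of $C_0$ such that each restriction $\mathcal{B}_{V_i}=\mathcal{B}\cap\mathcal{P}(V_i)$ is a building set of the power set $\mathcal{P}(V_i)$ in the classical sense (contains all singletons, closed under union of intersecting members). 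I would then observe, via Remark~\ref{r_postnikov_i_nas_nested}, that such a $\mathcal{B}$ corresponds — on each facet $V_i$ of the simplex, and coherently across facets — exactly to the data of a building set on the ground set $[n+1]$ in the sense used by Postnikov and by \cite{DP10}, once one adjoins (or rather keeps track of) the top element $[n+1]$. The point is that $C_0$ differs from $\mathcal{P}([n+1])$ only by the removal of the single top face $[n+1]$, so a ``building set of $C_0$'' together with $[n+1]$ is a building set of $\mathcal{P}([n+1])$, i.e.\ precisely the combinatorial input to a nestohedron.

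Next I would state what \cite[Prop.~9.10]{DP10} gives: for a building set on a finite ground set, the abstract polytopal complex (or poset) of nested sets is realised by a simple polytope, the associated nestohedron (hypergraph polytope), obtained by the iterated-truncation construction on the simplex $\Delta^n$ according to $\mathcal{B}$. The remaining work is then purely bookkeeping: check that the simplicial complex $K$ of nested sets with respect to $\mathcal{B}$ in the sense of Definition~\ref{d_nested_skup_u_odnosu_na_bilding_skup} — where $N\subseteq\mathcal{B}$ is nested iff the union of every $N$-antichain lies in $C_0\setminus\mathcal{B}$ — matches, under the dictionary above, the nested-set complex of \cite{DP10} up to the face-lattice-reversal built into Definition~\ref{d_geometrijska_realizacija}. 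Concretely, I would verify that an inclusion-minimal nested set with many elements corresponds to a vertex of $P$, that comparability constraints translate as required, and that the ``$C_0\setminus\mathcal{B}$'' condition is exactly the condition preventing a nested set from filling up a whole facet of the simplex (which is what would collapse a face). Once the two complexes are identified, the polytope $P$ furnished by \cite{DP10} realises $K$ in the sense of Definition~\ref{d_geometrijska_realizacija}, and $P$ is a nestohedron by construction; this is the claim.

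The main obstacle I anticipate is not a deep mathematical difficulty but a notational one: \cite{DP10}, \cite{P14} and \cite{BIP17} each phrase ``building set'' and ``nested set'' slightly differently (power-set version vs.\ simplicial-complex version; with or without the maximal element; building sets of $\mathcal{P}(V)$ vs.\ of the complex), so the real content of the proof is setting up an unambiguous correspondence and checking it respects the order-reversal $(K,\supseteq)$ in Definition~\ref{d_geometrijska_realizacija}. A secondary subtlety is that Definition~\ref{d_nested_skup_u_odnosu_na_bilding_skup} demands antichains of size $\geqslant 2$ and uses $K-\mathcal{B}$ rather than $K$, so I would take care that a singleton $\{X\}\subseteq\mathcal{B}$ is automatically nested and that this matches the convention in \cite{DP10}; Remark~\ref{r_postnikov_i_nas_nested} already asserts the isomorphism with Postnikov's nested sets (with the maximal element removed from the building set), so I would lean on that remark to do most of the translation and then simply cite \cite[Prop.~9.10]{DP10} for the existence of the realising nestohedron. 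In short, the proof is a corollary: identify the data, invoke \cite{DP10}, and reconcile the order conventions.
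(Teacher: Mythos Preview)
Your proposal is correct and takes essentially the same approach as the paper, which simply records the proposition as ``a direct corollary of Proposition~9.10 in \cite{DP10}'' without further argument. Your write-up expands the notational bookkeeping that the paper leaves implicit (and flags via Remark~\ref{r_postnikov_i_nas_nested}), but the substance is identical: translate the building-set/nested-set conventions and invoke \cite{DP10}.
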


Such a nestohedron is introduced at the end of Section 3, where we present a polytope $P_\mathcal{B}$ whose semilattice obtained by removing the bottom from its face lattice is isomorphic to $(K,\supseteq)$. This contravariant isomorphism is obtained in such a way that the maximal nested sets correspond to the vertices of the polytope, while the minimal nested sets, i.e.\ the elements of $\mathcal{B}$, correspond to its facets. In general, we say the following.

\begin{dfn}\label{d_proplabel}
Let $P$ be a polytope that realises a simplicial complex $K$ of all nested sets with respect to the building set $\mathcal{B}$ and let $f$ be the contravariant isomorphism.
A facet $F$ of $P$ is \emph{properly labelled} by the element $B$ of $\mathcal{B}$ when $f(F)=\{B\}$.
%The facets of $P$ are \emph{properly labelled} if every facet $F$ is properly labelled by by the element of $\mathcal{B}$.
%Facets of $P$ are \emph{properly labelled} if they are labelled by the corresponding elements of $\mathcal{B}$,
In other words, two facets of $P$ have a common vertex if and only if there is a nested set containing both their labels.
\end{dfn}

Now, let $\mathcal{B}_0=C_0-\{\emptyset\}$. According to Definition~\ref{d_bilding_skup}, $\mathcal{B}_0$
is a building set of $C_0$.
A set $N\subseteq \mathcal{B}_0$ such that the union of every
$N$-antichain belongs to $C_0-\mathcal{B}_0$ is called
0-\emph{nested}. According to Definition~\ref{d_nested_skup_u_odnosu_na_bilding_skup}, every 0-nested
set is a nested set with respect to $\mathcal{B}_0$.
Since a subset of a 0-nested set is also a 0-nested
set, the family of all 0-nested sets makes a new  simplicial
complex $C_1$. Maximal 0-nested sets are
of the form
\[
\bigl\{ \{i_n,\ldots,i_1\},\ldots,\{i_n,i_{n-1}\},\{i_n\} \bigr\},
\]
where $i_1,\ldots,i_n$ are mutually distinct elements of $[n+1]$.

On the other hand, if we consider graph $\Gamma$ with $[n+1]$ as the set of vertices, the set of all members of $C_0$ that are non-empty and connected in $\Gamma$ make a (graphical) building set of $C_0$. Each of these building sets gives rise to a simplicial complex of nested sets, which can be realised as an $n$-nestohedron---a \emph{graph-associahedron} (\cite{CD06}). For example, $n$-permutohedron and $n$-associahedron, correspond to the \emph{complete graph} on $[n+1]$ and the \textit{path graph} $1-\ldots-(n+1)$, respectively.
By the definition of $\mathcal{B}_0$, the simplicial complex of nested sets corresponding to the complete graph on $[n+1]$ is exactly $C_1$, i.e.\ $n$-permutohedron realises $C_1$.

The maximal 0-nested sets correspond to the vertices of the permutohedron such that above-mentioned maximal 0-nested set is associated with the permutation
\[
i_{n+1} i_1\ldots i_n
\]
of $[n+1]$, where $\{i_{n+1}\}=[n+1]-\{i_1,\ldots,i_n\}$. It is easy to see that there is $(n+1)!$ maximal 0-nested sets.
The minimal nested sets of the form $\{B\}$ for $B \in \mathcal{B}_0$, correspond to the facets of the permutohedron. Therefore, two properly labelled facets have a common vertex if and only if their labels are comparable, according to Definition~\ref{d_proplabel}.

Observe that $\mathcal{B}_0$ was defined in a way that covers the recipe for completely truncated simplex, i.e.\ the permutohedron. One can conclude that the next logical step on the road to the permutoassociahedron is to truncate further in order to stretch the interval. Starting from the permutohedron with the recipe that corresponds to the associahedron, we need a new building set of $C_1$ according to a path graph.
Namely, for a maximal 0-nested set
\[
\bigl\{ \{i_n,\ldots,i_1\},\ldots,\{i_n,i_{n-1}\},\{i_n\} \bigr\},
\]
we observe the path graph with $n$ vertices and
$n-1$ edges
\[
\{i_n,\ldots,i_1\}-\ldots-\{i_n,i_{n-1}\}-\{i_n\}.
\]
A set of vertices of this graph is connected,
when this is the set of vertices of a connected subgraph of this graph.

Now, let $\mathcal{B}_1\subseteq C_1$ be the family of all sets of the
form
\[
\bigl\{
\{i_{k+l},\ldots,i_k,\ldots,i_1\},\ldots,\{i_{k+l},\ldots,i_k,i_{k-1}\},\{i_{k+l},\ldots,i_k\}
\bigr\},
\]
where $1\leqslant k\leqslant k+l\leqslant n$ and $i_1,\ldots,i_{k+l}$ are
mutually distinct elements of $[n+1]$, i.e.\ let $\mathcal{B}_1$ be the
set of all non-empty connected sets of vertices of the path graphs that correspond to all maximal 0-nested sets. By Definition~\ref{d_bilding_skup}, $\mathcal{B}_1$ is indeed a building set of the simplicial complex $C_1$.

A set $N\subseteq \mathcal{B}_1$ is 1-\emph{nested} when the union of every $N$-antichain belongs to $C_1-\mathcal{B}_1$. By Definition~\ref{d_nested_skup_u_odnosu_na_bilding_skup}, every 1-nested set is a nested set with
respect to $\mathcal{B}_1$. Again, one can verify that the family
of all 1-nested sets makes a simplicial complex, which is denoted by $C$. For a polytope $P$ that realises $C$, the maximal 1-nested sets correspond to the vertices of $P$, while the singleton 1-nested sets correspond to its facets.
Hence, from the definition of $\mathcal{B}_1$ and Definition~\ref{d_proplabel}, the next claim follows directly.

\begin{prop}\label{p_labele_odrelaizatoraC}
Let $P$ be a polytope that realises  $C$, whose facets are properly labelled by the elements of $\mathcal{B}_1$. Two facets of $P$ have a common vertex if and only if their labels are comparable or the union of their labels is in $C_1-\mathcal{B}_1$.
\end{prop}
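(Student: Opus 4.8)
The plan is to unwind the definition of \emph{1-nested} for a two-element subset of $\mathcal{B}_1$ and match it against Definition~\ref{d_proplabel}. Let $F_1,F_2$ be distinct facets of $P$ with proper labels $B_1,B_2\in\mathcal{B}_1$, so that $f(F_i)=\{B_i\}$; since $f$ is an isomorphism, distinct facets carry distinct labels. By Definition~\ref{d_proplabel}, $F_1$ and $F_2$ have a common vertex if and only if some nested set (with respect to $\mathcal{B}_1$) contains both $B_1$ and $B_2$. Because a subset of a nested set is again a nested set (so the nested sets form a simplicial complex), the existence of such a nested set is equivalent to the two-element set $\{B_1,B_2\}$ itself being 1-nested. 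Hence the proposition reduces to the purely combinatorial claim: $\{B_1,B_2\}$ is 1-nested if and only if $B_1,B_2$ are comparable or $B_1\cup B_2\in C_1-\mathcal{B}_1$.

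To prove that claim I would first note that $\{B_1,B_2\}\subseteq\mathcal{B}_1$ holds automatically, so by Definition~\ref{d_nested_skup_u_odnosu_na_bilding_skup} only the condition on $\{B_1,B_2\}$-antichains needs checking. The unique subset of $\{B_1,B_2\}$ with at least two elements is $\{B_1,B_2\}$ itself, and it is an $\{B_1,B_2\}$-antichain precisely when $B_1$ and $B_2$ are incomparable. Then I would split into two cases. If $B_1$ and $B_2$ are comparable, there is no $\{B_1,B_2\}$-antichain at all, so the defining condition is vacuously satisfied and $\{B_1,B_2\}$ is 1-nested. If $B_1$ and $B_2$ are incomparable, then $\{B_1,B_2\}$ is the only $\{B_1,B_2\}$-antichain, its union is $B_1\cup B_2$, and $\{B_1,B_2\}$ is 1-nested exactly when $B_1\cup B_2\in C_1-\mathcal{B}_1$. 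Combining the two cases yields exactly the asserted disjunction, and tracing the equivalences back gives the proposition.

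There is no substantial obstacle here: the argument is bookkeeping built on Definitions~\ref{d_nested_skup_u_odnosu_na_bilding_skup} and~\ref{d_proplabel} together with the already-recorded fact that nested sets form a simplicial complex. The only point that deserves care is the vacuous case — a comparable two-element subset of $\mathcal{B}_1$ has no antichain at all, hence is 1-nested regardless of where its union lies; this is what makes comparability a sufficient condition side by side with the union condition (and it is also why, although the ``or'' in the statement is inclusive, the two alternatives are in fact mutually exclusive, since $B_1\cup B_2\in\mathcal{B}_1$ whenever $B_1,B_2$ are comparable).
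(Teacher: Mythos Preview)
Your proof is correct and follows exactly the approach the paper intends: the paper states that the proposition ``follows directly'' from the definition of $\mathcal{B}_1$ and Definition~\ref{d_proplabel}, and your argument is precisely the careful unpacking of those definitions together with Definition~\ref{d_nested_skup_u_odnosu_na_bilding_skup}. There is nothing to add.
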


According to \cite{BIP17}, a geometrical realisation of $C$ is given as follows. For
\linebreak$1\leqslant k\leqslant k+l\leqslant n$, let
\[
\kappa(k,l)=\frac{3^{k+l+1}-3^{l+1}}{2}+\frac{3^k-3k}{3^n-n-1}.
\]
For an element 
$
\beta=\bigl\{\{i_{k+l},\ldots,i_k,\ldots,i_1\},\ldots,\{i_{k+l},\ldots,i_k,i_{k-1}\},\{i_{k+l},\ldots,i_k\}
\bigr\}$,
of $\mathcal{B}_1$, let $\pi_\beta$ be the equation
(hyperplane in $\mathbf{R}^{n+1}$)
\[
x_{i_1}+2x_{i_2}+\ldots+k(x_{i_k}+\ldots+x_{i_{k+l}})=\kappa(k,l).
\]
\begin{flushleft}
For $\pi$ being the hyperplane $x_1+\ldots +x_{n+1}=3^{n+1}$ in
$\mathbf{R}^{n+1}$, let
\[
\mathbf{PA}_n=(\bigcap \{{\pi_\beta}^{\geqslant}\mid \beta\in \mathcal{B}_1\})\cap
\pi.
\]
\end{flushleft}

\begin{thm}\label{t_realizacijaZoran}\emph{(cf.\ {\cite[Theorem~5.2]{BIP17}})}
$\mathbf{PA}_n \subseteq \mathbf{R}^{n+1}$ is a simple $n$-polytope that realises $C$.
\end{thm}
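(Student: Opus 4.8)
The plan is to verify three things in turn: that $\mathbf{PA}_n$ is a genuine $n$-polytope (bounded, full-dimensional inside $\pi$), that every hyperplane $\pi_\beta$ actually contributes a facet, and that the incidence combinatorics of these facets matches the nested set complex $C$ via Proposition~\ref{p_labele_odrelaizatoraC}, which also yields simplicity. The underlying geometric idea is that $\mathbf{PA}_n$ should arise from the $n$-permutohedron (the realisation of $C_1$ sitting in $\pi$) by performing, for each element $\beta\in\mathcal{B}_1$ that is \emph{not} a singleton 0-nested facet label, a parallel truncation of the corresponding face. So the first step is to exhibit the vertices of the standard permutohedron in $\pi$ — the points whose coordinates are a permutation of suitable values — and check that the constraints $\pi_\beta^{\geqslant}$ coming from connected sets of size one (the facets already present) are tight exactly at the permutohedron's facets, so that $\mathbf{PA}_n$ is obtained from $\mathbf{P}_{n+1}\cap\pi$ by intersecting with the remaining halfspaces; boundedness is then automatic.

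Next I would identify, for each $\beta=\{\{i_{k+l},\ldots,i_1\},\ldots,\{i_{k+l},\ldots,i_k\}\}\in\mathcal{B}_1$ of ``length $l$'' (so the associated path subgraph has $l+1$ vertices), the face $F_\beta$ of the permutohedron that the hyperplane $\pi_\beta$ is meant to cut. The key computation is the choice of the constant $\kappa(k,l)$: one must show that with this value, $\pi_\beta^{\geqslant}$ is \emph{beyond} precisely those permutohedron vertices whose maximal $0$-nested set contains all of $\{i_{k+l},\ldots,i_k\},\ldots,\{i_{k+l},\ldots,i_1\}$ as a chain — i.e. the permutations in which $i_1,\ldots,i_{k+l}$ appear consecutively in that linear order starting from position $k$ — and \emph{beneath} every other vertex. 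Concretely one evaluates the linear form $\ell_\beta(x)=x_{i_1}+2x_{i_2}+\cdots+k(x_{i_k}+\cdots+x_{i_{k+l}})$ at a permutohedron vertex and checks that its minimum over all vertices in $F_\beta$ is strictly larger than its maximum over the vertices not in $F_\beta$, and that $\kappa(k,l)$ lies strictly between these two numbers; the split $\frac{3^{k+l+1}-3^{l+1}}{2}+\frac{3^k-3k}{3^n-n-1}$ is engineered so that the first summand handles the ``bulk'' separation while the small second summand breaks ties uniformly across all $\beta$ of the same ``size parameters'', guaranteeing that no two truncating hyperplanes coincide or over-cut. This is the step I expect to be the main obstacle: it is a delicate simultaneous estimate over all $\beta\in\mathcal{B}_1$, and one must confirm not only that each truncation is valid in isolation but that performing all of them in any order (they are independent because the cut faces are pairwise either nested or disjoint in the permutohedron) produces no accidental lower-dimensional faces — i.e. that the resulting arrangement stays simple.

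With the truncations validated, the combinatorial identification follows the pattern already set up in the paper. Each facet of $\mathbf{PA}_n$ is the zero-set of exactly one $\pi_\beta$, so we label it by $\beta$; I would check this labelling is well-defined (distinct $\beta$ give distinct facets, using the strict separation above) and that it is \emph{proper} in the sense of Definition~\ref{d_proplabel}. Then the vertices of $\mathbf{PA}_n$ biject with the maximal $1$-nested sets: a vertex lies on facets labelled by a family $N\subseteq\mathcal{B}_1$, and by the truncation picture $N$ is a maximal collection of elements of $\mathcal{B}_1$ that is ``nested'' — any antichain in $N$ has its union outside $\mathcal{B}_1$, which is exactly the condition defining $C$. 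Two facets share a vertex iff their labels are comparable or their union lies in $C_1-\mathcal{B}_1$, matching Proposition~\ref{p_labele_odrelaizatoraC}; this is read off directly from how consecutive truncations of the permutohedron interact. Finally, simplicity: a vertex of $\mathbf{PA}_n$ created by truncating the permutohedron at a face of codimension $j$ through $j$ successive cuts lies on exactly $n$ facets because the permutohedron is already simple and parallel truncations preserve simplicity, which also pins down $\dim\mathbf{PA}_n=n$ since every vertex is simple and the face lattice is that of $C$. Assembling these gives that $\mathbf{PA}_n$ is a simple $n$-polytope realising $C$.
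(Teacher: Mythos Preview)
The paper does not prove this theorem at all: it is quoted verbatim from \cite[Theorem~5.2]{BIP17} and used as a black box throughout the rest of the paper. So there is no ``paper's own proof'' to compare your proposal against; your outline is effectively a sketch of what the argument in \cite{BIP17} must accomplish.

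As an outline your plan is broadly on the right track---start from the permutohedron inside $\pi$, show that each $\pi_\beta^{\geqslant}$ with $|\beta|>1$ effects a genuine truncation of the corresponding face, and then read off the face lattice---but a couple of points are too optimistic. First, the faces $F_\beta$ of the permutohedron being truncated are \emph{not} ``pairwise either nested or disjoint'': two elements $\beta,\beta'\in\mathcal{B}_1$ can label faces that share vertices without one containing the other (this happens whenever $\{\beta,\beta'\}$ is a 1-nested set with $\beta,\beta'$ incomparable), so the independence of truncations cannot be argued that way. The real work in \cite{BIP17} is precisely the simultaneous inequality analysis you flag as the main obstacle, and it requires tracking how the already-performed truncations have moved the vertices before the next $\pi_\beta$ cuts; the specific form of $\kappa(k,l)$ is tuned to make this bookkeeping go through. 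Second, ``parallel truncations preserve simplicity'' is true for a single truncation of a simple polytope in a face, but is not automatic for a whole sequence of truncations unless one checks that each new hyperplane is in general position relative to the facets already present---which again is exactly what the choice of $\kappa(k,l)$ is meant to guarantee and must be verified, not assumed.
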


As a consequence of the previous theorem, Proposition~\ref{p_labele_odrelaizatoraC} and Lemma~5.5 in \cite{BIP17}, we have the following.
\begin{cor} \label{c_labele_odPA_n}
For every $\beta \in \mathcal{B}_1$, the halfspace ${\pi_\beta}^{\geqslant}$ is facet-defining for $\mathbf{PA}_n$. Moreover, if the facets of $\mathbf{PA}_n$ are properly labelled, then the facet
$
\mathbf{PA}_n \cap {\pi_\beta}^{\geqslant}
$
is labelled by $\beta$.
\end{cor}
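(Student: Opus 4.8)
The plan is to transport the purely combinatorial information of Theorem~\ref{t_realizacijaZoran} back into the explicit presentation $\mathbf{PA}_n=\pi\cap\bigcap_{\beta\in\mathcal{B}_1}{\pi_\beta}^{\geqslant}$. Since $\mathbf{PA}_n$ realises $C$, the contravariant isomorphism $f$ of Definition~\ref{d_proplabel} carries the facets of $\mathbf{PA}_n$ bijectively onto the one-element members $\{\beta\}$, $\beta\in\mathcal{B}_1$, of $C$ (these are exactly the elements of $C$ covered by the top $\emptyset$ of $(C,\supseteq)$, since a one-element subset of $\mathcal{B}_1$ admits no $N$-antichain and is therefore $1$-nested). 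Hence $\mathbf{PA}_n$ has precisely $\lvert\mathcal{B}_1\rvert$ facets, and unwinding $f$ shows that a facet $F$ is properly labelled by $\beta$ if and only if $\mathcal{V}(F)$ is exactly the set of vertices of $\mathbf{PA}_n$ whose associated maximal $1$-nested set contains $\beta$.

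Now for the first assertion. As a bounded $n$-polytope, $\mathbf{PA}_n$ is cut out inside the affine hyperplane $\pi\cong\mathbf{R}^n$ by the $\lvert\mathcal{B}_1\rvert$ halfspaces ${\pi_\beta}^{\geqslant}\cap\pi$, and the hyperplanes $\pi_\beta$ are pairwise distinct --- immediate from the coefficients of their defining equations, and in any case forced, since coinciding hyperplanes would present $\mathbf{PA}_n$ by strictly fewer than $\lvert\mathcal{B}_1\rvert$ halfspaces and hence give it strictly fewer than $\lvert\mathcal{B}_1\rvert$ facets. A bounded intersection of $m$ pairwise distinct halfspaces has at most $m$ facets, with equality only if all $m$ halfspaces are facet-defining and cut out pairwise distinct facets. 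Since $\mathbf{PA}_n$ has exactly $\lvert\mathcal{B}_1\rvert$ facets, each ${\pi_\beta}^{\geqslant}$ is facet-defining and $\beta\mapsto G_\beta:=\mathbf{PA}_n\cap{\pi_\beta}^{\geqslant}$ is a bijection from $\mathcal{B}_1$ onto $\mathcal{F}(\mathbf{PA}_n)$.

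For the second assertion, the previous step gives a permutation $\rho$ of $\mathcal{B}_1$ with $G_\beta$ properly labelled by $\rho(\beta)$, and it remains to show $\rho=\mathrm{id}$. Here Lemma~5.5 of \cite{BIP17} supplies exactly the missing link: it identifies, for each $\beta\in\mathcal{B}_1$, the vertices of $\mathbf{PA}_n$ lying on the hyperplane $\pi_\beta$ with those whose associated maximal $1$-nested set contains $\beta$; combined with the characterisation of proper labelling from the first paragraph, this says precisely that $\mathcal{V}(G_\beta)$ is that set, i.e.\ $\rho(\beta)=\beta$. If one prefers to argue only through adjacencies, Lemma~5.5 also pins down when $G_\beta\cap G_\gamma$ is a face of $\mathbf{PA}_n$ --- namely when $\beta,\gamma$ are comparable or $\beta\cup\gamma\in C_1-\mathcal{B}_1$ --- while Proposition~\ref{p_labele_odrelaizatoraC} says this happens iff $\rho(\beta),\rho(\gamma)$ are comparable or $\rho(\beta)\cup\rho(\gamma)\in C_1-\mathcal{B}_1$; matching the two relations, together with the vertex description, again forces $\rho$ to be the identity.

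The main obstacle is precisely this interface with Lemma~5.5 of \cite{BIP17}: one needs, at the level of the generic permuted coordinates used to define $\mathbf{PA}_n$ and the constants $\kappa(k,l)$, the exact fact that a vertex of $\mathbf{PA}_n$ lies on $\pi_\beta$ if and only if $\beta$ belongs to the maximal $1$-nested set indexing that vertex --- a direct but mildly technical computation. Everything else --- the face counting, the identification of facets with singleton nested sets, and the adjacency comparison --- is routine bookkeeping with the realisation isomorphism.
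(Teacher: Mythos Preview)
Your argument is correct and follows essentially the same approach as the paper, which simply records the corollary as a direct consequence of Theorem~\ref{t_realizacijaZoran}, Proposition~\ref{p_labele_odrelaizatoraC} and Lemma~5.5 of \cite{BIP17}. You have unpacked these three inputs in detail: the counting argument that $\lvert\mathcal{B}_1\rvert$ pairwise distinct halfspaces cutting out a polytope with $\lvert\mathcal{B}_1\rvert$ facets must all be facet-defining is a clean way to get the first assertion, and your use of Lemma~5.5 to pin down $\rho=\mathrm{id}$ is exactly what the paper intends.
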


\section{minkowski sum, normal cones and fans}\label{s_minkowski}
Before we define our main task related to the last theorem, let us recall some facts about normal cones and fans, and also about Minkowski sum, which is one of the fundamental operation on point sets. %Mostly part of the section is based on \cite[Section 4 and 5]{B08},\cite[Section ~7.1]{Z95} and \cite[Section ~15.1]{G67}.
The collection of all polytopes in $\mathbf{R}^n$ is denoted by $\mathcal{M}_n$ (following \cite{B08}).

\begin{dfn}\label{d_suportfunction}
\emph{(cf.\ {\cite[p.\ 36]{B08}})}
The \emph{supporting function} of $P\in \mathcal{M}_n$ is the function
\[s_P:\mathbf{R}^n\longrightarrow \mathbf{R}:s_P(x)=\max\limits_{y\in P}^{} \langle x,y\rangle.
\]
\end{dfn}

For every face $F$ of a polytope $P$, there is a supporting hyperplane through $F$. The set of outward normals to all such hyperplanes spans a polyhedral cone, the normal cone at $F$ (see Figure~\ref{s:skica_fanovi}). More formal definition follows.

\begin{dfn}\label{d_normalconefan}
\emph{(cf.\ {\cite[p.\ 193]{{Z95}}})}
For a given face $F$ of a $d$-polytope $P\in \mathcal{M}_n $, the \emph{normal cone} to $P$ at $F$ is the  collection of linear functionals $v$ in $(\mathbf{R}^d)^\ast$, whose maximum on $P$ is achieved on all the points in the face $F$, i.e.
\[
N_F(P)=\{v\in (\mathbf{R}^d)^\ast \mid  \langle v,y\rangle = s_P(v), \;  \forall y\in F\}.
\]
1-dimensional normal cones are called \emph{rays}.
The \emph{normal fan} of $P$ is the collection
\[
\mathcal{N}(P)=\{N_F(P) \mid F \emph{\text{ is a non-empty face of }} P\}.
\]
\end{dfn}
The normal fan $\mathcal{N}(P)$ is \emph{complete} for every $P \in \mathcal{M}_n$, which means that the union of all normal cones in $\mathcal{N}(P)$ is $\mathbf{R}^n$. 
As we only consider normal fans, the word ``normal'' will be assumed and omitted for brevity from now on. Also, an arbitrary convex cone in $\mathbf{R^{n}}$ of dimension $d \leqslant n$ is written as $d$-cone.
\begin{rem}\label{r_preskmaksimalnihnijemaksimalan}
The intersection of any two normal cones in $\mathcal{N}(P)$ at two faces $F_1$ and $F_2$ is the common face for each of the cones, which is also the normal cone at the smallest face of $P$ that contains both $F_1$ and $F_2$.  
\end{rem}

\begin{exm}\label{e_fanduzi}
The fan of a single line segment $L$ is the set $\{H,H^{\geqslant},H^{\leqslant}\}$, where $H$ is a hyperplane normal to $L$.
\end{exm}

\begin{dfn} \label{d_norm_ekvivalentni politopi}
\emph{(cf.\ {\cite[p.\ 193]{{Z95}}})}
Two polytopes $P,Q \in \mathcal{M}_n$ are called \emph{normally equivalent} when they have the same fan:
\[
P\simeq Q \Leftrightarrow \mathcal{N}(P) = \mathcal{N}(Q).
\]
\end{dfn}
\begin{flushleft}
In literature, normally equivalent polytopes are also called ``analogous'', ``strongly
isomorphic'' or ``related''. The term ``normally equivalent'' is used in \cite{G67} and \cite{Z95}. An example of two normally equivalent polytopes is given in Figure~\ref{s:javaview}.
\end{flushleft}

One can verify that $P\simeq Q \Rightarrow P\sim Q$, but the other direction does not hold. If $Q$ can be obtained from $P$ by parallel translations of the facets, then the outward normals to the corresponding facets of $P$ and $Q$ have the same directions, and then, the rays in $\mathcal{N}(Q)$ and $\mathcal{N}(P)$ coincide. Therefore, the next proposition holds.

\begin{prop} \label{p_paralelni_feseti}
Two combinatorially equivalent polytopes are normally equivalent if and only if their corresponding facets are parallel.
\end{prop}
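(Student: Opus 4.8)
The plan is to prove both implications from a single standard description of normal cones: for any polytope $P$ and any non-empty face $G$, the normal cone $N_G(P)$ is the cone generated by the outward normal vectors of those facets of $P$ that contain $G$. This is immediate from Definition~\ref{d_normalconefan}: writing $P$ by its facet-defining inequalities, a linear functional is maximised over $P$ along $G$ precisely when it lies in the conical hull of the outward normals of the inequalities active on $G$, and those inequalities are exactly the ones defining the facets containing $G$. So the first step is to record this fact.

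Next I would prove the implication ``corresponding facets parallel $\Rightarrow$ normally equivalent''. Let $\phi$ be a combinatorial isomorphism from $P$ to $Q$ with $\phi(F)$ parallel to $F$ for every facet $F$. For a facet, the outward normal \emph{direction} is intrinsic --- it is the direction, normal to the facet's supporting hyperplane inside the direction space of the affine hull of the polytope, that points away from the polytope --- so a pair of parallel facets $F$, $\phi(F)$ has one common outward normal direction, say $u_F$. Fix a face $G$ of $P$. Since $\phi$ is a face-lattice isomorphism, it maps the facets of $P$ containing $G$ bijectively onto the facets of $Q$ containing $\phi(G)$; hence by the first step $N_G(P)$ and $N_{\phi(G)}(Q)$ are generated by the same set $\{u_F : G\subseteq F\}$, so they coincide. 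As $\phi$ is onto on faces, $\mathcal{N}(P)=\mathcal{N}(Q)$, that is, $P\simeq Q$.

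For the converse, suppose $\mathcal{N}(P)=\mathcal{N}(Q)$. Because the dimension of $N_G(P)$ is complementary to that of $G$, the rays of this common fan are exactly the normal cones at facets; matching every cone of the fan to the face of $P$ and to the face of $Q$ it comes from produces a combinatorial isomorphism $\psi:P\to Q$ whose restriction to facets sends $F$ to the facet $\psi(F)$ of $Q$ with $N_{\psi(F)}(Q)=N_F(P)$. The outward normal of $F$ and the outward normal of $\psi(F)$ both span this common ray and both point away from their respective polytopes, hence point in the same direction; therefore the supporting hyperplanes of $F$ and $\psi(F)$ are parallel, i.e.\ $F$ and $\psi(F)$ are parallel facets, which is the conclusion.

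The genuinely routine parts are the opening fact about normal cones and the dimension count identifying rays with facets. The one subtlety to flag is orientation together with the choice of combinatorial equivalence: ``parallel'' for facets of two polytopes must be read with the outward normals oriented consistently, which is automatic because each polytope fixes its own outward orientation; and in the converse it is the combinatorial equivalence $\psi$ \emph{induced by the common fan} that exhibits the parallelism, so the statement is naturally understood up to composing with a combinatorial automorphism of $P$.
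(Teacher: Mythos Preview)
Your proof is correct and follows the same idea as the paper's brief justification preceding the proposition: the rays of the normal fan are exactly the outward normal directions of the facets, so coincidence of the facet normal directions (together with the combinatorial equivalence) forces the fans to coincide, and conversely. The paper compresses this into one sentence (``the rays in $\mathcal{N}(Q)$ and $\mathcal{N}(P)$ coincide. Therefore, the next proposition holds''), whereas you spell out both directions and correctly flag the orientation subtlety; your version is simply a more careful execution of the same argument.
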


\begin{rem}\label{r_correspondingfacettruncation}
If $P$ is a polytope in $\mathcal{M}_n$ defined as the intersection of the following $m$ facet-defining halfspaces 
\[
\langle a_i,x\rangle\geqslant b_i,  \;   \; 1\leqslant i\leqslant m,
\]
then a truncation of $P$ in its proper face $F$, \emph{tr}$_FP=P\cap \pi^{\geqslant}$, is the intersection of the following $m+1$ facet-defining halfspaces 
\[
\langle a_i,x\rangle\geqslant b_i,  \;   \; 0\leqslant i\leqslant m,
\]
where $\langle a_0,x\rangle\geqslant b_0$ defines the halfspace $\pi^{\geqslant}$. 

The previous proposition implies the following. For every polytope $Q$ which is normally equivalent to $\emph{tr}_FP$, there exists $c \in \mathbf{R}^{m+1}$ with entries $c_i$ such that $Q$ is the intersection of the following $m+1$ facet-defining halfspaces 
\[
\langle a_i,x\rangle\geqslant c_i,  \;   \; 0\leqslant i\leqslant m.
\]
Hence, if $f$ is a facet of $Q$ lying in the hyperplane $\langle a_0,x\rangle= c_0$ parallel to $\pi$, then there is a bijection $$\mu:\mathcal{F}(Q)-\{f\}\rightarrow \mathcal{F}(P)$$ mapping facets to parallel facets.
We say that facets of polytopes $P$ and $Q$ correspond to each other when they correspond according to $\mu$.
Also, $f$ is called the new appeared facet of $Q$.
\end{rem}

\begin{lem}\label{l_konusiparalelnetrunkacije}
Let \emph{tr}$_FP=P \cap \pi^{\geqslant}$ be a truncation of a given polytope $P \in \mathcal{M}_n$ in its face $F$. For a vertex $u\in F$, let $\{w_i\mid i \in [k]\}$ be the set of vertices of $P$ adjacent to $u$ but not contained in $F$. Also, for every $i \in [k]$, let $E_i=\overline{uw_i}$ and $v_i= E_i\cap \pi$. 
%The following claims hold. 
%\begin{enumerate}
%\item [\emph{(i)}] For every $i \in [k]$, $N_{u}(P)\cap N_{w_i}(P)$ is a face of $N_{v_i}(\emph{tr}_FP)$. 
%\item [\emph{(ii)}] 
The union of all normal cones in $\mathcal{N}(\emph{tr}_FP)$ at the vertices contained in $\pi$ is equal to the union of all normal cones in $\mathcal{N}(P)$ at the vertices contained in $F$. Moreover, if the truncation is parallel, then $$ N_{u}(P)=\bigcup\limits_{i \in [k]}N_{v_i}(\emph{tr}_FP).$$
%\end{enumerate}  
\end{lem}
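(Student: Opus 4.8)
The plan is to rephrase both statements in terms of maximising faces. For a direction $c$ write $P^{c}=\{y\in P\mid\langle c,y\rangle=s_{P}(c)\}$, and, setting $Q:=\mathrm{tr}_{F}P=P\cap\pi^{\geqslant}$ with $\pi^{\geqslant}=\{x\mid\langle a_{0},x\rangle\geqslant b_{0}\}$, write $Q^{c}$ similarly; the new facet $f:=Q\cap\pi$ then has outward normal $-a_{0}$, and I put $\rho:=\mathbf{R}_{\geqslant0}(-a_{0})=N_{f}(Q)$. First I would record, directly from the definition of a truncation, that $\mathcal{V}(F)=\mathcal{V}(P)\cap\pi^{<}$ and $\mathcal{V}(P)\setminus\mathcal{V}(F)=\mathcal{V}(P)\cap\pi^{>}$, so no vertex of $P$ lies on $\pi$; hence $\mathcal{V}(Q)\cap\pi=\mathcal{V}(f)$, and an inspection of tangent cones shows that $E\mapsto E\cap\pi$ is a bijection from the set of edges of $P$ with one endpoint in $F$ and the other not in $F$ onto $\mathcal{V}(f)$. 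Since a face of $P$ is the convex hull of its vertices and $P^{c}\cap F$ is a face of $P$, we have $c\in\bigcup_{u'\in\mathcal{V}(F)}N_{u'}(P)$ exactly when $P^{c}\cap F\neq\emptyset$; likewise $c\in\bigcup_{v\in\mathcal{V}(f)}N_{v}(Q)$ exactly when $Q^{c}\cap\pi\neq\emptyset$ (the latter is a face of $Q$, hence non-empty iff it contains a vertex of $f$). Thus the first assertion amounts to the equivalence $Q^{c}\cap\pi\neq\emptyset\iff P^{c}\cap F\neq\emptyset$, valid for every $c$.

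To prove this equivalence I would split on whether $\pi$ removes all of $P^{c}$. If $P^{c}\subseteq F$, then $P^{c}\subseteq\pi^{<}$ is disjoint from $Q$, so $s_{Q}(c)<s_{P}(c)$; were $Q^{c}$ disjoint from $\pi$ it would lie in the open halfspace $\pi^{>}$, where $Q$ and $P$ coincide near any relative interior point of $Q^{c}$, making such a point a local and therefore, by linearity of $\langle c,\cdot\rangle$, global maximiser on $P$ — contradicting $s_{Q}(c)<s_{P}(c)$. Hence $Q^{c}\cap\pi\neq\emptyset$, while also $P^{c}\cap F=P^{c}\neq\emptyset$, so both sides hold. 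If $P^{c}\not\subseteq F$, then $P^{c}$ has a vertex $z\notin F$; it lies in $\pi^{\geqslant}$, hence in $Q$, which forces $s_{Q}(c)=s_{P}(c)$ and $Q^{c}=P^{c}\cap\pi^{\geqslant}$; then $Q^{c}\cap\pi=P^{c}\cap\pi$ is non-empty precisely when $P^{c}$ has a vertex in $\pi^{<}$ as well, i.e.\ a vertex in $F$, i.e.\ precisely when $P^{c}\cap F\neq\emptyset$. The two cases together give the equivalence, and with it the first assertion.

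For the second assertion, parallelism of the truncation means that $F$ is cut out by a hyperplane parallel to $\pi$, which forces $F=P^{-a_{0}}$; the only use of this hypothesis is that then $-a_{0}\in N_{u'}(P)$, i.e.\ $\rho\subseteq N_{u'}(P)$, for every vertex $u'$ of $F$. (In particular $k\geqslant1$, since a vertex of a proper face always lies on an edge leaving that face.) Next I would identify the cones at the new vertices: each $v_{i}=E_{i}\cap\pi$ is a vertex of $Q$ lying in the relative interior of the edge $E_{i}$ and on $\pi$, so by the list of facets of a truncation in Remark~\ref{r_correspondingfacettruncation} the facets of $Q$ through $v_{i}$ are $f$ together with the facets $G\cap\pi^{\geqslant}$ coming from those facets $G$ of $P$ with $E_{i}\subseteq G$, whose outward normals generate $N_{E_{i}}(P)$; hence $N_{v_{i}}(Q)=\mathrm{cone}\bigl(\rho\cup N_{E_{i}}(P)\bigr)$. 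Since $u\in E_{i}$ gives $N_{E_{i}}(P)\subseteq N_{u}(P)$ and since $\rho\subseteq N_{u}(P)$, convexity of $N_{u}(P)$ yields $N_{v_{i}}(Q)\subseteq N_{u}(P)$ for each $i$, hence $\bigcup_{i}N_{v_{i}}(Q)\subseteq N_{u}(P)$. For the opposite inclusion take $c\in\mathrm{int}\,N_{u}(P)$; then $c$ lies in no other vertex cone of $\mathcal{N}(P)$ (two such meet in a common proper face, by Remark~\ref{r_preskmaksimalnihnijemaksimalan}), so $P^{c}=\{u\}\subseteq F$, and by the first assertion $c\in N_{v}(Q)$ for some $v\in\mathcal{V}(f)$; writing $v=E\cap\pi$ with $E$ joining a vertex $u'$ of $F$ to a vertex not in $F$, the formula above together with $\rho\subseteq N_{u'}(P)$ gives $c\in N_{v}(Q)\subseteq N_{u'}(P)$, so $c\in N_{u}(P)\cap N_{u'}(P)$; by Remark~\ref{r_preskmaksimalnihnijemaksimalan} this intersection is a face of $N_{u}(P)$, proper unless $u'=u$, so $c\in\mathrm{int}\,N_{u}(P)$ forces $u'=u$, whence $E=E_{i}$ and $c\in N_{v_{i}}(Q)$ for some $i$. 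Thus $\mathrm{int}\,N_{u}(P)\subseteq\bigcup_{i}N_{v_{i}}(Q)$; as $N_{u}(P)$ is full-dimensional and a finite union of closed cones is closed, passing to closures gives $N_{u}(P)=\overline{\mathrm{int}\,N_{u}(P)}\subseteq\bigcup_{i}N_{v_{i}}(Q)$, and with the reverse inclusion the claimed identity follows.

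I expect the main obstacle to be the case analysis behind the first assertion — pinning down that a truncation lowers the support function only when the whole maximising face is removed, together with the ensuing description of $Q^{c}$ — since the rest is then formal, resting on Remark~\ref{r_preskmaksimalnihnijemaksimalan} and Remark~\ref{r_correspondingfacettruncation}. A secondary subtlety is that the per-vertex identity genuinely needs parallelism: it enters only through $-a_{0}\in N_{u'}(P)$, and without it the cones $N_{v_{i}}(Q)$ need not lie in $N_{u}(P)$ at all, which is why the reverse inclusion must be obtained on $\mathrm{int}\,N_{u}(P)$ and then closed up rather than argued directly.
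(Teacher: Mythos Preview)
Your proof is correct. For the first assertion, however, your route is considerably more elaborate than the paper's: rather than analysing maximising faces and splitting on whether $P^{c}\subseteq F$, the paper simply observes that every vertex of $P$ not in $F$ is also a vertex of $\mathrm{tr}_FP$ with the same normal cone, and since both normal fans are complete, the unions of the \emph{remaining} maximal cones (those at vertices of $F$ for $P$, and those at vertices on $\pi$ for $\mathrm{tr}_FP$) must coincide as the common complement. This one-line ``complement'' argument bypasses the support-function case analysis entirely; your approach buys nothing extra here, though it is self-contained and does not invoke completeness of fans.

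For the second assertion, your proof of the inclusion $N_{v_i}(\mathrm{tr}_FP)\subseteq N_u(P)$ is essentially the paper's: the spanning rays of $N_{v_i}(\mathrm{tr}_FP)$ are $-a_0$ together with the outward normals to facets of $P$ containing $E_i$, and parallelism places $-a_0\in N_F(P)\subseteq N_u(P)$. The paper stops at this inclusion, leaving the reverse one implicit. Your interior-point argument (via Remark~\ref{r_preskmaksimalnihnijemaksimalan} and closure) makes explicit the step the paper omits, so here your proof is in fact more complete than the paper's, at the cost of length.
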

\begin{proof}  
Without lose of generality, suppose that $P$ is full dimensional. Let $a_0$ be an outward normal to the truncation hyperplane $\pi$.
%\noindent (i): According to Remark~\ref{r_preskmaksimalnihnijemaksimalan}, the intersection $N_{u}(P) \cap N_{w_i}(P)$ is the $(n-1)$-dimensional normal cone $N_{E_i}(P)$ spanned by the outward normals to the facets that contain $E_i$. At the other side, $v_i$ belongs to the truncation hyperplane and the hyperplanes which define all mentioned facets. Thus, $N_{v_i}(\text{tr}_FP)$ is spanned by their outward normals and by the ray $a$, which implies the claim.
%\noindent (ii): 
By the definition of truncation, $N_v(\text{tr}_FP)=N_v(P)$ for every vertex $v$ which is common for both polytopes. Hence, the first part of the claim follows directly from the fact that both fans are complete. Now, let $i$ be an arbitrary element of $[k]$. For every spanning ray $a$ of $N_{v_i}(\text{tr}_FP)$ such that $a \neq a_0$, there is a facet of $P$ which contains $E_i$ and whose an outward normal is $a$, and therefore, $a$ is contained in the cone $N_{u}(P)$. Since the truncation is parallel, there is a hyperplane parallel to $\pi$ which defines $F$, i.e.\ the functional $a_0$ attains the maximum value at $F$ over all points in $P$. It implies that $a_0$ is contained in the normal cone $N_F(P)$. According to Remark~\ref{r_preskmaksimalnihnijemaksimalan}, $N_F(P)$ is common face for all normal cones in $P$ at the vertices of $F$. Therefore, $a_0$ is contained in each of them. In particular, $a_0 \in N_{u}(P)$. We conclude that every spanning ray of $N_{v_i}(\text{tr}_FP)$ is contained in $N_{u}(P)$, which implies $N_{v_i}(\text{tr}_FP)\subseteq N_{u}(P)$. 
\end{proof}

\begin{dfn}\label{d_minkowski}
\emph{(cf.\ {\cite[Definition~1.1.]{B08}})}
Let $A,B\subseteq \mathbf{R}^n$. The \emph{Minkowski sum} of $A$ and $B$ is the set
\[
 A+B=\{x\in \mathbf{R}^n \mid x=x_1+x_2, \;  x_1\in A, \;  x_2\in B\}.
\]
We call $A$ and $B$ the \emph{summands} of $A+B$.
\end{dfn}

%\begin{rem}\label{r_svojstvakonveksnegeometrije}
%Note that the following holds for $A,B,C\subseteq \mathbf{R}^n$. 
%\begin{enumerate}
%\item $conv(A \cup B)=conv(conv(A)\cup conv(B))$;
%\item $(A \cup B)+C=(A+C)\cup (B+C)$.
%\end{enumerate}
%\end{rem}

The Minkowski sum of two polytopes is again a polytope, thus we can use this operation as a classical geometrical constructive tool, which allows us to produce new polytopes from known ones. Moreover, this operation establishes an abelian monoid structure on $\mathcal{M}_n$, where neutral element is the point $0=(0,\ldots,0)\in \mathbf{R}^n$.
Note that $\mathcal{M}_n$ has the structure of $\mathbf{R}$-module, i.e.\ for given $\lambda \in \mathbf{R}$ and $P \in \mathcal{M}_n$
\[
\lambda P =\{\lambda x \in \mathbf{R}^n \mid x\in P\}.
\]

\begin{rem} \label{r_lambdaP}
Scaling a polytope does not change its fan, i.e.\ for every $\lambda > 0$ and $P\in \mathcal{M}_n$, $\lambda P \simeq P$ holds.
\end{rem}

\begin{rem}\label{r_trivialMInk}
For every $0\leqslant \lambda \leqslant 1$ and $P \in \mathcal{M}_n$, $\lambda P$ is  trivially a summand of $P$ for
$$P=\lambda P+(1-\lambda)P.$$
\end{rem}

Through the paper, wherever we are talking about addition of polytopes, we refer to Minkowski sum.

\begin{dfn}\label{d_summand_produces_a_facet}
%Let $P_1$ and $P_2$ be two polytopes whose sum is a polytope $P$.
A polytope $P_2$ is a \emph{truncator summand} for a polytope $P_1$, when there is a truncation $\emph{tr}_FP_1$ of $P_1$ in its proper face $F$ such that
$$P_1+P_2 \simeq \emph{tr}_FP_1.$$
%their sum is a polytope normally equivalent to a polytope \emph{tr}$_FQ$ for some face $F$ of $Q$.
\end{dfn}

\begin{dfn}\label{d_truncator_set}
An indexed set of polytopes $\{P_i\}_ {i\in [m]}$ is a \emph{truncator set of summands} for a polytope $S_0$, when for every $i\in [m]$, $P_i$ is a truncator summand for a polytope $S_{i-1}$, where $S_i=S_{i-1}+P_i$, $i\in[m]$.
\end{dfn}

Now, let $e_i$, $i \in [n+1]$, be the endpoints of the standard basis vectors in $\mathbf{R}^{n+1}$
and let $$\Delta_I = conv\{e_i \mid i \in I\}$$ be the standard $(\lvert I \rvert -1)$-simplex for any given set $I\subseteq [n+1]$.

\begin{dfn}\label{d_Mink_realizacija}
Let $K$ be the simplicial complex of all nested sets with respect to the building set $\mathcal{B}$ and let $\{\mathcal{A}_1,\mathcal{A}_2\}$ be a partition of $\mathcal{B}$ such that the block $\mathcal{A}_1$ is the collection of all singleton elements of $\mathcal{B}$.
An n-polytope $P$ is an $n$-dimensional \emph{Minkowski-realisation} of $K$ when the following conditions are satisfied:
 \begin{enumerate}
   \item[\emph{(i)}] $P$ realises $K$;
   \item[\emph{(ii)}] there exists a function $\varphi:\mathcal{B}\longrightarrow \mathcal{M}_{n+1}$ such that

$$P=\Delta_{[n+1]}+\sum\limits_{\beta \in \mathcal{B}} \varphi(\beta);$$
%and every summand indexed by a non-singleton element of $\mathcal{B}$ produces the facet of $P$ properly labelled by that element.

 \item[\emph{(iii)}] for an indexing function $x: [m]\longrightarrow \mathcal{A}_2$ such that $\vert x(i)\rvert \geqslant \lvert x(j)\rvert$ for every $i<j$, the indexed set $\{P_i\}_{i \in [m]}$, where $P_i=\varphi(x(i))$,
is a truncator set of summands for the partial sum
$$\Delta_{[n+1]}+\sum_{\beta\in \mathcal{A}_1} \varphi(\beta).$$

 \end{enumerate}
\end{dfn}

The main question of the paper follows. It is related to the simplicial complex
$C$ defined in the previous section.

\begin{que}\label{q_pitanje}
How to
%find
define a polytope in $\mathbf{R}^{n+1}$, which is an $n$-dimensional Minkowski-realisation of $C$ and which is normally equivalent to $\mathbf{PA}_n$?
\end{que}

In Section 4, we answer the question in the cases $n=2$, while the general answer for every dimension is given in Section 5. Moreover, we define a family of $n$-polytopes with requested properties.

It is well known that every simple polytope except simplex is \emph{decomposable} (\cite[Chapter~15.1, p.\ 321]{G67}), i.e.\ it can be represented as a Minkowski sum in a nontrivial manner such that the representation possess a summand, which is not positively homothetic to the whole sum (see Remark~\ref{r_trivialMInk}). Thus, decomposability of $\mathbf{PA}_n$ is guaranteed, i.e.\ a nontrivial representation of the family of simple permutoassociahedra as a Minkowski sum exists. But, our goal is very specific representation according to Definition~\ref{d_Mink_realizacija}, and we are searching for a polytope, which does not need to be congruent to $\mathbf{PA}_n$. Still, by the additional request of Question~\ref{q_pitanje}, they have to be normally equivalent. In that manner, requesting normal equivalence between polytopes, we stay on the bridge between coincidence and combinatorial equivalence.

%For solving described problem, we also need the following known claims about Minkowski sum.
Let us recall the following.
\begin{prop}\label{prop_minkowski_svojstva1}
\emph{(cf.\ {\cite[Lemma~1.4.]{B08}})}
If $P_1=conv\{v_1,\ldots,v_k\}$ and \\$P_2=conv\{w_1,\ldots,w_l\}$ are polytopes in $\mathcal{M}_n$, then
\[
P_1+P_2=conv\{v_1+w_1,\ldots,v_i+w_j,\ldots,v_k+w_l\}.
\]
\end{prop}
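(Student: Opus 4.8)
The plan is to prove the two inclusions $P_1 + P_2 \subseteq \operatorname{conv}\{v_i + w_j\}$ and $\operatorname{conv}\{v_i + w_j\} \subseteq P_1 + P_2$ separately, since each is straightforward once one unwinds the definitions of convex hull and Minkowski sum. Write $Q := \operatorname{conv}\{v_i + w_j \mid i \in [k], j \in [l]\}$ for the polytope on the right-hand side.

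For the inclusion $P_1 + P_2 \subseteq Q$, I would take an arbitrary point $z \in P_1 + P_2$ and write $z = x_1 + x_2$ with $x_1 \in P_1$, $x_2 \in P_2$. Since $P_1 = \operatorname{conv}\{v_1,\ldots,v_k\}$, there are coefficients $\alpha_i \geqslant 0$ with $\sum_i \alpha_i = 1$ and $x_1 = \sum_i \alpha_i v_i$; similarly $x_2 = \sum_j \beta_j w_j$ with $\beta_j \geqslant 0$, $\sum_j \beta_j = 1$. Then the key computation is
\[
z = x_1 + x_2 = \Bigl(\sum_i \alpha_i\Bigr) x_2 + \Bigl(\sum_j \beta_j\Bigr) x_1 = \sum_{i,j} \alpha_i \beta_j\, v_i + \sum_{i,j} \alpha_i \beta_j\, w_j = \sum_{i,j} \alpha_i \beta_j\, (v_i + w_j),
\]
and since $\alpha_i \beta_j \geqslant 0$ and $\sum_{i,j} \alpha_i \beta_j = (\sum_i \alpha_i)(\sum_j \beta_j) = 1$, this exhibits $z$ as a convex combination of the points $v_i + w_j$, so $z \in Q$.

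For the reverse inclusion $Q \subseteq P_1 + P_2$, the cleanest argument is to observe that $P_1 + P_2$ is convex (a Minkowski sum of convex sets is convex, which follows immediately from Definition~\ref{d_minkowski}) and that it contains every generator $v_i + w_j$ of $Q$, since $v_i \in P_1$ and $w_j \in P_2$ give $v_i + w_j \in P_1 + P_2$. A convex set containing a finite set of points contains its convex hull, hence $Q \subseteq P_1 + P_2$. Combining the two inclusions yields $P_1 + P_2 = Q$, which is the claim. There is no real obstacle here; the only point requiring a little care is the bookkeeping in the displayed identity above — rewriting $x_1 + x_2$ by inserting the factors $\sum_i \alpha_i = 1$ and $\sum_j \beta_j = 1$ so that both summands become double sums over the same index set — and the elementary fact that the products $\alpha_i\beta_j$ of two probability vectors again form a probability vector.
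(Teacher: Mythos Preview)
Your proof is correct and is the standard argument for this elementary fact. The paper itself does not supply a proof of this proposition; it merely cites \cite[Lemma~1.4.]{B08}, so there is nothing to compare against.
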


It follows that for every point $A\in \mathbf{R}^n$, $P+\{A\}$ is a translate of the polytope $P$.  Throughout the text, for two given points $P$ and $T_i$, let $\{P_i\}=\{P\}+\{T_i\}$.

\begin{cor}\label{prop_minkowski_svojstva2}
The following holds in $\mathcal{M}_n$:
\begin{enumerate}
  \item[\emph{(i)}] if $P_1 = P_2$, up to translation, then $P+P_1=P+P_2$, up to translation;
  \item[\emph{(ii)}] if $P=P_1+P_2$, then $\dim(P)\geqslant \max\{\dim(P_1),\dim(P_2)\}$.
\end{enumerate}

\end{cor}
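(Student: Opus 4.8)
The claim to prove is Corollary~\ref{prop_minkowski_svojstva2}, which collects two elementary facts about Minkowski sums: (i) additivity of Minkowski sum with respect to translation-equivalence, and (ii) that the Minkowski sum dominates the dimension of each summand. My plan is to derive both directly from Proposition~\ref{prop_minkowski_svojstva1} (the ``convex hull of pairwise sums'' formula) together with the module structure on $\mathcal{M}_n$ recalled just above and the observation, noted right after Proposition~\ref{prop_minkowski_svojstva1}, that $P+\{A\}$ is a translate of $P$.

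\begin{proof}
We use Proposition~\ref{prop_minkowski_svojstva1} and the remark following it.

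\smallskip
\noindent\emph{(i)}\; Suppose $P_1=P_2$ up to translation, say $P_2=P_1+\{A\}$ for some $A\in\mathbf{R}^n$. For any $P\in\mathcal{M}_n$ we have, directly from Definition~\ref{d_minkowski} (associativity and commutativity of $+$ on point sets),
\[
P+P_2=P+(P_1+\{A\})=(P+P_1)+\{A\},
\]
so $P+P_2$ is the translate of $P+P_1$ by $A$. Hence $P+P_1=P+P_2$ up to translation.

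\smallskip
\noindent\emph{(ii)}\; Assume $P=P_1+P_2$. Translating each summand does not affect this equality up to translation by part (i), and translation does not change dimension, so we may assume $0\in P_2$. Then for every point $x\in P_1$ we have $x=x+0\in P_1+P_2=P$, i.e.\ $P_1\subseteq P$. Since the affine hull of a subset of $P$ is contained in the affine hull of $P$, this gives $\dim(P_1)\leqslant\dim(P)$. By symmetry (assuming instead $0\in P_1$), $\dim(P_2)\leqslant\dim(P)$ as well, whence $\dim(P)\geqslant\max\{\dim(P_1),\dim(P_2)\}$.
\end{proof}

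There is essentially no hard step here; the only point requiring a little care is (ii), where one must first normalise by a translation so that a summand genuinely sits inside the sum as a set, rather than merely ``up to translation.'' This is harmless precisely because part (i) guarantees translation-invariance of the construction and because $\dim$ is a translation invariant. Alternatively, one could argue (ii) through supporting functions and normal fans (Definition~\ref{d_normalconefan}), noting that $\mathcal{N}(P)$ refines each $\mathcal{N}(P_i)$ and hence $\operatorname{lin}$-span considerations force the dimension inequality, but the elementary set-theoretic inclusion argument above is shorter and self-contained.
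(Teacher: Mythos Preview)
Your proof is correct. The paper does not actually supply a proof of this corollary---it is stated as an immediate consequence of Proposition~\ref{prop_minkowski_svojstva1} and the remark that $P+\{A\}$ is a translate of $P$---so your argument simply spells out the elementary details the paper leaves implicit, and does so along exactly the lines the paper intends.
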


Unlike convexity, simplicity is often violated, i.e.\ the sum of simple polytopes often fails to be simple. Although Minkowski sum is a very simple geometrical operation, its result is not often intuitively predictable and obvious, especially in the case of summing a collection of polytopes of various dimensions or polytopes with a lot of vertices.

Our Question~\ref{q_pitanje} is related with development of the following Postnikov's idea implemented in his Minkowski-realisation of the family of nestohedra (see \cite{P09}).
Let $\mathcal{B}$ be a connected building set of the set $[n+1]$ such that $[n+1]\in \mathcal{B} $. For any set $B \in \mathcal{B}$, we consider the $(\lvert B \rvert -1)$-simplex $\Delta_B$, and the sum
\[
P_\mathcal{B} = \sum_{B\in \mathcal{B}} \Delta_B.
\]
It is shown that this sum is a simple $n$-polytope, which can be obtained by successive parallel truncations of an $n$-simplex, and vice versa, for a nestohedron $P$ and the corresponding building set $\mathcal{B}$, we have $P\sim P_\mathcal{B}$ (see \cite[Theorem~7.4.]{P09}).
%The existance of this realisations could be also taken for Proposition~ \ref{p_relaizacijaCo}.

Note that the following partial sum
$$\Delta_{[n+1]}+\sum_{\substack{B\in \mathcal{B} \\ \lvert B \rvert =1}} \Delta_B$$ is a translate of the $n$-simplex $\Delta_{[n+1]}$ by the point $(1,\ldots,1)\in \mathbf{R}^{n+1}$.
%ova recenica je dodata
For every totally ordered indexing set $I$ of the set of all non-singleton elements of $\mathcal{B}$, such that $\lvert B_i\rvert\geqslant \lvert B_j\rvert$ for $i<j$,
the set $\{\Delta_{B_i}\}_{i \in I}$ is a truncator set of summands for the translated simplex.
%Each of the remaining summands $\Delta_B$ produces an expansion of the simplex and an appearance of a new facet properly labelled by the non-singleton element $B$.
Therefore, according to Definition~\ref{d_Mink_realizacija}, this is indeed an $n$-dimensional Minkowski-realisation of the simplicial complex of all nested sets with respect to $\mathcal{B}-\{[n+1]\}$ (see Remark~\ref{r_postnikov_i_nas_nested}).

\section{the $2$-permutoassociahedron as a minkowski sum} \label{s_dvaD}
In this section we answer Question ~\ref{q_pitanje} in the case $n=2$. By Theorem~\ref{t_realizacijaZoran}, the dodecagon $\mathbf{PA}_{2}$ realises $C$ (see Figure~\ref{s:kp3pa2}). Thus, at the very beginning of this section, we could deliver 12 polytopes  whose sum with $\Delta_{[3]}$ is a dodecagon normally equivalent to $\mathbf{PA}_{2}$ and show that all conditions of Definition~\ref{d_Mink_realizacija} are satisfied.
Instead, we choose another approach, which leads us to the general criteria for finding these summands. As we shall see later in higher dimensions, the most of required summands are neither simplices, nor their sums. Moreover, they need not be even simple polytopes.

According to Section 2, we start with the triangle, i.e.\ the simplicial complex
\[
C_0=\bigl \{ \emptyset,\{1\},\{2\}, \{3\},\{1,2\},\{1,3\},\{2,3\}  \bigr \}
\]
and its building set
\[
\mathcal{B}_0=\bigl \{\{1\},\{2\}, \{3\},\{1,2\},\{1,3\},\{2,3\}  \bigr \},
\]
which leads us to the simplicial complex $C_1$ realised by 2-permutohedron, i.e.\ hexagon.
There are the following 6 maximal 0-nested sets:
\begingroup
\small
\[
\bigl\{\{1,2\},\{1\}\bigr\},\; \bigl\{\{1,2\},\{2\}\bigr\},\;
\bigl\{\{1,3\},\{1\}\bigr\},
\bigl\{\{1,3\},\{3\}\bigr\},\; \bigl\{\{2,3\},\{2\}\bigr\},\;
\bigl\{\{2,3\},\{3\}\bigr\},
\]
\endgroup
and thence,
\[\begin{array}{rll}
\mathcal{B}_1=\Bigl\{ & 
\bigl\{\{1\}\bigr\},\;\bigl\{\{2\}\bigr\},\;\bigl\{\{3\}\bigr\},\;
\bigl\{\{1,2\}\bigr\},\; \bigl\{\{1,3\}\bigr\},\;
\bigl\{\{2,3\}\bigr\} \;  \bigl\{\{1,2\},\{1\}\bigr\}, &
\\ \; & 
\bigl\{\{1,2\},\{2\}\bigr\},\;
\bigl\{\{1,3\},\{1\}\bigr\}, \;
\bigl\{\{1,3\},\{3\}\bigr\},\; \bigl\{\{2,3\},\{2\}\bigr\},\;
\bigl\{\{2,3\},\{3\}\bigr\}& \Bigr\},
\end{array}\]
i.e.\ $\mathcal{B}_1=C_1-\{\emptyset\}$.
According to the elements of the building set, we have the following set of 12 halfspaces:
\[\left\{\begin{array}{lllll}
\text{ } & \text{ }& x_{i_2}& \geqslant &  3
\\
x_{i_1} & + & x_{i_2}& \geqslant &  9
\\
x_{i_1} & + & 2x_{i_2}& \geqslant &  12.5 ,
\end{array}\right.\]
where $i_1$ and $i_2$ are distinct elements of the set $[3]$.
The simplicial complex $C$ is realised by the polytope $\mathbf{PA}_2$ defined as the intersection of the previous set of facet-defining halfspaces and the
hyperplane $x_1+x_2+x_3=27$.

It can be verified that $\mathbf{PA}_2$ is really a dodecagon in $\mathbf{R}^3$. Very efficient tool for such a verification is \texttt{polymake}, an open source software for researches in polyhedral geometry. This computational programme offers a lots of systems, around which one could deal with polytopes in different ways. In particular, there is a possibility to define a polytope as a Minkowski sum of already known ones. For representing $\mathbf{PA}_2$ (see Figure~\ref{s:javaview} left), it is enough to use convex hull codes cdd \cite{F05} and  \texttt{polymake}'s standard tool for interactive visualisation called JavaView \cite{PHPR99}. We extensively use \texttt{polymake} for all verifications that appear in this section.

\begin{figure}[h!h!h!]
\begin{center}
\includegraphics[width=0.8\textwidth]{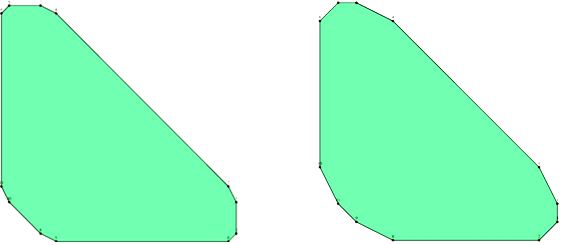}
\caption{JavaView visualisation of $\mathbf{PA}_2$ and $^M PA_2$} \label{s:javaview}
\end{center}
\end{figure}

By Corollary~\ref{c_labele_odPA_n}, all the edges of $\mathbf{PA}_2$ are properly labelled by the elements of $\mathcal{B}_1$ such that the edge labelled by $\beta$ is contained in $\pi_{\beta}$. Also, by Proposition~\ref{p_labele_odrelaizatoraC},
two edges have a common vertex if and only if their labels are comparable.

According to Question~\ref{q_pitanje} and Definition~\ref{d_Mink_realizacija}, our task is to establish a function $\varphi:\mathcal{B}_1\longrightarrow\mathcal{M}_3$ such that if
$$ ^M PA_2=\Delta_{[3]}+\sum_{\beta\in \mathcal{B}_1} \varphi(\beta),$$
then $ ^M PA_2$ is also a dodecagon satisfying Definition~\ref{d_Mink_realizacija}(iii). These two dodecagons have to be normally equivalent. If their edges are properly labelled by the elements of $\mathcal{B}_1$, Proposition~\ref{p_paralelni_feseti} implies that the equilabelled edges have to be parallel.

Let the image of every singleton $\beta\in\mathcal{B}_1$ be the corresponding simplex, i.e.\
\[
\varphi(\beta)= \Delta_{\cup \beta}.
\]
From the end of the previous section, we have that
the partial sum
$$S=\Delta_{[3]}+\sum_{\substack{\beta\in \mathcal{B}_1\\ \lvert \beta \rvert =1}} \varphi(\beta)=\Delta_{[3]}+\Delta_{\{1\}}+\Delta_{\{2\}}+\Delta_{\{3\}}+\Delta_{\{1,2\}}+\Delta_{\{1,3\}}+\Delta_{\{2,3\}}$$
is a completely truncated triangle in $\mathbf{R}^3$, which is a 2-dimensional Minkowski-realisation of the simplicial complex $C_1$. It is a hexagon with three pairs of parallel sides whose edges can be properly labelled by the corresponding $B\subset [3]$. Let us label these edges by $\{B\}$, i.e.\ by the corresponding singleton elements of $\mathcal{B}_1$. Note that they are parallel to the same labelled edges of $\mathbf{PA}_2$.
%Namely, the partial sum of the first four polytopes is a translate of the triangle $e_1e_2e_3$ by the point $(1,1,1)$. We label the sides of the shifted triangle by $\bigl\{\{1\}\bigr\}$, $\bigl\{\{2\}\bigr\}$ and $\bigl\{\{3\}\bigr\}$.
\begin{figure}[h!h!h!]
\begin{center}
\begin{tikzpicture} [baseline, scale=1.3]
    \filldraw (-1.5,1) circle (1pt)
                    (-0.5,1.5) circle (1pt)
                    (0.5,1.5) circle (1pt)
                    (1.5,1) circle (1pt);

 \draw (-2,0)--(-1.5,1)--(-0.5,1.5)--(0.5,1.5)--(1.5,1)-- (2,0);
  \draw [dashed] (-1.5,1)--(-1.25,1.5)--(-0.5,1.5)
        (0.5,1.5)--(1.25,1.5)--(1.5,1) ;
  \draw
   (-2.2,0.4) node[] {\scriptsize $\bigl\{\{1\}\bigr\}$}
   (0,1.7) node {\scriptsize $\bigl\{\{1,2\}\bigr\}$}
  (2.2,0.4) node{\scriptsize $\bigl\{\{2\}\bigr\}$}
  (-1.6,1.3) node {\scriptsize $\bigl\{\{1,2\},\{1\}\bigr\}$}
  (1.6,1.3) node  {\scriptsize $\bigl\{\{1,2\},\{2\}\bigr\}$};
\end{tikzpicture}
\end{center}
\caption{Properly labelled facets} \label{s:skica_labele}
\end{figure}
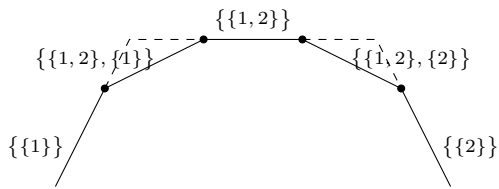

It remains to specify images of six non-singleton elements of $\mathcal{B}_1$, which are of the form $\bigl\{\{i_2,i_1\},\{i_2\}\bigr\}$. According to Definition~\ref{d_Mink_realizacija}(iii), for any order of the summands $\varphi\bigl(\bigl\{\{i_2,i_1\},\{i_2\}\bigr\}\bigr)$, each of them should be a truncator summand for the currently obtained partial sum. 
%In particular,if $\varphi \bigl(\bigl\{\{i_2,i_1\},\{i_2\}\bigr\}\bigr)$ is a polytope $P$, then $S+P$ should be a heptagon obtained by parallel translations of the edges of a polytope tr$_VS$, for some vertex $V$ of $S$.
Let $\{P_i\}_{i \in [6]}$ be an indexed set of all these summands and let us consider all partial sums obtained by adding the elements of this set to the hexagon $S$, step by step. We start with $S_0=S$ and consider every partial sum $S_i=S_{i-1}+P_i$, $i\in[6]$. Notice that $^MPA_2=S_6$.
Since for every $i\in[6]$, there is a truncation of $S_{i-1}$ in some vertex, normally equivalent to $S_{i}$, at $i$th step we can label the edges of $S_i$ by the elements of $\mathcal{B}_1$ in the following way: the corresponding edges of $S_{i}$ and $S_{i-1}$ are equilabelled, while the new appeared edge is labelled by some new label $\beta_i$ (see Remark~\ref{r_correspondingfacettruncation}).
At the end, in order to have all the edges of $S_6$ properly labelled, the following hold for every $i \in[6]$: if $P_i$ corresponds to $\varphi\bigl(\bigl\{\{i_2,i_1\},\{i_2\}\bigr\}\bigr)$, then $S_i \simeq$ tr$_VS_{i-1}$, where $V$ is the common vertex of the edges labelled by $\bigl\{\{i_2,i_1\}\bigr\}$ and $\bigl\{\{i_2\}\bigr\}$, and $\beta_i=\bigl\{\{i_2,i_1\},\{i_2\}\bigr\}$
(see Figure~\ref{s:skica_labele}). Moreover, since the dodecagons are normally equivalent, for every edge of the partial sum $S_i$ there is a parallel equilabelled edge of $\mathbf{PA}_2$.

The proof of the following proposition is quite different from what we discuss here, so it is given later in Section 5.
\begin{prop} \label{p_linijaNE}
If $\varphi$ is a function satisfying the conditions of Definition~\ref{d_Mink_realizacija}, then for every two distinct elements $i_1,i_2\in[3]$, $\varphi\bigl(\bigl\{\{i_2,i_1\},\{i_2\}\bigr\}\bigr)$ is not a line segment.
\end{prop}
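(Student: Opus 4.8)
The plan is to exploit the fact that Definition~\ref{d_Mink_realizacija}(iii) must hold for \emph{every} indexing of $\mathcal{A}_2$ with non-increasing cardinalities; since all non-singleton elements of $\mathcal{B}_1$ have cardinality $2$, we are free to put $\beta=\bigl\{\{i_2,i_1\},\{i_2\}\bigr\}$ first. Writing $S_0=\Delta_{[3]}+\sum_{\gamma\in\mathcal{A}_1}\varphi(\gamma)$ for the base polygon, Definition~\ref{d_summand_produces_a_facet} then forces $S_0+\varphi(\beta)\simeq\mathrm{tr}_F S_0$ for some proper non-facet face $F$ of $S_0$. Because $P=\Delta_{[3]}+\sum_{\gamma\in\mathcal{B}_1}\varphi(\gamma)$ realises $C$ it is a dodecagon, and since $P$ arises from $S_0$ by the six truncations of (iii), each of which adds exactly one facet-defining halfspace (Remark~\ref{r_correspondingfacettruncation}), the polygon $S_0$ has $12-6=6$ facets; hence $F$ is a vertex and $\mathrm{tr}_F S_0$ is a heptagon. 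The argument then comes down to showing that $S_0+\varphi(\beta)$ can never be a heptagon when $\varphi(\beta)$ is a line segment.

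Next I would compare normal fans. If $\varphi(\beta)$ is a line segment $L$, Example~\ref{e_fanduzi} tells us that $\mathcal{N}(L)=\{H,H^{\geqslant},H^{\leqslant}\}$ with $H$ a hyperplane normal to $L$; inside the plane carrying $S_0$ this $H$ is a single line through the origin, so $\mathcal{N}(L)$ has exactly two rays, and they are opposite. Since the fan of a Minkowski sum is the common refinement of the fans of the summands, the set of rays of $\mathcal{N}(S_0+L)$ is the set of rays of $\mathcal{N}(S_0)$ with those two opposite rays adjoined. The key observation is that $\mathcal{N}(S_0)$ is centrally symmetric (see the last paragraph): consequently either both opposite rays are already rays of $\mathcal{N}(S_0)$---in which case $S_0+L\simeq S_0$ is again a hexagon---or neither is, and then $S_0+L$ has eight facets. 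In both cases $S_0+L$ has a number of facets of parity different from that of a heptagon, so it cannot be normally equivalent to $\mathrm{tr}_F S_0$, contradicting (iii). Hence $\varphi(\beta)$ is not a line segment. (A $0$-dimensional $\varphi(\beta)$ is excluded in the same way, since $S_0+\{\mathrm{pt}\}\simeq S_0$ has six facets.)

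The step carrying the weight is the central symmetry of $\mathcal{N}(S_0)$. For the $\varphi$ relevant here---the one used in Section~4 and in the family $PA_{n,c}$---every singleton $\{S\}\in\mathcal{A}_1$ is sent to the simplex $\Delta_S$, so $S_0=\sum_{\emptyset\neq I\subseteq[3]}\Delta_I$, which is the nestohedron for the complete-graph (maximal) building set; its normal fan coincides with the braid fan, that is, $S_0$ is normally equivalent to the permutohedron $\mathbf{P}_3$, whose fan is centrally symmetric, and the proof is complete. For a completely unconstrained $\varphi$ satisfying Definition~\ref{d_Mink_realizacija} one cannot read off central symmetry for free, and one must instead use the incidence pattern of $C$ (Proposition~\ref{p_labele_odrelaizatoraC}) together with Lemma~\ref{l_konusiparalelnetrunkacije}: the vertex-truncation $\mathrm{tr}_F S_0$ is automatically parallel, so by that lemma the outer normal $n_\beta$ of the new facet lies in the interior of the pointed cone $N_F(S_0)=\mathrm{cone}(a,b)$ generated by the outer normals $a,b$ of the two facets of $S_0$ meeting at $F$ (the facets labelled $\{\{i_2\}\}$ and $\{\{i_2,i_1\}\}$), whence a segment $\varphi(\beta)$ would force the opposite ray $-n_\beta\in\mathrm{int}\,\mathrm{cone}(-a,-b)$ to be the outer normal of some facet of $S_0$; excluding this last possibility in full generality is precisely the combinatorial bookkeeping that the machinery of Section~5 carries out, and is where I expect the real difficulty to lie.
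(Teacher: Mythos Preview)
Your proposal is correct and follows essentially the same route as the paper. Both arguments put $\beta$ first in the truncator sequence, reduce the question to whether $S+L$ can be a heptagon for $S$ the permutohedral hexagon and $L$ a line segment, and then rule this out via the common refinement of fans: the fan of $S$ is centrally symmetric (``three pairs of parallel sides'' in the paper's phrasing), the fan of $L$ contributes two opposite rays, and the refinement therefore has either six or eight rays, never seven.

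One remark: the paper's proof, just like your core argument, uses the specific hexagon $S$ of Section~4, i.e.\ it tacitly assumes that $\varphi$ restricted to $\mathcal{A}_1$ sends $\{B\}$ to $\Delta_B$. Your final paragraph is more honest than the paper about this point: for a completely general $\varphi$ satisfying Definition~\ref{d_Mink_realizacija} the hexagon $S_0$ need not be centrally symmetric a priori, and the paper does not supply an argument covering that case either. So your speculation about ``where the real difficulty lies'' is apt, but it goes beyond what the paper itself proves; for the purposes of matching the paper you can simply drop that last paragraph.
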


From the previous proposition and Corollary~\ref{prop_minkowski_svojstva2}(ii), for every two distinct elements $i_1,i_2\in[3]$, $\varphi\bigl(\bigl\{\{i_2,i_1\},\{i_2\}\bigr\}\bigr)$ is a polygon. Since the order of summands is irrelevant, we start with $\varphi\bigl(\bigl\{\{1,2\},\{1\}\bigr\}\bigr)$ being a triangle $T_1T_2T_3$, where $T_1(a_1,b_1,c_1)$, $T_2(a_2,b_2,c_2)$ and $T_3(a_3,b_3,c_3)$ are points in $\mathbf{R}^3$.
This triangle is a truncator summand for $S$ such that $S+T_1T_2T_3$ is a heptagon normally equivalent to the heptagon obtained from $S$ by truncation in the vertex common for the edges labelled by $\bigl\{\{1,2\}\bigr\}$ and $\bigl\{\{1\}\bigr\}$.
Instead to continue with the whole sum $S$, we consider its partial sum
$$\Delta_{[3]}+\Delta_{\{1\}}
+\Delta_{\{2\}}+\Delta_{\{3\}}+\Delta_{\{1,2\}},$$
which is the trapezoid $ABCD$ given in Figure~\ref{s:skica_12}.
Namely, since the whole sum $S$ is a Minkowki-realisation of $C_1$, its summands indexed by non-singleton sets make a truncator set of the triangle. Hence, we are able to remove some of them such that the sum of the remaining summands has the vertex where the edges labelled by $\{1,2\}$ and $\{1\}$ meet (the vertex $D$).
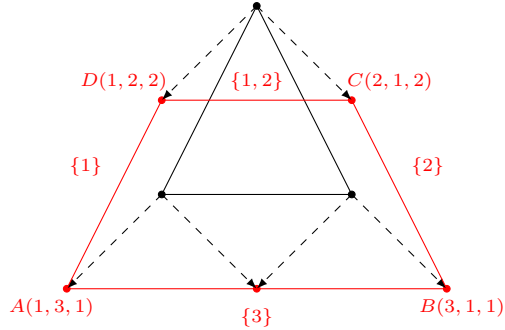
\begin{figure}[h!h!h!]
\begin{center}
\begin{tikzpicture} [baseline, scale=1.25]
  %trougao

  \filldraw [black] (0,1.5) circle (1pt)  %C
                    (-1,-0.5) circle (1pt) %A
                    (1,-0.5) circle (1pt);   %B
    \filldraw [red] (-1,0.5) circle (1pt)  %C1
                    (1,0.5) circle (1pt) %C2
                    (2,-1.5) circle (1pt) %B2
                    (-2,-1.5) circle (1pt) %A1
                 (0,-1.5) circle (1pt);   %A2,B1
   \draw (0,1.5)--(-1,-0.5)--(1,-0.5)--(0,1.5);

   \begin{scope}[>=latex]
   \draw[->,dashed] (0,1.5) -- (-1,0.5);
   \draw[->,dashed] (0,1.5) -- (1,0.5);
   \draw[->,dashed] (-1,-0.5) -- (-2,-1.5);
   \draw[->,dashed] (-1,-0.5) -- (0,-1.5);
   \draw[->,dashed] (1,-0.5) -- (0,-1.5);
   \draw[->,dashed] (1,-0.5) -- (2,-1.5);
   \end{scope}

  \draw [red] (-1,0.5) node[above, xshift=-0.5cm] {\scriptsize $D(1,2,2)$}
  --(1,0.5) node [above, xshift=0.5cm]{\scriptsize $C(2,1,2)$}--(2,-1.5) node[below, xshift=0.2cm] {\scriptsize $B(3,1,1)$}--(-2,-1.5) node[below,xshift=-.2cm] {\scriptsize $A(1,3,1)$}-- (-1,0.5);

   \draw[red]
   (-1.8,-0.2) node[] {\scriptsize $\{1\}$}
   (0,0.7) node {\scriptsize $\{1,2\}$}
   (1.8,-0.2) node {\scriptsize $\{2\}$}
   (0,-1.8) node  {\scriptsize $\{3\}$};
\end{tikzpicture}
\end{center}
\caption{The partial sum $\Delta_{[3]}+\Delta_{\{1\}}+\Delta_{\{2\}}+\Delta_{\{3\}}+\Delta_{\{1,2\}}$} \label{s:skica_12}
\end{figure}

\begin{figure}[h!h!h!]
\begin{center}
\begin{tikzpicture} [baseline, scale=1.4]
  %trapez
  \draw  (1,1) node[above, xshift=0.8cm, yshift=-0.15cm] {\scriptsize $C(2,1,2)$}
 -- (2,-1) node [below, xshift=0.8cm,yshift=0.2cm ]{\scriptsize $B(3,1,1)$}
 -- (-2,-1) node[below, xshift=-0.6cm, yshift=0.2cm] {\scriptsize $A(1,3,1)$}
 -- (-1,1) node[above,xshift=-.6cm, yshift=-0.1cm] {\scriptsize $D(1,2,2)$}
 -- cycle;

   \begin{scope}[>=latex]
    \draw[->,dashed] (2,-1) -- (3,-1.5);   %trojke
   \draw[->,dashed] (-2,-1) -- (-1,-1.5);
   \draw[->,dashed] (-1,1) -- (0,0.5);
    \draw[->,dashed] (1,1) -- (2,0.5);

     \draw[->,dashed] (2,-1) -- (2.5,-0.5); %dvojke
   \draw[->,dashed] (-2,-1) -- (-1.5,-0.5);
    \draw[->,dashed] (1,1) -- (1.5,1.5);
     \draw[->,dashed] (-1,1) -- (-0.5,1.5);

   \draw[->,dashed] (2,-1) -- (1.2,-1.5);  %jedinice
   \draw[->,dashed] (-2,-1) -- (-2.8,-1.5);
    \draw[->,dashed] (1,1) -- (0.2,0.5);
   \draw[->,dashed] (-1,1) -- (-1.8,0.5);

   \end{scope}

   \filldraw[red]
   (-2.8,-1.5) circle (1pt) %A1
  (-1.8,0.5) circle (1pt)  %D1
  (1.5,1.5)circle (1pt)  %C2
   (-0.5,1.5) circle (1pt) %D2
   (3,-1.5) circle (1pt);   %B3

   \draw[red]  (-2.8,-1.5)
 -- (3,-1.5)
 -- (1.5,1.5)
  -- (-0.5,1.5)
 -- (-1.8,0.5)
 -- cycle;

   \filldraw (0.2,0.5) circle (1pt) %C1
   (1.2,-1.5) circle (1pt)  %B1
   (2.5,-0.5) circle (1pt) %B2
   (2,0.5)  circle (1pt) %C3
   (0,0.5)  circle (1pt) %D3
   (-1,-1.5) circle (1pt) %A3
   (-1.5,-0.5) circle (1pt); %A2

  \draw (0.4,0.3) node {\scriptsize $C_1$};
  \draw (-0.2,0.3) node {\scriptsize $D_3$};
  \draw (-1,-1.7) node {\scriptsize $A_3$};
  \draw (2.2,0.4) node {\scriptsize $C_3(2+a_3,1+b_3,2+c_3)$};
  \draw (0.4,-1.7) node {\scriptsize $B_1(3+a_1,1+b_1,1+c_1)$};
  \draw (-3.2,-1.7) node {\scriptsize $A_1(1+a_1,3+b_1,1+c_1)$};
  \draw (-2.9,0.5) node {\scriptsize $D_1(1+a_1,2+b_1,2+c_1)$};
  \draw (2,1.7) node {\scriptsize $C_2(2+a_2,1+b_2,2+c_2)$};
  \draw  (2.6,-0.3) node {\scriptsize $B_2$};
 \draw (-0.6,-0.3) node {\scriptsize $A_2(1+a_2,3+b_2,1+c_2)$};
 \draw (-0.7,1.7) node {\scriptsize $D_2(1+a_2,2+b_2,2+c_2)$};
 \draw (2.8,-1.7) node {\scriptsize $B_3(3+a_3,1+b_3,1+c_3)$};

\end{tikzpicture}
\end{center}
\caption{ $\Delta_{[3]}+\Delta_{\{1\}}+\Delta_{\{2\}}+\Delta_{\{3\}}+\Delta_{\{1,2\}}+T_1T_2T_3$} \label{s:skica_12_1}
\end{figure}
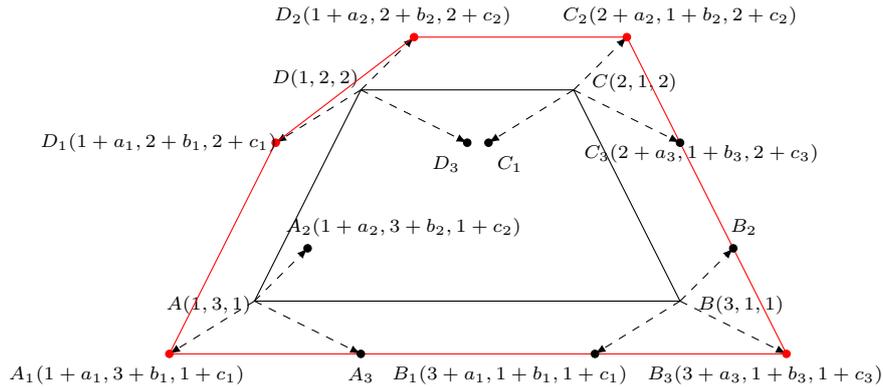

In order to find $T_1T_2T_3$, we focus on an appropriate ``local polytope'', e.g.\ the trapezoid $ABCD$. We assume that $T_1T_2T_3$ is a truncator summand for $ABCD$, which means that
the polytope
$$ABCD+T_1T_2T_3=conv\{A_1,A_2,A_3,B_1,B_2,B_3,C_1,C_2,C_3,D_1,D_2,D_3\},$$
is a pentagon normally equivalent to the polytope obtained from the trapezoid by truncation in the vertex $D$. Also, $-(2,1,0)$ should be an outward normal vector to the new appeared edge.
Let us assume that $D_1D_2$ is that edge such that $D_1$ and $D_2$ are also common for the facets with the outward normal vectors $-(1,0,0)$ and $-(1,1,0)$, respectively (see Figure~\ref{s:skica_12_1}).
We also assume that $A_1,B_3$ and $C_2$ are the vertices of $ABCD+T_1T_2T_3$ such that $A_1$ is common for the edges with the outward normals $-(1,0,0)$ and $-(0,0,1)$, $B_3$ is common for the edges with the outward normal vectors $-(0,1,0)$ and $-(0,0,1)$ and $C_2$ is common for those ones with the outward normal vectors $-(0,1,0)$ and $-(1,1,0)$.
It implies the following system of equations:
\[
\systeme{ a_1+b_1+c_1=a_2+b_2+c_2, a_1+b_1+c_1=a_3+b_3+c_3,2a_1+b_1=2a_2+b_2,c_1=c_3,b_2=b_3. }
\]
The first two follow from the fact that
all translates $A_iB_iC_iD_i$ of the trapezoid $ABCD$, $i \in [3]$, have to lie in the same plane parallel to the plane $x_1+x_2+x_3=5$, in which $ABCD$ lies.

Since $A_2,A_3,B_1,B_2,C_1,C_3 \in conv\{A_1,B_3,C_2,D_1,D_2\}$, we have the following set of inequalities:
\[\begin{array}{lll}
a_1 < a_2 \leqslant a_3, &
b_2 \leqslant b_3 < b_1, &
c_1\leqslant c_3 < c_2 .
%\\[1ex]
\end{array}\]
%\vspace{-5mm}
Solving the system, we get that the points $T_i$ are $$T_1(a_1,b_1,c_1), \;  T_2\bigl(\frac{2a_1+b_1-b_3}{2},b_3,\frac{b_1-b_3}{2}+c_1\bigr),  \;  T_3(a_1+b_1-b_3,b_3,c_1),$$
i.e.
$$T_1(0,b_1-b_3,0), \;  T_2\bigl(\frac{b_1-b_3}{2},0,\frac{b_1-b_3}{2}\bigr),  \;  T_3(b_1-b_3,0,0),$$
up to translation.
It remains to conclude that $T_1T_2T_3$ is any translate of a triangle whose vertices are
$$T_1(0,2\lambda,0), \;  T_2\bigl(\lambda,0,\lambda\bigr),  \;  T_3(2\lambda,0,0),$$
where $\lambda>0$ (see Remark~\ref{r_lambdaP}).
Looking carefully at Figure~\ref{s:skica_12_1}, we can notice that the triangle $ABC$ is also one of them for $\lambda=1$. Let $\varphi \bigl(\bigl\{\{1,2\},\{1\}\bigr\}\bigr)$ be $T_1T_2T_3=conv\{(0,2,0),(1,0,1),(2,0,0)\}$. One can verify that for the vertex $V$ of the hexagon $S$, which is common for the edges labelled by $\bigr \{  \{1,2\}\bigl\}$ and $\bigr \{  \{1\}\bigl\}$, $S+T_1T_2T_3 \simeq$ tr$_VS$ holds, indeed.

Considering an appropriate local polytope, we define images of all non-singleton elements of the building set analogously:
\[
\varphi\bigl(\bigl\{\{i_2,i_1\},\{i_2\}\bigr\}\bigr)=conv\{2e_{i_1},e_{i_2}+e_{i_3},2e_{i_2}\},
\]
where $i_1,i_2$ and $i_3$ are mutually distinct elements of the set $[3]$. Together with already defined images of singleton elements of $\mathcal{B}_1$, we obtained the polytope

$$ ^M PA_2=\Delta_{[3]}+\sum_{\beta\in \mathcal{B}_1} \varphi(\beta).$$

One may verify that $^M PA_2$ is a dodecagon whose vertices are all permutations of the coordinates of the points $(1,5,13)$ and $(2,3,14)$. This dodecagon can also be defined as the intersection of the hyperplane $x_1+x_2+x_3=19$ and the following set of facet-defining halfspaces:  
\[\left\{\begin{array}{lllll}
\text{ } & \text{ }& x_{i_2}& \geqslant &  1
\\
x_{i_1} & + & x_{i_2}& \geqslant &  5
\\
x_{i_1} & + & 2x_{i_2}& \geqslant &  7 ,
\end{array}\right.\]
where $i$ and $j$ are distinct elements of the set $[3]$.
Therefore, two dodecagons are normally equivalent (see Figure~\ref{s:javaview}).
We also verify that Definition~\ref{d_Mink_realizacija}(iii) is satisfied by analysing each partial sum that constitutes $ ^M PA_2$, step by step, for any order of summands.
Finally, according to Definition~\ref{d_Mink_realizacija}, we conclude that $^M PA_2$ is a 2-dimensional Minkowski-realisation of the simplicial complex $C$.

\section{the $n$-permutoassociahedron as a minkowski sum}\label{s_triD}
In the previous section, we gave Minkowski-realisations for 2-permutoassocia-\linebreak hedron handling only with equations of hyperplanes, which define facets of the resulting polytope. We started from local polytopes that were chosen to define particular summands. All verifications were done manually or with a help of \texttt{polymake}. It was done with intention to postpone some definitions and claims about relation between Minkowski sum and fans refinement. However, these matters are necessary for Minkowski-realisation of $n$-permutoassociahedra.

\begin{prop}\label{c_fanovi_sabiraka}
\emph{(cf.\ {\cite[Proposition~7.12.\ and the definition at p.195]{Z95}})}
The fan of the Minkowski sum of two polytopes is the common refinement of their individual fans, i.e.\
\[
\mathcal{N}(P_1+P_2)=\{ N_1 \cap N_2 \mid N_1 \in \mathcal{N}(P_1), N_2\in\mathcal{N}(P_2)\}.
\]
\end{prop}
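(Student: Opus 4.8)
The whole proof rests on one elementary fact: support functions add under Minkowski sum. By Definition~\ref{d_suportfunction} and Definition~\ref{d_minkowski},
\[
s_{P_1+P_2}(v)=s_{P_1}(v)+s_{P_2}(v)\qquad\text{for every }v,
\]
since a point of $P_1+P_2$ has the form $y_1+y_2$ with $y_i\in P_i$ and $\langle v,y_1+y_2\rangle$ is maximised exactly when each $\langle v,y_i\rangle$ is. For a polytope $P$ and a functional $v$ write $P^{[v]}=\{y\in P\mid\langle v,y\rangle=s_P(v)\}$ for the face on which $v$ attains its maximum; then, by Definition~\ref{d_normalconefan}, $v\in N_F(P)$ iff $F\subseteq P^{[v]}$, and $v\in\mathrm{relint}\,N_F(P)$ iff $P^{[v]}=F$.

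The plan is to first establish two combination lemmas. \emph{Face-addition:} $(P_1+P_2)^{[v]}=P_1^{[v]}+P_2^{[v]}$ for every $v$. The inclusion ``$\supseteq$'' is immediate from additivity of $s$; for ``$\subseteq$'', if $y_1+y_2\in(P_1+P_2)^{[v]}$ with $y_i\in P_i$, then $\langle v,y_1\rangle+\langle v,y_2\rangle=s_{P_1}(v)+s_{P_2}(v)$ while $\langle v,y_i\rangle\leqslant s_{P_i}(v)$, and two numbers each at most its bound summing to the sum of the bounds must each equal its bound, so $y_i\in P_i^{[v]}$. The same ``two numbers'' argument for a single polytope shows that $a,b\in N_F(P)$ implies $P^{[a+b]}=P^{[a]}\cap P^{[b]}$ (both faces contain $F$, so their intersection is non-empty and the supports add exactly). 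Combining face-addition with Definition~\ref{d_normalconefan} then gives, for any $v$ and $G_i:=P_i^{[v]}$, the \emph{cone-addition} identity
\[
N_{(P_1+P_2)^{[v]}}(P_1+P_2)=N_{G_1}(P_1)\cap N_{G_2}(P_2)
\]
(``$\supseteq$'': if $w$ lies in both $N_{G_i}(P_i)$ then $G_i\subseteq P_i^{[w]}$, hence $G_1+G_2\subseteq(P_1+P_2)^{[w]}$; ``$\subseteq$'': if $G_1+G_2\subseteq(P_1+P_2)^{[w]}$, pick $x_i\in G_i$ and run the ``two numbers'' argument on $x_1+x_2$).

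The two inclusions of the proposition now follow. For $\mathcal{N}(P_1+P_2)\subseteq\{N_1\cap N_2\}$: any cone of $\mathcal{N}(P_1+P_2)$ is $N_F(P_1+P_2)$ for some face $F$; choosing $v\in\mathrm{relint}\,N_F(P_1+P_2)$ makes $F=(P_1+P_2)^{[v]}$, and cone-addition exhibits $N_F(P_1+P_2)=N_{G_1}(P_1)\cap N_{G_2}(P_2)$ with $G_i=P_i^{[v]}$. For the reverse inclusion, fix $N_i=N_{F_i}(P_i)$, set $C:=N_1\cap N_2$ (a polyhedral cone), pick $v\in\mathrm{relint}\,C$, and put $G_i:=P_i^{[v]}$; since $v\in N_i$ we get $F_i\subseteq G_i$, hence $N_{G_i}(P_i)\subseteq N_i$ and $N_{G_1}(P_1)\cap N_{G_2}(P_2)\subseteq C$, and by cone-addition this left-hand side is a cone of $\mathcal{N}(P_1+P_2)$. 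It remains to prove $C\subseteq N_{G_i}(P_i)$, i.e. $G_i\subseteq P_i^{[w]}$ for all $w\in C$: here one uses (a) $v+w\in\mathrm{relint}\,C$, since the relative interior of a convex cone absorbs the cone under addition; (b) $P_i^{[v']}$ is the \emph{same} face $G_i$ for every $v'\in\mathrm{relint}\,C$ (given $v,\tilde v\in\mathrm{relint}\,C$ there is $\delta>0$ with $v-\delta\tilde v\in C$, and as $v-\delta\tilde v,\tilde v\in N_i$ one gets $P_i^{[v]}=P_i^{[v-\delta\tilde v]}\cap P_i^{[\tilde v]}\subseteq P_i^{[\tilde v]}$, with equality by symmetry); and (c) the single-polytope identity $P_i^{[v+w]}=P_i^{[v]}\cap P_i^{[w]}$, valid because $v,w\in N_i$. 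Then $G_i=P_i^{[v+w]}=P_i^{[v]}\cap P_i^{[w]}\subseteq P_i^{[w]}$, so $C=N_{G_1}(P_1)\cap N_{G_2}(P_2)\in\mathcal{N}(P_1+P_2)$.

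The step I expect to be the real obstacle is point (b) above (with (a)): one must resist choosing $v$ in the relative interiors of $N_1$ and $N_2$ simultaneously, because in general $\mathrm{relint}(N_1\cap N_2)\neq\mathrm{relint}\,N_1\cap\mathrm{relint}\,N_2$ — the latter can be empty while $N_1\cap N_2$ is a perfectly good cone of the refinement (e.g.\ two rays of the respective fans meeting only at the origin). Everything else is routine bookkeeping with support functions and polyhedral cones. Alternatively, the statement can simply be quoted from \cite[Proposition~7.12 and the discussion on p.~195]{Z95}.
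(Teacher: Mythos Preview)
Your argument is correct and is essentially the standard support-function proof of this fact. Note, however, that the paper does not actually prove Proposition~\ref{c_fanovi_sabiraka}: it is stated with a reference to \cite[Proposition~7.12 and p.~195]{Z95} and no proof is given (the \texttt{proof} environment immediately following it in the source is the delayed proof of Proposition~\ref{p_linijaNE}). So you have supplied considerably more than the paper does, and you already anticipate this at the end of your proposal by remarking that one may simply quote Ziegler.
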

\begin{proof} [Proof of Proposition~\ref{p_linijaNE}]
Let us suppose that such a line segment $L \subset\mathbf{R}^3$ exists for one pair of distinct elements $i_1,i_2 \in [3]$.
By Proposition~\ref{c_fanovi_sabiraka} and Example~\ref{e_fanduzi}, the fan of the partial sum $S+L$ is the common refinement of the set $\{H,H^{\geqslant},H^{\leqslant}\}$, where $H$ is the plane normal to $L$. Since $S$ is a hexagon with three pairs of parallel sides, its fan is the set consisted of three planes with a common line and six dihedra determined by them. It is straightforward that every refinement of such a fan, which also refines the set $\{H,H^{\geqslant},H^{\leqslant}\}$, always leads either to the same fan or to the fan of an octagon with four pairs of parallel sides. Hence, $S+L$
is not a heptagon, i.e.\ $L$ is not a truncator summand for $S$, which contradicts Definition~\ref{d_Mink_realizacija}(iii).
\end{proof}

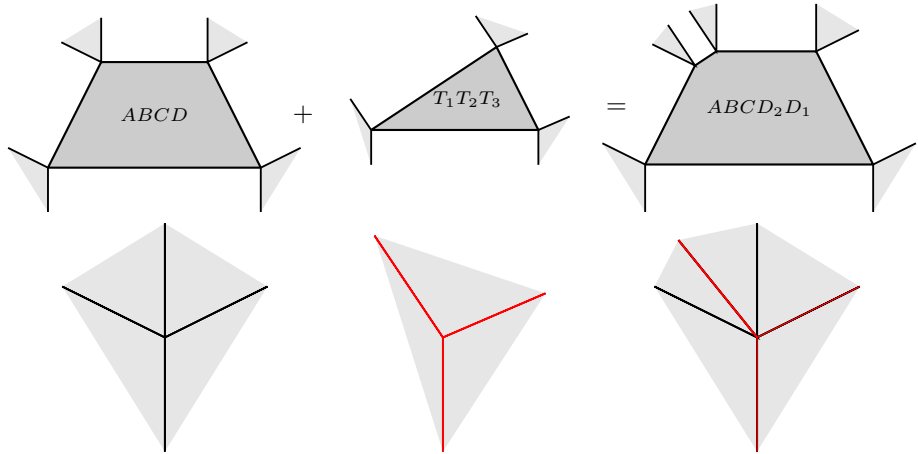
\begin{figure}[h!h!h!]
\begin{center}

\begin{tabular}{cccc}

\begin{tikzpicture}[scale=0.7]
%trapez
\filldraw[fill=gray!40!white, draw=black!40!black,line width=.8pt]
(-1,1) -- (0,-1) -- (-4,-1)-- (-3,1) -- cycle;

\draw  (-2,0) node {\scriptsize $ABCD$};
\draw  (0.8,0) node {$+$};

   \filldraw[fill=gray!20!white, draw=black!40!black,line width=.7pt]
   (-0.25,1.375) -- (-1,1) -- (-1,1.84)     %C
   (-4.75,-0.625) -- (-4,-1) -- (-4,-1.84) %A
   (0,-1.84) -- (0,-1) -- (0.75,-0.625)  %B
   (-3.75,1.375) -- (-3,1) -- (-3,1.84);  %D
\end{tikzpicture}
&
\begin{tikzpicture} [baseline, scale=0.55]
%trougao
  \filldraw[fill=gray!40!white, draw=black!40!black,line width=.8pt]
(0,4) -- (1,2)  -- (-3,2) -- cycle;
   \draw  (-0.7,2.75) node {\scriptsize $T_1T_2T_3$};

  \filldraw[fill=gray!20!white, draw=black!40!black,line width=.7pt]
    (-3.5,2.75) -- (-3,2) -- (-3,1.16)  %T_1
    (-0.5,4.75) -- (0,4) -- (0.75,4.325)  %T_2
   (1,1.16) -- (1,2) -- (1.75,2.325);     %T_3

 \end{tikzpicture}
&
\begin{tikzpicture} [scale=0.75]
%petougao
\filldraw[fill=gray!40!white, draw=black!40!black,line width=.8pt]  (-1,1) %node[above, xshift=0.2cm] {\scriptsize $C$}
 -- (0,-1) %B
 -- (-4,-1) %A
 -- (-3.125,0.75) %D_1
  -- (-2.75,1) %D_2
  -- cycle;
 	\draw  (-2,0) node {\scriptsize $ABCD_2D_1$};
 	\draw  (-4.5,0) node {$=$};

   \filldraw[fill=gray!20!white, draw=black!40!black,line width=.7pt]
   (-0.25,1.375) -- (-1,1) -- (-1,1.84)     %C
   (-4.75,-0.625) -- (-4,-1) -- (-4,-1.84) %A
   (0,-1.84) -- (0,-1) -- (0.75,-0.625)  %B
    (-3.875,1.125) -- (-3.125,0.75) -- (-3.6,1.467) %D1
  (-3.225,1.717) -- (-2.75,1) -- (-2.75,1.84);  %D_2
\end{tikzpicture}

\\
%trapez fan
 \begin{tikzpicture}
 \begin{scope} [scale=1.8]
   \filldraw[fill=gray!20!white, draw=black!40!black,line width=.7pt]
       (-8.75,-2.625) -- (-8,-3) -- (-8,-2.16)  %D
        (-8,-3.84) -- (-8,-3) -- (-7.25,-2.625)  %B
         (-8.75,-2.625) -- (-8,-3) -- (-8,-3.84) %A
       (-7.25,-2.625) -- (-8,-3) -- (-8,-2.16); %C
        \end{scope}

         \end{tikzpicture}
&
 \begin{tikzpicture}
 %trougao fan
 \begin{scope} [scale=1.8]
   \filldraw[fill=gray!20!white, draw=red,line width=.7pt]
       (-0.5,0.75) -- (0,0) -- (0,-0.84)  %T_1
    (-0.5,0.75) -- (0,0) -- (0.75,0.325)  %T_2
   (0,-0.84) -- (0,0) -- (0.75,0.325);     %T_3
        \end{scope}
         \end{tikzpicture}
         &
 \begin{tikzpicture}
 %petougao fan
 \begin{scope} [scale=1.8]
  \filldraw[fill=gray!20!white, draw=black!40!black,line width=.7pt]
       (0.75,0.375) -- (0,0) -- (0,0.84)     %C
   (-0.75,0.375) -- (0,0) -- (0,-0.84) %A
   (0,-0.84) -- (0,0) -- (0.75,0.375)  %B
    (-0.75,0.375) -- (0,0) -- (-0.575,0.717) %D1
     (-0.575,0.717) -- (0,0) -- (0,0.84);  %D_2

    \draw[red, line width=.8pt] (0,0) -- (-0.575,0.717);
    \draw[red, line width=.5pt] (0,0) -- (0.75,0.375);
    \draw[red, line width=.5pt] (0,0) -- (0,-0.84);
        \end{scope}
         \end{tikzpicture}
\end{tabular}
\end{center}
\caption{Normal cones and fans} \label{s:skica_fanovi}
\end{figure}
The previous figure illustrates the common refinement of the individual fans of the trapezoid $ABCD$ and the triangle $T_1T_2T_3$ (see Section 4). The fan of the resulting pentagon is a refinement of the trapezoid's fan by the ray contained in its normal cone at the vertex $D$.
Let us remember that $T_1T_2T_3$ was defined as a translate of the triangle $ABC$, and therefore, its sum with the trapezoid has an edge parallel to $AC$. 

The text below determines a relationship between two polytopes $P_1$ and $P_2$ whose sum is normally equivalent to tr$_F P_1$. In other words, we search for a polytope $P_2$ which is a truncator summand for a given polytope $P_1$. From \cite{P09}, if $P_1$ is a simplex, then $P_2$ is the convex hull of those vertices of $P_1$ that do not belong to $F$. The following proposition shows that the same holds for every simple polytope $P_1$ when $F$ is a vertex. But, if $\dim(F)>0$, then tr$_F P_1 \sim  P_1+P_2$ usually fails. One can find a lot of examples.  
%Namely, first of all, in general, there is no hyperplane which contains all the vertices of $P_2$ adjacent to some vertex in $F$ in $P_1$, which . 

\begin{prop}\label{p_trunkacijatemenaprostog}
Let $P_1\in \mathcal{M}_n$ be an $n$-polytope whose vertex $v$ is contained in exactly $n$ facet. If $P_2=conv(\mathcal{V}(P_1)-\{v\})$, then there is a truncation $\emph{tr}_v P_1$ such that \[P_1+P_2\simeq \emph{tr}_vP_1.\]  
\end{prop}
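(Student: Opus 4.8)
The plan is to compute the normal fan of $P_1+P_2$ via Proposition~\ref{c_fanovi_sabiraka} and to recognise it as the fan of a suitably chosen parallel truncation of $P_1$ at $v$, invoking Lemma~\ref{l_konusiparalelnetrunkacije}. Throughout, let $G_1,\dots ,G_n$ be the $n$ facets of $P_1$ through $v$, with outward normals $a_1,\dots ,a_n$, so that $N_v(P_1)$ is the simplicial cone generated by $a_1,\dots ,a_n$; let $w_i$ be the vertex adjacent to $v$ along the edge $E_i$ not contained in $G_i$, and put $u_i=w_i-v$, so that $u_1,\dots ,u_n$ is a basis and $N_v(P_1)=\{c:\langle c,u_k\rangle\le 0,\ k\in[n]\}$. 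First I would record the elementary fact that $\mathcal V(P_2)=\mathcal V(P_1)-\{v\}$: a functional witnessing a vertex $u\ne v$ of $P_1$ still witnesses it over $P_2\subseteq P_1$, and the reverse inclusion is clear.

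Next I would pin down where $\mathcal N(P_2)$ can differ from $\mathcal N(P_1)$. For every $u\in\mathcal V(P_2)$ one has $N_u(P_2)\supseteq N_u(P_1)$, and if $c\in N_u(P_2)\setminus N_u(P_1)$ then the maximum of $\langle c,\cdot\rangle$ over $P_1$ is attained at the only vertex missing from $P_2$, namely $v$, so $c\in N_v(P_1)$. Hence $\mathcal N(P_1)$ and $\mathcal N(P_2)$ coincide off $N_v(P_1)$, and by Proposition~\ref{c_fanovi_sabiraka} (together with Remark~\ref{r_preskmaksimalnihnijemaksimalan}) it remains only to describe how the cones of $\mathcal N(P_2)$ subdivide the single cone $N_v(P_1)$.

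The crux — and the step I expect to be the main obstacle, and the one that really uses $P_2=conv(\mathcal V(P_1)-\{v\})$ rather than an arbitrary truncator summand — is the claim that for every $c$ in the interior of $N_v(P_1)$, the maximum of $\langle c,\cdot\rangle$ over $P_2$ is attained at a vertex adjacent to $v$. I would deduce this from the classical fact that for any polytope and any linear functional $\ell$ the vertices with $\ell\ge t$ span a connected subgraph of the polytope graph: applying it to $P_1$ with $\ell=\langle c,\cdot\rangle$ and $t=m$, where $m$ is the largest value of $\ell$ on $\mathcal V(P_1)-\{v\}$ (the strictly largest value being attained at $v$ alone, since $c$ is interior), the connected set $\{u\in\mathcal V(P_1):\langle c,u\rangle\ge m\}$ contains $v$, hence contains a neighbour $w_i$ of $v$ (otherwise $v$ would be isolated in it), and then $\langle c,w_i\rangle=m=\max_{P_2}\langle c,\cdot\rangle$. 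Granting this, for interior $c$ the $P_2$-maximum equals $\max_i\langle c,w_i\rangle$, so the maximal cone of $\mathcal N(P_1+P_2)$ over the locus where $w_i$ wins is $D_i=\{c\in N_v(P_1):\langle c,w_i\rangle\ge\langle c,w_j\rangle \text{ for all }j\}$. Passing to the linear coordinates $y_k=-\langle c,u_k\rangle$, the cone $N_v(P_1)$ becomes the positive orthant $\{y\ge 0\}$, the ray generated by $a_k$ becomes the $k$-th coordinate ray, and $\langle c,w_i\rangle=\langle c,v\rangle-y_i$; hence $D_i=\{y\ge 0:y_i\le y_j \text{ for all }j\}$, which is exactly the $i$-th maximal cone of the stellar subdivision of the orthant at its diagonal ray. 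Thus $\mathcal N(P_1+P_2)$ is obtained from $\mathcal N(P_1)$ by stellarly subdividing $N_v(P_1)$ at the interior ray $\rho$ determined by $\langle\rho,u_k\rangle=-1$ for all $k$.

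To finish, let $\mathrm{tr}_vP_1$ be the parallel truncation of $P_1$ at $v$ whose truncating hyperplane has outward normal in the direction $\rho$; this is legitimate because $\rho$ lies in the interior of $N_v(P_1)$, so $\langle\rho,\cdot\rangle$ attains its maximum over $P_1$ only at $v$. By Lemma~\ref{l_konusiparalelnetrunkacije}, together with the identification in its proof of the cones $N_{v_i}(\mathrm{tr}_vP_1)$ as those generated by $\rho$ and $\{a_j:j\ne i\}$, the fan $\mathcal N(\mathrm{tr}_vP_1)$ is $\mathcal N(P_1)$ with $N_v(P_1)$ replaced by precisely this stellar subdivision. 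Hence $\mathcal N(\mathrm{tr}_vP_1)=\mathcal N(P_1+P_2)$, i.e.\ $P_1+P_2\simeq\mathrm{tr}_vP_1$. The only remaining routine points are the connectivity fact quoted above and the bookkeeping matching the indices of the $a_j$ with the coordinate rays, which holds because $a_i$ is orthogonal to $u_j$ for every $j\ne i$ and pairs negatively with $u_i$.
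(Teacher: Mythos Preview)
Your proof is correct, but it takes a genuinely different route from the paper. The paper deliberately avoids normal fans here (and explicitly remarks afterwards that Proposition~\ref{c_fanovi_sabiraka} is not used): instead it computes $P_1+P_2$ directly via Proposition~\ref{prop_minkowski_svojstva1}. After translating so that $v=0$, one gets $P_1+P_2=conv(V\cup 2V)$ with $V=\mathcal V(P_1)-\{v\}$, and the specific truncation at midpoints of the edges through $v$ gives $2\,\mathrm{tr}_vP_1=conv(\{v_1,\dots,v_n\}\cup 2V)$. The whole proof then reduces to showing, by an elementary barycentric argument along the segments $\overline{vv_j}$, that each non-adjacent $v_j$ already lies in $conv(\{v_1,\dots,v_n\}\cup 2V)$, so the two sets coincide. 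This buys the paper a stronger conclusion than yours: an actual identity $P_1+P_2=2\,\mathrm{tr}_vP_1$, not just normal equivalence.

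Your fan argument, by contrast, is closer in spirit to the machinery used later for Proposition~\ref{l_deformacijajesingleprofinjenje}, and arguably more conceptual. The price is an external ingredient the paper never needs, namely the connectivity of upper level sets of linear functionals on the polytope graph. That fact is standard (increasing paths to the maximal face), and your use of it to force the $P_2$-maximum of an interior functional onto a neighbour of $v$ is exactly the right idea; the stellar-subdivision computation in the $y$-coordinates and the matching with Lemma~\ref{l_konusiparalelnetrunkacije} are clean. One minor remark: the identification of $a_k$ with the $k$-th coordinate ray uses that $u_1,\dots,u_n$ are linearly independent, which you have from simplicity of $v$, so the map $c\mapsto(-\langle c,u_k\rangle)_k$ really is a linear isomorphism on the ambient space; this is implicit in your final paragraph but worth making explicit.
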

\begin{proof} 
Let $V=\{v_1,\ldots,v_k\}$ be the set $\mathcal{V}(P_1)-\{v\}$. Since $v$ is contained in exactly $n$ facets, there are exactly $n$ vertices adjacent to $v$ in $P_1$. Let us suppose that $v$ and $v_i$ are adjacent in $P_1$ if and only if $i\in[n]$. 
Then, for every $i\in[n]$, let $w_i$ be the midpoint of the edge $\overline{v_iv}$. Since there exists the hyperplane which contains $w_i$ for every $i\in[n]$, a polytope $conv(\{w_1,\ldots,w_n\} \cup V)$ is a truncation of $P_1$ in $v$, which we denote by tr$_vP_1$. Hence,
\[
\text{tr}_vP_1=conv(\{\dfrac{v+v_1}{2},\ldots,\dfrac{v+v_n}{2}\} \cup V ).
\]

At the other side, from Proposition~\ref{prop_minkowski_svojstva1} and the distributivity low, we have the following equations:
\[\begin{array}{rl}
P_1+P_2= 
& conv(\{v\} \cup V)+conv V=conv\bigl((\{v\}\cup V)+V\bigr)
\\[0.7ex]
= & conv\bigl((\{v\}+V)\cup(V+V)\bigr)=conv\bigl((\{v\}+V)\cup 2V\bigr).
\end{array}\]
The last equation is obtained from the fact that the sum of two different points $v_i$ and $v_j$ is the midpoint of the line segment whose endpoints are $2v_i$ and $2v_j$.
 
By Corollary~\ref{prop_minkowski_svojstva2}(i), we may suppose that $v=0\in \mathbf{R}^n$ without lose of generality. It implies that 
$$P_1+P_2=conv\bigl(V\cup2V\bigr) \; \;\text{and} \; \; \text{tr}_vP_1=conv(\{\dfrac{v_1}{2},\ldots,\dfrac{v_n}{2}\} \cup V).$$
Therefore, $$2\text{tr}_vP_1=conv(\{v_1,\ldots,v_n\} \cup 2V)\subseteq conv(V \cup 2V)=P_1+P_2.$$
For every $j\in[k]-[n]$, we consider the line segment $L_j=\overline{v_jv}$. Since $v$ and $v_j$ are vertices of the polytope $P_1$, this line segment intersects the truncation hyperplane in the point $w$ which belongs to $conv\{w_1,\ldots,w_n\}$. Hence,
$$w=\sum\limits_{i=1}^{n} \alpha_iw_i=\sum\limits_{i=1}^{n} \alpha_i\dfrac{v+v_i}{2}=\sum\limits_{i=1}^{n} \alpha_i\dfrac{v_i}{2},$$ where $\sum\limits_{i=1}^{n} \alpha_i=1$ and $0\leqslant\alpha_i<1$ for every $i\in[n]$. If we suppose that the midpoint of $L_j$ belongs to the line segment $\overline{vw}$, then, since every $w_i$, $i \in [n]$, is the midpoint of the edge adjacent to $v$, we have that $v_j \in 2\overline{wv_j}$. This further implies that $v_j \in conv\{v,v_1,\ldots,v_n\}$, and thus, can not be a vertex of $P_1$. Therefore, the midpoint of $L_j$ belongs to the line segment $\overline{wv_j}$, i.e.\ $\dfrac{v_j}{2}\in conv\{w,v_j\}$. It means that there exist $0<\lambda_1,\lambda_2<1$ such that $\lambda_1+\lambda_2=1$ and \[
\dfrac{v_j}{2}=\lambda_1w+\lambda_2v_j=\lambda_1\sum\limits_{i=1}^{n} \alpha_i\dfrac{v_i}{2}+\lambda_2v_j
,\]
which entails that
\[
v_j=\sum\limits_{i=1}^{n} \lambda_1\alpha_iv_i+\lambda_22v_j
.\]
Since, $0<\lambda_1\alpha_i<1$ for every $i\in[n]$, and $$\lambda_2+\sum\limits_{i=1}^{n} \lambda_1\alpha_i=\lambda_2+\lambda_1\sum\limits_{i=1}^{n} \alpha_i=\lambda_2+\lambda_1=1,$$
we conclude that $v_j \in conv(\{v_1,\ldots,v_n\} \cup 2V)$, which implies $$conv(V \cup 2V)\subseteq conv(\{v_1,\ldots,v_n\} \cup 2V).$$ Hence, $P_1+P_2=2\text{tr}_vP_1$. It remains to apply Remark~\ref{r_lambdaP}.
\end{proof}

\begin{prop}\label{p_dovoljnomakskonuseposmatrati}
For $P,P_1,P_2 \in \mathcal{M}_n$,
$P_1+P_2 \simeq P$ holds if and only if the following two conditions are satisfied. 
\begin{enumerate}
\item[\emph{(i)}] Every maximal normal cone in $\mathcal{N}(P)$ is the intersection of two maximal normal cones in $\mathcal{N}(P_1)$ and $\mathcal{N}(P_2)$.
\item[\emph{(ii)}] If the intersection of two maximal normal cones in $\mathcal{N}(P_1)$ and $\mathcal{N}(P_2)$ is an $n$-cone, then it is a maximal normal cone in $\mathcal{N}(P)$.
\end{enumerate}
\end{prop}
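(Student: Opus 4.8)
The plan is to reduce the identity $\mathcal{N}(P_1+P_2)=\mathcal{N}(P)$ to a comparison of the top‑dimensional cones of the two fans, and then to recognise conditions (i) and (ii) as exactly the two inclusions of that comparison.

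First I would record that a polytopal normal fan is completely determined by its inclusion‑maximal cones. Indeed, for any polytope $Q\in\mathcal{M}_n$ the fan $\mathcal{N}(Q)$ is complete, its inclusion‑maximal members are precisely the $n$‑dimensional cones (the normal cones at the vertices of $Q$, which are always full‑dimensional), and by Remark~\ref{r_preskmaksimalnihnijemaksimalan}, for a face $F$ of $Q$ and a vertex $v\in F$ the smallest face containing $F$ and $\{v\}$ is $F$, so $N_F(Q)=N_F(Q)\cap N_v(Q)$ is a face of the maximal cone $N_v(Q)$. Hence $\mathcal{N}(Q)$ is nothing but the collection of all faces of its maximal cones, and therefore two polytopal fans coincide if and only if their maximal cones coincide. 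In particular $P_1+P_2\simeq P$ if and only if the set of maximal cones of $\mathcal{N}(P_1+P_2)$ equals the set of maximal cones of $\mathcal{N}(P)$.

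Next I would compute the maximal cones of $\mathcal{N}(P_1+P_2)$. By Proposition~\ref{c_fanovi_sabiraka} this fan is $\{N_1\cap N_2\mid N_1\in\mathcal{N}(P_1),\ N_2\in\mathcal{N}(P_2)\}$. Since $\dim(N_1\cap N_2)\leqslant\min\{\dim N_1,\dim N_2\}$, an intersection can be $n$‑dimensional only when $N_1$ and $N_2$ are both maximal, and (as in a fan $C\subsetneq C'$ forces $\dim C<\dim C'$) an $n$‑dimensional cone occurring in $\mathcal{N}(P_1+P_2)$ is automatically inclusion‑maximal there. So the maximal cones of $\mathcal{N}(P_1+P_2)$ are exactly the $n$‑dimensional cones of the form $N_1\cap N_2$ with $N_1\in\mathcal{N}(P_1)$ and $N_2\in\mathcal{N}(P_2)$ both maximal.

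Finally I would split the required set equality into its two inclusions. The inclusion ``every maximal cone of $\mathcal{N}(P)$ is an intersection $N_1\cap N_2$ of two maximal cones'' (the dimension condition being automatic, since that intersection is the given $n$‑cone) is precisely condition (i); the inclusion ``every $n$‑dimensional intersection of two maximal cones from $\mathcal{N}(P_1)$ and $\mathcal{N}(P_2)$ is a maximal cone of $\mathcal{N}(P)$'' is precisely condition (ii). This yields the proposition. I expect the only delicate point to be the first step --- that for complete polytopal fans it suffices to compare top‑dimensional cones --- but once Remark~\ref{r_preskmaksimalnihnijemaksimalan} is invoked this is routine, and everything afterwards is bookkeeping around Proposition~\ref{c_fanovi_sabiraka}.
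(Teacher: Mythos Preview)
Your proposal is correct and follows essentially the same approach as the paper: first reduce normal equivalence to equality of the sets of maximal (equivalently, $n$-dimensional) normal cones, then invoke Proposition~\ref{c_fanovi_sabiraka} to identify the maximal cones of $\mathcal{N}(P_1+P_2)$ with the full-dimensional intersections of maximal cones of the summands. The paper's own proof is considerably terser---after establishing the first reduction it simply writes ``It remains to apply Proposition~\ref{c_fanovi_sabiraka}''---whereas you spell out explicitly why the maximal cones of the common refinement are precisely the $n$-dimensional intersections of maximal cones and then match the two inclusions to conditions (i) and (ii); this extra bookkeeping is a genuine improvement in clarity but not a different argument.
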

\begin{proof}
%The other direction follows from the fact that the fans of both polytopes are complete. 
Suppose that the sets of maximal normal cones in the fans of two arbitrary polytopes are equal. Each normal cone in one of the fans is a face of some maximal normal cone in that fan. Then, by assumption, it is also a face of same maximal normal cone in the other fan. Hence, by Definition~\ref{d_normalconefan}, that cone is contained in both fans. This, together with Definition~\ref{d_norm_ekvivalentni politopi} and Definition~\ref{d_normalconefan}, implies that two polytopes are normally equivalent if and only if the sets of maximal normal cones in their fans are equal. 
%In particular,
%$$(\star)\quad \{N_v(P_1+P_2)\mid v \in \mathcal{V}(P_1+P_2)\}=\{N_v(P)\mid v \in \mathcal{V}(P)\}.$$ 
It remains to apply Proposition~\ref{c_fanovi_sabiraka}.
\end{proof} 

According to Remark~\ref{r_correspondingfacettruncation}, let $P_1\in \mathcal{M}_n$ be $d$-polytope defined as the intersection of the following $m$ facet-defining halfspaces
\[
\alpha_i^{\geqslant}: \langle a_i,x\rangle\geqslant b_i,  \;   \; 1\leqslant i\leqslant m, \]
and let tr$_FP_1$ be a parallel truncation of $P_1$ in its face $F$ defined as the intersection of the following $m+1$ facet-defining halfspaces
\[\alpha_i^{\geqslant}: \langle a_i,x\rangle\geqslant b_i,  \;   \; 0\leqslant i\leqslant m.\]    

\begin{dfn}\label{d_pideformacija}
Let $P_1$ and \emph{tr}$_FP_1$ be two polytopes defined as above. A polytope $P_2\in \mathcal{M}_n$ is an $F$-\emph{deformation}\footnote{This definition is inspired by {\cite[Definition~15.01]{PRW08},} which defines several types of deformation cones of a given polytope.} of $P_1$ when the following conditions are satisfied:
\begin{enumerate}
%\item [\emph{(i)}] $\dim(P_2)=\dim(P_1)$;
\item [\emph{(i)}] $P_2$ is the intersection of the halfspaces 
$$\pi_i^{\geqslant}: \langle a_i,x\rangle\geqslant c_i, \; 0\leqslant i\leqslant m, \text{ such that } P_2 \cap \pi_0\simeq P_1\cap \alpha_0;$$
%the hyperplane $\pi_0$ defines a $(d-1)$-face of $P_2$ which is normally equivalent to $P_1\cap \alpha_0$; 
%the halfspace $\pi_0^{\geqslant}$ is facet-defining;
\item [\emph{(ii)}]for every $S\subset\{0,\ldots,m\}$ 
\[\bigcap\{\alpha_i\mid i \in S\} \text{ is a vertex of } \emph{tr}_FP_1 \Rightarrow\bigcap\{\pi_i\mid i \in S\} \text{ is a vertex of } P_2.\]
\end{enumerate}
\end{dfn}

\begin{rem}\label{r_deformacijagruboreceno}
The condition \emph{(ii)} together with the first part of condition \emph{(i)} means that $P_2$ can be obtained from tr$_FP_1$ by parallel translations of the facets without, roughly speaking, crossing over the vertices \footnote{``...by moving the vertices such that directions of all edges are preserved (and some edges may accidentally degenerate into a single point).''\cite{P09}}. If $f$ is the facet of \emph{tr}$_FP_1$ contained in the truncation hyperplane, then the second part of the condition \emph{(i)} implies that $d-1\leqslant \dim(P_2)\leqslant d$, and that $\pi_0$ is a supporting hyperplane for $P_2$ defining a $(d-1)$-face normally equivalent to $f$. If $\dim(P_2)=d-1$, that face is $P_2$ itself.
\end{rem}
%\begin{rem}\label{r_normalnoekvivalentandeformaciji}
%If $P_1 \simeq Q_1$, $P_2\simeq Q_2$ and $P_2$ is an $F$-de\-for\-ma\-tion of $P_1$, then $Q_2$ is an $F$-deformation of $Q_1$.  
%\end{rem}
\begin{rem}\label{r_korespondingdeformacijateme}
We say that a vertex $v$ of an $F$-deformation of $P_1$ corresponds to some vertex $u$ of $\emph{tr}_FP_1$ if $v$ corresponds to $u$ according to \emph{Definition~\ref{d_pideformacija}(ii)}. 
\end{rem}
\begin{exm}\label{e_trunkacijajedeformacija}
Every parallel truncation \emph{tr}$_FP$ of an arbitrary polytope $P$ is an $F$-deforma\-tion of $P$.
\end{exm}

\begin{exm}
The triangle $ABC$ is a $D$-deformation of the trapezoid $ABCD$ illustrated in Figure~\ref{s:skica_12}.
Figures~\ref{s:skica_a4}, \ref{s:skica_a2} and \ref{s:skica_a3} depict some 3-nestohedra and their deformations.
\end{exm}

\begin{lem}\label{l_zvezda}
Let $P_1$ and \emph{tr}$_FP_1$ be two polytopes defined as above. If $P_2$ is an $F$-deformation of $P_1$, then the following claims hold.

\begin{enumerate}

\item[\emph{(i)}] For $v$ being a vertex of $\emph{tr}_FP_1$ and $u_2$ being its corresponding vertex of $P_2$, we have that $N_v(\emph{tr}_FP_1)\subseteq N_{u_2}(P_2)$.

\item[\emph{(ii)}] Every maximal normal cone in $\mathcal{N}(P_2)$ is the union of some maximal normal cones in $\mathcal{N}(\emph{tr}_FP_1)$.

\item[\emph{(iii)}] Every maximal normal cone in $\mathcal{N}(P_2)$ contains no more than one maximal normal cone to $\emph{tr}_FP_1$ at some vertex contained in the truncation hyperplane.
\end{enumerate}
\end{lem}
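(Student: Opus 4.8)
The plan is to prove \emph{(i)} directly from the definitions and then to deduce \emph{(ii)} and \emph{(iii)} from \emph{(i)} together with completeness of normal fans, the argument for \emph{(iii)} using in addition the normal equivalence required by Definition~\ref{d_pideformacija}\emph{(i)}. For \emph{(i)}, let $S\subseteq\{0,\dots,m\}$ be the set of indices of the facets of \emph{tr}$_FP_1$ through $v$, so that $v=\bigcap_{i\in S}\alpha_i$ and, by Definition~\ref{d_pideformacija}\emph{(ii)}, the corresponding vertex of $P_2$ is $u_2=\bigcap_{i\in S}\pi_i$. First I would observe that $N_v(\emph{tr}_FP_1)$ is the conical hull of the outward normals $\{-a_i : i\in S\}$ of the facets through $v$. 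Then, for each $i\in S$, from $\langle a_i,u_2\rangle=c_i$ together with $\langle a_i,x\rangle\geqslant c_i$ for all $x\in P_2$, the functional $-a_i$ attains its maximum over $P_2$ at $u_2$, i.e.\ $-a_i\in N_{u_2}(P_2)$ by Definition~\ref{d_normalconefan}; since $N_{u_2}(P_2)$ is a convex cone it then contains the conical hull of the $-a_i$'s, which is $N_v(\emph{tr}_FP_1)$.

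For \emph{(ii)}, fix a vertex $u_2$ of $P_2$ and put $C=N_{u_2}(P_2)$, an $n$-cone. For every vertex $w$ of \emph{tr}$_FP_1$, part \emph{(i)} places the $n$-cone $N_w(\emph{tr}_FP_1)$ inside the maximal normal cone of $P_2$ at the vertex corresponding to $w$; since two distinct maximal cones of a complete fan meet only along a common proper face (Remark~\ref{r_preskmaksimalnihnijemaksimalan}), this is the \emph{unique} maximal cone of $\mathcal{N}(P_2)$ containing $N_w(\emph{tr}_FP_1)$, so $N_w(\emph{tr}_FP_1)$ can meet the interior of $C$ only if $N_w(\emph{tr}_FP_1)\subseteq C$. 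Because $\mathcal{N}(\emph{tr}_FP_1)$ is complete, the interior of $C$ is covered by the finitely many maximal cones $N_w(\emph{tr}_FP_1)$ that it meets, each of which therefore lies in $C$; passing to closures yields $C=\bigcup\{N_w(\emph{tr}_FP_1) : N_w(\emph{tr}_FP_1)\subseteq C\}$, a union of maximal cones of $\mathcal{N}(\emph{tr}_FP_1)$.

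For \emph{(iii)}, write $f=\emph{tr}_FP_1\cap\alpha_0$ for the new facet and $f'=P_2\cap\pi_0$. Since $f=P_1\cap\alpha_0$ (as $\emph{tr}_FP_1=P_1\cap\alpha_0^{\geqslant}$), Definition~\ref{d_pideformacija}\emph{(i)} gives $f\simeq f'$, hence $\mathcal{N}(f)=\mathcal{N}(f')$ as complete fans in $\mathbf{R}^{n}/\mathbf{R}a_0$ (the hyperplanes $\alpha_0$ and $\pi_0$ being parallel), so there is a bijection $\beta$ from the vertices of $f$ to those of $f'$ with $N_w(f)=N_{\beta(w)}(f')$. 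Now take a vertex $w$ of \emph{tr}$_FP_1$ lying on $\alpha_0$ and let $u_2$ be its corresponding vertex of $P_2$; by the uniqueness established in \emph{(ii)}, $N_{u_2}(P_2)$ is the only maximal cone of $\mathcal{N}(P_2)$ containing $N_w(\emph{tr}_FP_1)$, and $u_2$ lies on $\pi_0$ (as $0$ is among the active indices), hence is a vertex of $f'$. I would then invoke the standard fact that for a face $G$ contained in the section of a polytope by a supporting hyperplane, the normal cone of $G$ in that section is $N_G(\cdot)$ enlarged by the line spanned by the hyperplane's normal; applied to $w\subseteq f\subseteq\emph{tr}_FP_1$ and to $u_2\subseteq f'\subseteq P_2$, part \emph{(i)} upgrades to
\[
N_w(f)=N_w(\emph{tr}_FP_1)+\mathbf{R}a_0\ \subseteq\ N_{u_2}(P_2)+\mathbf{R}a_0=N_{u_2}(f').
\]
Both sides are maximal cones of the complete fan $\mathcal{N}(f)=\mathcal{N}(f')$, so the inclusion is an equality and $u_2=\beta(w)$. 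Thus the correspondence $w\mapsto u_2$, restricted to the vertices of $f$, coincides with the injective map $\beta$, which is exactly claim \emph{(iii)}.

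The step I expect to be the main obstacle is \emph{(iii)}: the correspondence in Definition~\ref{d_pideformacija}\emph{(ii)} is controlled only in one direction, so the non-collapsing of distinct vertices lying on the new facet must be imported from the normal equivalence $P_2\cap\pi_0\simeq P_1\cap\alpha_0$ of Definition~\ref{d_pideformacija}\emph{(i)}; the technical heart is to identify the purely combinatorial bijection $\beta$ with the set-theoretic correspondence $w\mapsto\bigcap_{i\in S}\pi_i$, which forces one to pass carefully between the normal cones of a polytope and those of one of its facets (Remark~\ref{r_deformacijagruboreceno} being the geometric picture behind this identification).
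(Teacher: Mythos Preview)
Your arguments for \emph{(i)} and \emph{(ii)} are essentially the paper's: the paper also obtains \emph{(i)} by noting that $\{-a_i:i\in S\}$ spans $N_v(\text{tr}_FP_1)$ and that each $-a_i$ is maximised at $u_2$ over $P_2$, and it deduces \emph{(ii)} from \emph{(i)} together with completeness of both fans (your version simply makes the uniqueness-of-containing-cone step explicit, which the paper leaves to the reader).

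For \emph{(iii)} you take a genuinely different route. The paper argues by counting: since $P_2\cap\pi_0$ is an $(n-1)$-face of $P_2$, the ray $-a_0$ is an extreme ray precisely of the maximal cones of $\mathcal{N}(P_2)$ at vertices of $f'=P_2\cap\pi_0$; by \emph{(ii)} each such cone, being a union of maximal cones of $\mathcal{N}(\text{tr}_FP_1)$, must contain at least one cone at a vertex of $f=\text{tr}_FP_1\cap\alpha_0$, and the normal equivalence $f\simeq f'$ gives $|\mathcal{V}(f)|=|\mathcal{V}(f')|$, so pigeonhole forces ``exactly one''. Your approach instead identifies the correspondence $w\mapsto u_2$ on vertices of $f$ with the bijection $\beta$ coming from $\mathcal{N}(f)=\mathcal{N}(f')$, via the standard relation $N_G(\text{facet})=N_G(\text{polytope})+\mathbf{R}a_0$. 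Both are correct; the paper's counting is shorter, while yours yields the extra information that the vertex correspondence restricted to the new facet is precisely the combinatorial isomorphism implied by $f\simeq f'$ --- which is in the spirit of Remark~\ref{r_deformacijagruboreceno}, as you anticipated.
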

\begin{proof}
Without lose of generality, suppose that $P_1$ is full dimensional.

\noindent (i): Let $v$ be contained in the facets defined by the halfspaces $\{\alpha_i^{\geqslant} \mid i\in S\}$. Then, the set of rays $\{-a_i \mid i \in S\}$ spans $N_v(\text{tr}_FP_1)$, and $u_2$ is the intersection of the hyperplanes $\pi_i$, $i \in S$. Therefore, for every $i \in S$ the functional $-a_i$ attains the maximum value $-c_i$ at $u_2$ over all points in $P_2$, which implies that $-a_i$ is in the cone $N_{u_2}(P_2)$. Since all these rays are spanning rays of $N_v(\text{tr}_FP_1)$, the claim holds. 

\noindent(ii):  By the previous claim, for every maximal normal cone $N\in \mathcal{N}(\text{tr}_FP_1)$ there is a maximal normal cone in $\mathcal{N}(P_2)$ in which $N$ is contained. Since $\mathcal{N}(\text{tr}_FP_1)$ and $\mathcal{N}(P_2)$ are complete, the claim holds.  

\noindent(iii): By Definition~\ref{d_pideformacija}(i), $P_2 \cap \pi_0$ is an $(n-1)$-face of $P_2$, and hance, the ray $-a_0$ is a spanning ray just of those normal cones to $P_2$ that correspond to the vertices of that face. By the claim (ii), each of them contains at least one normal cone to tr$_FP_1$ at some vertex contained in $\alpha_0$.  
Then, since $P_2 \cap \pi_0 \sim \text{tr}_FP_1 \cap \alpha_0$, the claim follows directly from the equation $\lvert\mathcal{V}(P_2 \cap \pi_0)\rvert=\lvert\mathcal{V}(\text{tr}_FP_1 \cap \alpha_0)\rvert$.

\end{proof}

\begin{prop}\label{l_deformacijajesingleprofinjenje}
Let $P_1$ and \emph{tr}$_FP_1$ be two polytopes defined as above. If $P_2$ is an $F$-deformation of $P_1$, then
%\emph{tr}$_FP_2=P_2\cap \pi^{\geqslant}$,
$$P_1+P_2 \simeq \emph{tr}_FP_1.$$
\end{prop}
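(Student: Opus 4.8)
The plan is to apply Proposition~\ref{p_dovoljnomakskonuseposmatrati} with $P=\text{tr}_FP_1$: since the fan of a Minkowski sum is the common refinement of the summands' fans (Proposition~\ref{c_fanovi_sabiraka}), it suffices to check (i) that every maximal cone of $\mathcal{N}(\text{tr}_FP_1)$ is the intersection of a maximal cone of $\mathcal{N}(P_1)$ with a maximal cone of $\mathcal{N}(P_2)$, and (ii) that any such intersection which is an $n$-cone is itself a maximal cone of $\mathcal{N}(\text{tr}_FP_1)$. As in the proof of Lemma~\ref{l_zvezda}, I assume $P_1$ full-dimensional. The crucial preliminary is that every maximal cone of $\mathcal{N}(P_1)$ and of $\mathcal{N}(P_2)$ is a union of maximal cones of the finer fan $\mathcal{N}(\text{tr}_FP_1)$: for a vertex $w\notin F$ one has $N_w(P_1)=N_w(\text{tr}_FP_1)$; for a vertex $u\in F$, Lemma~\ref{l_konusiparalelnetrunkacije} gives $N_u(P_1)=\bigcup_i N_{v_i}(\text{tr}_FP_1)$, where the $v_i$ are the new vertices of $\text{tr}_FP_1$ on the edges of $P_1$ issuing from $u$, each lying on the hyperplane $\alpha_0$ of the new facet; and Lemma~\ref{l_zvezda}(ii) supplies the statement for $\mathcal{N}(P_2)$. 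Consequently the intersection of a maximal cone of $\mathcal{N}(P_1)$ with a maximal cone of $\mathcal{N}(P_2)$ is a convex union of cones of $\mathcal{N}(\text{tr}_FP_1)$; if it is $n$-dimensional it contains some maximal cone of $\mathcal{N}(\text{tr}_FP_1)$ outright, and then, by convexity, it equals the union of all maximal cones of $\mathcal{N}(\text{tr}_FP_1)$ it contains.

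For (i), let $v$ be a vertex of $\text{tr}_FP_1$ and $u_2$ its corresponding vertex of $P_2$ (Remark~\ref{r_korespondingdeformacijateme}), so $N_v(\text{tr}_FP_1)\subseteq N_{u_2}(P_2)$ by Lemma~\ref{l_zvezda}(i). If $v\notin\alpha_0$, then $v$ is a vertex of $P_1$ with $N_v(\text{tr}_FP_1)=N_v(P_1)$, hence $N_v(\text{tr}_FP_1)=N_v(P_1)\cap N_{u_2}(P_2)$. If $v\in\alpha_0$, then $v$ lies on a unique edge $\overline{uw}$ of $P_1$ with $u$ a vertex of $F$ and $w\notin F$, and Lemma~\ref{l_konusiparalelnetrunkacije} gives $N_v(\text{tr}_FP_1)\subseteq N_u(P_1)$; thus $N_v(\text{tr}_FP_1)\subseteq N_u(P_1)\cap N_{u_2}(P_2)$. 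To get equality, note that the $n$-dimensional cones of $\mathcal{N}(\text{tr}_FP_1)$ inside $N_u(P_1)\cap N_{u_2}(P_2)$ are exactly those $N_{v_i}(\text{tr}_FP_1)$ contained in $N_{u_2}(P_2)$; as every $v_i$ lies on $\alpha_0$, Lemma~\ref{l_zvezda}(iii) permits at most one, and that one is $N_v(\text{tr}_FP_1)$. By the convexity remark above, $N_u(P_1)\cap N_{u_2}(P_2)=N_v(\text{tr}_FP_1)$, proving (i).

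For (ii), let $N_1=N_w(P_1)$ and $N_2=N_{u_2}(P_2)$ with $N_1\cap N_2$ an $n$-cone. If $w\notin F$, then $N_1=N_w(\text{tr}_FP_1)$ is a single maximal cone of $\mathcal{N}(\text{tr}_FP_1)$; since $N_2$ is a union of maximal cones of $\mathcal{N}(\text{tr}_FP_1)$, an $n$-dimensional intersection forces $N_1\subseteq N_2$, so $N_1\cap N_2=N_1$. If $w\in F$, then $N_1=\bigcup_i N_{v_i}(\text{tr}_FP_1)$ with all $v_i$ on $\alpha_0$; the intersection must contain some $N_{v_{i_0}}(\text{tr}_FP_1)$ lying in $N_2$, Lemma~\ref{l_zvezda}(iii) shows only one does, and convexity gives $N_1\cap N_2=N_{v_{i_0}}(\text{tr}_FP_1)$. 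In either case the intersection is a maximal cone of $\mathcal{N}(\text{tr}_FP_1)$, so (ii) holds, and Proposition~\ref{p_dovoljnomakskonuseposmatrati} yields $P_1+P_2\simeq\text{tr}_FP_1$.

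The step I expect to demand the most care is turning these cone inclusions into equalities, i.e.\ pinning each relevant intersection down to a single maximal cone of $\mathcal{N}(\text{tr}_FP_1)$: this is exactly where the $F$-deformation hypothesis enters, through Lemma~\ref{l_zvezda}(iii), which forbids a maximal cone of $\mathcal{N}(P_2)$ from absorbing two of the new vertices' cones. Making this rigorous also relies on the small convexity fact invoked above — a convex set that is a union of cones of a fan equals the union of the full-dimensional cones it contains, since a larger convex hull would meet a further maximal cone in full dimension and hence engulf it — and one should also confirm that the dichotomy $v\in\alpha_0$ versus $v\notin\alpha_0$ genuinely exhausts the vertices of $\text{tr}_FP_1$. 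The remainder is routine fan bookkeeping.
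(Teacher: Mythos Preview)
Your proposal is correct and follows essentially the same approach as the paper: reduce to Proposition~\ref{p_dovoljnomakskonuseposmatrati}, split according to whether the relevant vertex lies on the truncation hyperplane~$\alpha_0$, and use Lemma~\ref{l_konusiparalelnetrunkacije} for the $P_1$-side and Lemma~\ref{l_zvezda}(i)--(iii) for the $P_2$-side. The only packaging difference is that you isolate the general principle ``an $n$-dimensional intersection of two unions of maximal cones of a common refining fan is exactly the union of the maximal cones it contains'' and invoke it uniformly, whereas the paper argues each equality by a small contradiction via Remark~\ref{r_preskmaksimalnihnijemaksimalan}; the content is the same.
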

\begin{proof}
Without lose of generality, we suppose that $P_1$ is full dimensional and show the claim according to Proposition~\ref{p_dovoljnomakskonuseposmatrati}.
%it is enough to show that
%$$(\star)\quad \{N_v(P_1+P_2)\mid v \in \mathcal{V}(P_1+P_2)\}=\{N_v(\text{tr}_FP_1)\mid v \in \mathcal{V}(\text{tr}_FP_1)\}.$$
%Firstly, we show that the set at the right-hand side of the equation $(\star)$ is a subset of the set at its left-hand side. 

Let $N_v$ be a normal cone to $\text{tr}_FP_1$ at a vertex $v$. The goal is to find two maximal normal cones $N_1 \in \mathcal{N}(P_1)$ and $N_2 \in \mathcal{N}(P_2)$ such that $N_v=N_1 \cap N_2$. Let $u_2$ be a vertex of $P_2$ which corresponds to $v$ according to Remark~\ref{r_korespondingdeformacijateme}. Lemma~\ref{l_zvezda}(i) guarantees that $N_v \subseteq N_{u_2}(P_2)$. 
%Suppose that $v$ is the intersection of the hyperplanes $\{\alpha_i \mid i\in S\}$, i.e.\ $N_v$ is spanned by the rays $-a_i$, where  $i\in S$. By Definition~\ref{d_pideformacija}, there is a vertex $v_2$ of $P_2$ which is the intersection of the hyperplanes $\{\pi_i \mid i \in S\}$. 
If $-a_0$ is not a spanning ray of $N_v$, then $v$ is also a vertex of $P_1$, i.e.\ $N_v=N_v(P_1)$. Then, $N_v$ is the intersection of the maximal normal cones $N_v(P_1)$ and $N_{u_2}(P_2)$. Otherwise, i.e.\ if $v$ belongs to the truncation hyperplane $\alpha_0$, then there is an edge $E$ of $P_1$ which has a common vertex with $F$ intersecting $\alpha_0$ in $v$ .  
Let $u_1$ be that vertex, i.e.\ $u_1=E\cap F$.
By Lemma~\ref{l_konusiparalelnetrunkacije}, $N_v\subseteq N_{u_1}(P_1)$, and hence, $N_v \subseteq N_{u_1}(P_1)\cap N_{u_2}(P_2)$. Now, there are two possible cases. If $N_{u_2}(P_2)=N_v$, then $N_v$ is the intersection of $N_{u_1}(P_1)$ and $N_{u_2}(P_2)$. Otherwise, by Lemma~\ref{l_zvezda}(ii) and (iii), $N_{u_2}(P_2)=N_v \cup N$, where $N$ is the union of some maximal normal cones in $\mathcal{N}(\text{tr}_FP_1)$ such that each of them corresponds to some vertex not contained in the truncation hyperplane, i.e.\ to some vertex of $P_1$ not contained in $F$. If we suppose that $N_v \subset N_{u_1}(P_1)\cap N_{u_2}(P_2)$, then there is a maximal normal cone in $\mathcal{N}(P_1)$ which is contained in $N$ and whose intersection with $ N_{u_1}$ is an $n$-cone. This is contradiction since they are maximal normal cones in the same fan (see Remark~\ref{r_preskmaksimalnihnijemaksimalan}), and hence, $N_v=N_{u_1}(P_1)\cap N_{u_2}(P_2)$. We conclude that the first condition of Proposition~\ref{p_dovoljnomakskonuseposmatrati} is satisfied.

%If there would be an interior ray in $N$ is the union of maximal cones in the same $N_{u_1}$ are , we obtain that the intersection of $N_{u_1}$ and $N_{u_2}$ is again $N_v$.  

%Suppose that there is a ray $r$ which is contained in $N_{u_1}(P_1)$ and $N_{u_2}(P_2)$ but not contained in $N_v$. Then, $r$ is contained in $N-N_v$, Since $\mathcal{N}(P_1)$ is complete, $r$ is not in $N_{v_1}(P_1)$. Therefore, $N_v \supseteq N_{u_1}(P_1)\cap N_{u_2}(P_2)$ also holds.

Let $N_{u_1}$ and $N_{u_2}$ be two normal cones to $P_1$ and $P_2$ at a vertex $u_1$ and $u_2$, respectively. The goal is to show that if their intersection is a maximal cone, then it is a maximal cone in $\mathcal{N}(\text{tr}_FP_1)$.
If $u_1\notin F$, then $N_{u_1} =N_{u_1}(\text{tr}_FP_1)$. By Lemma~\ref{l_zvezda}(i), $N_{u_2}$ is the union of some maximal cones in $\mathcal{N}(\text{tr}_FP_1)$, and thus, the intersection of $N_{u_1}$ and $N_{u_2}$ is a maximal cone if and only if $N_{u_1}\subseteq N_{u_2}$. In that case, their intersection is exactly $N_{u_1}$, a maximal cone in $\mathcal{N}(\text{tr}_FP_1)$. 
Now, let $u_1$ be a vertex contained in $F$. By Lemma~\ref{l_zvezda}(iii), we have two possible cases for $N_{u_2}$. If all of the maximal normal cones that are contained in $N_{u_2}$ correspond to vertices of $\text{tr}_FP_1$ not contained in the truncation hyperplane, then all of them are maximal normal cones to $P_1$ at vertices that do not belong to $F$. Therefore, according to Remark~\ref{r_preskmaksimalnihnijemaksimalan}, the intersection of $N_{u_1}$ and the union of such cones is not an $n$-cone. Otherwise, there is exactly one vertex $v$ of tr$_FP_1$ contained in the truncation hyperplane, such that $N_v(\text{tr}_FP_1)\subseteq N_{u_2}$. According to Lemma~\ref{l_konusiparalelnetrunkacije} and Remark~\ref{r_preskmaksimalnihnijemaksimalan}, the intersection of $N_{u_1}$ and $N_{u_2}$ is an $n$-cone if and only if there is an edge of $P_1$ containing both $u_1$ and $v$. When it is a case, their intersection is exactly $N_v(\text{tr}_FP_1)$. We conclude that the second condition of Proposition~\ref{p_dovoljnomakskonuseposmatrati} is also satisfied.
\end{proof}

If $P_1$ is simple polytope with a vertex $v$, then $conv(\mathcal{V}(P_1)-\{v\})$ is a $v$-de\-for\-mation of $P_1$. It means that Proposition~\ref{p_trunkacijatemenaprostog} is just a special case of the previous one. However, the methods used in theirs proofs are essentially different (note that Proposition~\ref{c_fanovi_sabiraka} is not even used in the proof of Proposition~\ref{p_trunkacijatemenaprostog}). 

Now, in order to answer Question~\ref{q_pitanje}, we present a polytope $PA_{n,1}$, and furthermore, a family of $n$-polytopes $PA_{n,c}$, where $c\in (0,1]$.
Let $\{\mathcal{A}_1,\mathcal{A}_2\}$  be a partition of $\mathcal{B}_1$ such that the block $\mathcal{A}_1$ is the collection of all the singletons, i.e.
\[
\mathcal{A}_1=\bigl\{\{\{i_{1+l}, \ldots, i_1\}\} \mid 0\leqslant l \leqslant n-1\bigr\} \text{ and } \mathcal{A}_2=\mathcal{B}_1-\mathcal{A}_1,
\]
where $i_1,\ldots,i_n$ are mutually distinct elements of $[n+1]$.
For the sequel, let 
\[
\beta=\bigl\{\{i_{k+l},\ldots,i_k,\ldots,i_1\},\ldots,\{i_{k+l},\ldots,i_k,i_{k-1}\},\{i_{k+l},\ldots,i_k\}
\bigr\},
\]
be an element of $\mathcal{A}_2$, where $1 < k\leqslant k+l\leqslant n$.
Let $$\beta_{min}=\{i_{k+l},\ldots,i_k\}, \quad \beta_{max}=\{i_{k+l},\ldots,i_k,\ldots,i_1\} \quad \text{and}$$ 
\[
\mathcal{B}_\beta= \bigl\{B \subseteq [n+1]\mid B \in \beta \text{ or } B\subset \beta_{min}
 \text{ or } \beta_{max}\subset B \bigl\} \cup \bigl\{\{v\} \mid v \in [n+1]\bigr\}.
\]

\begin{lem} \label{l_bildingzanest}
The set $\mathcal{B}_\beta-\{[n+1]\} $ is a building set of $\mathcal{P}([n+1])$.
\end{lem}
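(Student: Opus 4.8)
I read the claim as asserting that $\mathcal{B}_\beta$ satisfies the two conditions of a building set of $\mathcal{P}([n+1])$ in the sense of Definition~\ref{d_bilding_skup} --- it already contains $[n+1]$, since $\beta_{max}\subsetneq[n+1]$, so $\mathcal{B}_\beta-\{[n+1]\}$ refers to this building set with its maximal element deleted, the form in which it is used to build nested sets (cf.\ Remark~\ref{r_postnikov_i_nas_nested}). The plan is therefore to check, straight from Definition~\ref{d_bilding_skup}, that $\mathcal{B}_\beta$ contains every singleton $\{v\}$, $v\in[n+1]$, and is closed under unions of intersecting pairs.

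The singleton condition is built in, as $\{\{v\}\mid v\in[n+1]\}$ is one of the two blocks forming $\mathcal{B}_\beta$. For closure under unions, the organizing observation is that the three clauses defining $\mathcal{B}_\beta$ carve out strata totally ordered by inclusion: the \emph{low} sets $\mathcal{L}=\{B\mid\emptyset\neq B\subseteq\beta_{min}\}$ (the clause $B\subset\beta_{min}$ together with $\beta_{min}\in\beta$), the chain $\beta=\{I_1\subsetneq\dots\subsetneq I_k\}$ with $I_1=\beta_{min}$ and $I_k=\beta_{max}$, and the \emph{high} sets $\mathcal{H}=\{B\mid\beta_{max}\subseteq B\subseteq[n+1]\}$ (the clause $\beta_{max}\subset B$ together with $\beta_{max}\in\beta$). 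Every member of $\mathcal{L}$ is contained in every member of $\beta$, and every member of $\beta$ in every member of $\mathcal{H}$; the only further members of $\mathcal{B}_\beta$ are singletons, each inclusion-minimal among the members containing its point.

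I would then dispose of the union condition by a case analysis on the strata of two intersecting members $S_1,S_2$. If, up to interchange, $S_1$ lies in a strictly lower stratum than $S_2$, or $S_1$ is one of the extra singletons (so that $S_1\subseteq S_2$, using $S_1\cap S_2\neq\emptyset$ in the latter case), then $S_1\cup S_2=S_2$ is again a member. If $S_1,S_2\in\beta$, the union is the larger of the two; if $S_1,S_2\in\mathcal{L}$, it is again contained in $\beta_{min}$, so it lies in $\mathcal{L}$ (or equals $\beta_{min}=I_1\in\beta$); if $S_1,S_2\in\mathcal{H}$, it still contains $\beta_{max}$, hence lies in $\mathcal{H}$ (possibly equal to $[n+1]$, which is why $[n+1]$ must be counted as a member). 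Collecting the cases gives closure under intersecting unions, and with the singleton condition this proves the statement; the restriction of $\mathcal{B}_\beta-\{[n+1]\}$ to each facet $[n+1]-\{j\}$ of $\Delta^n$ is then also a building set by the same reasoning, since a union formed inside such a facet can never equal $[n+1]$.

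The argument is essentially bookkeeping, so I do not expect a genuine obstacle. The only places needing care are the degenerate configurations where strata collapse or overlap --- for instance $l=0$, where $\beta_{min}=I_1$ is a singleton and $\mathcal{L}$ is absorbed into $\beta$, or $k=2$, where $\beta$ has only two elements --- and the bookkeeping at the top of the \emph{high} stratum, i.e.\ the role of $[n+1]$ and its removal.
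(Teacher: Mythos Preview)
Your argument is correct and takes essentially the same route as the paper: after disposing of singletons, observe that two incomparable non-singleton members of $\mathcal{B}_\beta$ must both lie below $\beta_{min}$ or both above $\beta_{max}$, whence their union stays in $\mathcal{B}_\beta$. Your explicit handling of the role of $[n+1]$ and of the restriction to the facets of $\Delta^n$ is in fact more careful than the paper's own proof, which does not address the possibility $S_1\cup S_2=[n+1]$.
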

\begin{proof}
Let $B_1$ and $B_2$ be two distinct elements of $\mathcal{B}_\beta-\{[n+1]\}$ such that $B_1\cap B_2 \neq \emptyset$. Hence, they are not singletons. If they are comparable, then their union belongs to $\mathcal{B}_\beta -\{[n+1]\}$. Otherwise, since $\beta_{min}\subset \beta_{max} $, we have that $B_1,B_2\supset \beta_{max}$  or  $B_1,B_2\subset \beta_{min}$. It follows that $\beta_{max} \subseteq B_1\cap B_2 \subset B_1\cup B_2$ or $\beta_{min} \supseteq B_1\cup B_2$. Hence, $\mathcal{B}_\beta-\{[n+1]\}$ is a building set of $\mathcal{P}([n+1])$ according to Definition~\ref{d_bilding_skup}.
\end{proof}

By the previous lemma and Definition~\ref{d_bilding_skup}, $\mathcal{B}_\beta-\{[n+1]\} $ is a building set of the simplicial complex $C_0$. The family of all nested sets with respect to this building set forms a simplicial complex, which we denote by $C_2$.

\begin{prop} \label{p_nestoedarglavni}
The nestohedron $P_{\mathcal{B}_\beta }$
is an $n$-dimensional Minkowski-realisation of $C_2$.
\end{prop}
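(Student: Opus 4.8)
The plan is to read $P_{\mathcal{B}_\beta}=\sum_{B\in\mathcal{B}_\beta}\Delta_B$ as the Postnikov nestohedron of the building set $\mathcal{B}_\beta$ and to invoke the construction recalled at the end of Section~3: for any connected building set $\mathcal{B}$ of $[n+1]$ (that is, one with $[n+1]\in\mathcal{B}$), the polytope $\sum_{B\in\mathcal{B}}\Delta_B$ is an $n$-dimensional Minkowski-realisation, in the precise sense of Definition~\ref{d_Mink_realizacija}, of the simplicial complex of all nested sets with respect to the building set $\mathcal{B}-\{[n+1]\}$ of $C_0$. So everything reduces to checking that $\mathcal{B}_\beta$ is such a building set and then matching up the data of Definition~\ref{d_Mink_realizacija}.

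For the first point, by Lemma~\ref{l_bildingzanest} the set $\mathcal{B}_\beta-\{[n+1]\}$ is a building set of $\mathcal{P}([n+1])$; since $\beta_{max}\subset[n+1]$, the defining clause ``$\beta_{max}\subset B$'' is satisfied by $B=[n+1]$, whence $[n+1]\in\mathcal{B}_\beta$; and adjoining $[n+1]$ to a building set preserves the union axiom, because any union of two intersecting sets one of which is $[n+1]$ is again $[n+1]$. Thus $\mathcal{B}_\beta$ is a connected building set, and $\mathcal{B}_\beta-\{[n+1]\}$ is, by the definition given just before this proposition, the building set of $C_0$ whose complex of nested sets is $C_2$. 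For the second point, take $K=C_2$, $\varphi(B):=\Delta_B$, $\mathcal{A}_1=\{\{v\}\mid v\in[n+1]\}$ and $\mathcal{A}_2=\{B\in\mathcal{B}_\beta\mid 2\leqslant\lvert B\rvert\leqslant n\}$. Then condition (ii) is the identity $\Delta_{[n+1]}+\sum_{B\in\mathcal{B}_\beta-\{[n+1]\}}\Delta_B=\sum_{B\in\mathcal{B}_\beta}\Delta_B=P_{\mathcal{B}_\beta}$; condition (i) is Proposition~\ref{p_relaizacijaCo} (equivalently \cite[Theorem~7.4]{P09}) applied to the building set $\mathcal{B}_\beta-\{[n+1]\}$ of $C_0$; and for condition (iii) the partial sum $\Delta_{[n+1]}+\sum_{v\in[n+1]}\Delta_{\{v\}}$ is the translate of $\Delta_{[n+1]}$ by $(1,\ldots,1)$, for which $\{\Delta_{x(i)}\}_{i\in[m]}$ is a truncator set of summands whenever $x\colon[m]\to\mathcal{A}_2$ lists its values in non-increasing order of cardinality. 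This last fact is exactly the statement recalled from Postnikov's theorem in Section~3, since ordering $\mathcal{A}_2$ by non-increasing cardinality is ordering the faces $\Delta_B$ of the simplex by non-increasing dimension, which is the order of the successive truncations. With (i)--(iii) in hand, Definition~\ref{d_Mink_realizacija} yields the conclusion.

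I do not expect a genuine obstacle here: the substantive combinatorics is already done in Lemma~\ref{l_bildingzanest}, the substantive geometry is the recollection of Postnikov's iterated-truncation theorem, and what remains is bookkeeping. The only two points deserving an explicit sentence are that adjoining $[n+1]$ to $\mathcal{B}_\beta-\{[n+1]\}$ keeps the building-set axioms (so that $P_{\mathcal{B}_\beta}$ is literally $\sum_{B\in\mathcal{B}_\beta}\Delta_B$), and that the ordering of $\mathcal{A}_2$ by decreasing cardinality in Definition~\ref{d_Mink_realizacija}(iii) coincides with the order in which faces are truncated in Postnikov's description of the nestohedron; both are immediate.
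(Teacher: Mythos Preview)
Your proposal is correct and follows essentially the same route as the paper's own proof, which is a one-line appeal to Lemma~\ref{l_bildingzanest}, Remark~\ref{r_postnikov_i_nas_nested}, Proposition~\ref{p_relaizacijaCo}, and Postnikov's Minkowski-realisation recalled at the end of Section~3. You have simply unpacked these citations explicitly---in particular, verifying that $[n+1]\in\mathcal{B}_\beta$ so that the building set is connected, and matching the data of Definition~\ref{d_Mink_realizacija} term by term---which is entirely appropriate bookkeeping.
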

\begin{proof}
It follows directly from Lemma~\ref{l_bildingzanest}, Remark~\ref{r_postnikov_i_nas_nested}, Proposition~\ref{p_relaizacijaCo} and Postnikov's Minkowski-realisation of nestohedra given in the end of Section 3.
\end{proof}

Therefore, the facets of $P_{\mathcal{B}_\beta}$ can be properly labelled according to Definition~\ref{d_proplabel}. For an element $A \in \mathcal{B}_\beta-\{[n+1]\}$, let $f_A$ be the facet labelled by $A$.
By the definition of $\mathcal{B}_\beta$, we have that $\beta \subseteq \mathcal{B}_\beta-\{[n+1]\} $, and hence, let
\[
F_\beta= \bigcap\limits_{B \in \beta} f_B .
\]

Since the elements of $\beta$ are mutually comparable, $F_\beta$ is a proper face of  $P_{\mathcal{B}_\beta}$ (see (cf.\ {\cite[Theorem~1.5.14]{BP15}})), and since $\beta$ is not a singleton, $F_\beta$ is not a facet. 

Let $ \mathcal{B}_{\beta \vert A} $ denote $\{ B \in \mathcal{B}_\beta \mid B\subseteq A\}$. 

\begin{prop}
\label{p_burstabernejednakosti}
We have
\[
P_{\mathcal{B}_\beta}=\bigl \{  x \in \mathbf{R}^{n+1} \mid	\sum\limits_{i=1}^{n+1}x_i=\lvert \mathcal{B}_\beta \rvert , \; 	\sum\limits_{i \in A}^{}x_i\geqslant \lvert \mathcal{B}_{\beta \vert A} \rvert \emph{\text{ for every }} A\in \mathcal{B}_\beta \bigr \}.
\]
\\Moreover, every hyperplane $H_A=\bigl \{  x \in \mathbf{R}^{n+1}  \mid	\sum\limits_{i \in A}^{}x_i=\lvert \mathcal{B}_{\beta \vert A} \rvert \; \bigr \}$  with $A \neq [n+1]$ defines the facet $f_A$ of $P_{\mathcal{B}_\beta}$.
\end{prop}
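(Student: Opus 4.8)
The plan is to compute the support function of $P_{\mathcal{B}_\beta}$ and to read the asserted inequalities off it. Recall from the end of Section~3 that $P_{\mathcal{B}_\beta}=\sum_{B\in\mathcal{B}_\beta}\Delta_B$; here $[n+1]\in\mathcal{B}_\beta$, because $\lvert\beta_{max}\rvert=k+l\leqslant n<n+1$, so $\beta_{max}\subset[n+1]$ and the summand $\Delta_{[n+1]}$ occurs. Since each $\Delta_B$ lies in the hyperplane $x_1+\ldots+x_{n+1}=1$, the Minkowski sum $P_{\mathcal{B}_\beta}$ lies in $x_1+\ldots+x_{n+1}=\lvert\mathcal{B}_\beta\rvert$. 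The support function (Definition~\ref{d_suportfunction}) is additive over Minkowski sums and $s_{\Delta_B}(x)=\max_{i\in B}x_i$, hence
\[
s_{P_{\mathcal{B}_\beta}}(x)=\sum_{B\in\mathcal{B}_\beta}\max_{i\in B}x_i\qquad\text{for every }x\in\mathbf{R}^{n+1}.
\]
For $A\subseteq[n+1]$ put $e_A=\sum_{i\in A}e_i$; the $i$-th coordinate of $-e_A$ is $-1$ for $i\in A$ and $0$ otherwise, so $\max_{i\in B}(-e_A)_i$ equals $-1$ when $B\subseteq A$ and $0$ when $B\not\subseteq A$. Therefore
\[
s_{P_{\mathcal{B}_\beta}}(-e_A)=-\bigl|\{B\in\mathcal{B}_\beta\mid B\subseteq A\}\bigr|=-\lvert\mathcal{B}_{\beta\vert A}\rvert ,
\]
that is, $\min_{x\in P_{\mathcal{B}_\beta}}\sum_{i\in A}x_i=\lvert\mathcal{B}_{\beta\vert A}\rvert$ for every $A\in\mathcal{B}_\beta$. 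So $P_{\mathcal{B}_\beta}$ is contained in the polytope $Q$ on the right-hand side of the claimed identity, every inequality $\sum_{i\in A}x_i\geqslant\lvert\mathcal{B}_{\beta\vert A}\rvert$ is valid on $P_{\mathcal{B}_\beta}$ and attained on the nonempty face $G_A:=P_{\mathcal{B}_\beta}\cap H_A$, and its constant is exactly $\lvert\mathcal{B}_{\beta\vert A}\rvert$.

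Next I would identify which $G_A$ are facets. By Proposition~\ref{p_nestoedarglavni} and Definition~\ref{d_proplabel} (as already used just before the statement), the facets of $P_{\mathcal{B}_\beta}$ are in bijection with the singleton nested sets, i.e.\ with the elements of the building set $\mathcal{B}_\beta-\{[n+1]\}$, the one properly labelled by $A$ being $f_A$; in particular $P_{\mathcal{B}_\beta}$ has exactly $\lvert\mathcal{B}_\beta\rvert-1$ facets. On the other hand we have produced exactly $\lvert\mathcal{B}_\beta\rvert-1$ valid halfspaces $\pi_A^{\geqslant}\colon\sum_{i\in A}x_i\geqslant\lvert\mathcal{B}_{\beta\vert A}\rvert$, $A\in\mathcal{B}_\beta-\{[n+1]\}$, with pairwise distinct outward normals $-e_A$. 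Granting that each $\pi_A^{\geqslant}$ is facet-defining for $P_{\mathcal{B}_\beta}$---equivalently, that the outward normal of $f_A$ is $-e_A$, which is the standard correspondence for the nestohedron $\sum_B\Delta_B$---the count leaves no room for any further facet-defining halfspace, so $\{\pi_A^{\geqslant}\mid A\in\mathcal{B}_\beta-\{[n+1]\}\}$ is the complete irredundant facet list; together with the affine hull this gives $P_{\mathcal{B}_\beta}=Q$, and $G_A=f_A$ is the facet defined by $H_A$.

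The only substantial point, and the one I expect to be the main obstacle for a proof not content to invoke Postnikov, is exactly the reverse inclusion $Q\subseteq P_{\mathcal{B}_\beta}$---equivalently, the statement that the outward facet normals of $\sum_B\Delta_B$ are precisely the vectors $-e_A$ with $A\in\mathcal{B}_\beta$, $A\neq[n+1]$, and that an $A\notin\mathcal{B}_\beta$ yields no new facet. This is Postnikov's inequality description of a nestohedron specialised to unit weights (\cite[Section~7]{P09}), so the quickest route is to cite it. A self-contained argument runs the standard greedy decomposition: given $x\in Q$ one writes $x=\sum_{B\in\mathcal{B}_\beta}x^{(B)}$ with $x^{(B)}\in\Delta_B$ by processing the elements $B$ of $\mathcal{B}_\beta$ from largest to smallest and distributing, at step $B$, a total mass $1$ among the coordinates lying in $B$ using the slack of the constraints $\sum_{i\in A}x_i\geqslant\lvert\mathcal{B}_{\beta\vert A}\rvert$; the building-set axioms for $\mathcal{B}_\beta-\{[n+1]\}$ established in Lemma~\ref{l_bildingzanest} (closure of overlapping sets under union) are exactly what makes this allocation consistent. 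In either case the support-function computation of the first paragraph supplies the constants and the hyperplane, and the proof is complete.
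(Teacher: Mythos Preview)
Your proposal is correct and in substance the same as the paper's: the paper's proof is a one-line citation of the standard inequality description of a nestohedron (Proposition~1.5.11 in \cite{BP15}) together with Proposition~\ref{p_nestoedarglavni}, and you likewise reduce the nontrivial direction to the Postnikov description \cite[Section~7]{P09}. Your support-function computation is a pleasant, explicit verification of the easy inclusion $P_{\mathcal{B}_\beta}\subseteq Q$ and of the constants $\lvert\mathcal{B}_{\beta\vert A}\rvert$, but it does not alter the overall route; the reverse inclusion and the identification of $H_A$ with the facet $f_A$ are in both cases delegated to the cited nestohedron result.
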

\begin{proof}
It follows directly from Proposition~1.5.11. in \cite{BP15} and Proposition~\ref{p_nestoedarglavni}.
\end{proof}

Now, let $N_\beta$ be a polytope obtained from $P_{\mathcal{B}_\beta}$ by removing the face $F_\beta$, i.e.
\[
N_\beta=conv\bigl( \;	\mathcal{V}(P_{\mathcal{B}_\beta})-\mathcal{V}(F_\beta) \;	\bigr).
\]
Let $\kappa_\beta:\mathbf{R}^{n+1}\rightarrow \mathbf{R}$ be a function such that
\[
\kappa_\beta(x)=\sum\limits_{B \in \beta}^{} \sum\limits_{i \in B} x_i=x_{i_1}+2x_{i_2}+\ldots+k(x_{i_k}+\ldots+x_{i_{k+l}}),
\]
where $x=(x_1,\ldots,x_{n+1})$, and let
$
m_\beta= \min\limits_{v\in \mathcal{V}(P_{\mathcal{B}_\beta})}^{} \kappa_\beta(v)$.

\begin{prop}\label{p_prekominimumaFbeta}
The following holds:
\[
F_\beta=conv\{v\in \mathcal{V}(P_{\mathcal{B}_\beta}) \mid  \kappa_\beta(v)= m_\beta\}.
\]
\end{prop}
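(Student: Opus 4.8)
The plan is to write $\kappa_\beta$ as the sum of the linear functionals that cut out the facets $f_B$, $B\in\beta$, and then read the minimum of $\kappa_\beta$ on $P_{\mathcal{B}_\beta}$ directly from Proposition~\ref{p_burstabernejednakosti}. The point will be that $F_\beta$ is precisely the face of $P_{\mathcal{B}_\beta}$ on which $\kappa_\beta$ attains its minimum.

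First I would note that, by the definition of $\kappa_\beta$,
\[
\kappa_\beta(x)=\sum_{B\in\beta}\Bigl(\sum_{i\in B}x_i\Bigr),
\]
so $\kappa_\beta$ is the sum, over $B\in\beta$, of the functionals $x\mapsto\sum_{i\in B}x_i$. Since $\beta\subseteq\mathcal{B}_\beta-\{[n+1]\}$, Proposition~\ref{p_burstabernejednakosti} applies to each such $B$ and gives $\sum_{i\in B}x_i\geqslant\lvert\mathcal{B}_{\beta\vert B}\rvert$ for every $x\in P_{\mathcal{B}_\beta}$, with equality exactly on the facet $f_B=P_{\mathcal{B}_\beta}\cap H_B$. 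Summing these inequalities over $B\in\beta$ and putting $M=\sum_{B\in\beta}\lvert\mathcal{B}_{\beta\vert B}\rvert$, I get $\kappa_\beta(x)\geqslant M$ for all $x\in P_{\mathcal{B}_\beta}$. Moreover a sum of finitely many non-negative numbers is zero iff each summand is zero, so for $x\in P_{\mathcal{B}_\beta}$ equality $\kappa_\beta(x)=M$ holds iff $\sum_{i\in B}x_i=\lvert\mathcal{B}_{\beta\vert B}\rvert$ for every $B\in\beta$, that is, iff $x\in\bigcap_{B\in\beta}\bigl(P_{\mathcal{B}_\beta}\cap H_B\bigr)=\bigcap_{B\in\beta}f_B=F_\beta$.

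Next I would invoke the fact, recorded just before the statement, that $F_\beta$ is a non-empty proper face of $P_{\mathcal{B}_\beta}$ (this uses that the elements of $\beta$ are mutually comparable). Hence the lower bound $M$ is actually attained on $P_{\mathcal{B}_\beta}$, namely on $F_\beta$; since $F_\beta$ is a non-empty polytope it has a vertex $v_0$, which is a vertex of $P_{\mathcal{B}_\beta}$ because $F_\beta$ is a face, and $\kappa_\beta(v_0)=M$. Combined with $\kappa_\beta\geqslant M$ on $P_{\mathcal{B}_\beta}$, this forces $m_\beta=\min_{v\in\mathcal{V}(P_{\mathcal{B}_\beta})}\kappa_\beta(v)=M$. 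Therefore, by the previous paragraph, $F_\beta=\{x\in P_{\mathcal{B}_\beta}\mid\kappa_\beta(x)=m_\beta\}$. Finally, a vertex of $P_{\mathcal{B}_\beta}$ lies in this face iff it is a vertex of $F_\beta$, so $\{v\in\mathcal{V}(P_{\mathcal{B}_\beta})\mid\kappa_\beta(v)=m_\beta\}=\mathcal{V}(F_\beta)$, and passing to convex hulls gives $conv\{v\in\mathcal{V}(P_{\mathcal{B}_\beta})\mid\kappa_\beta(v)=m_\beta\}=conv\,\mathcal{V}(F_\beta)=F_\beta$, which is the assertion.

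I do not expect a serious obstacle: the whole argument is the additive decomposition of $\kappa_\beta$ together with Proposition~\ref{p_burstabernejednakosti}. The only subtlety is ensuring the lower bound $M$ is genuinely attained, so that $m_\beta=M$ and not $m_\beta>M$; this is exactly where the non-emptiness of $F_\beta$ is needed.
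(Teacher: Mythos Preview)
Your proposal is correct and follows essentially the same approach as the paper: decompose $\kappa_\beta$ as $\sum_{B\in\beta}\sum_{i\in B}x_i$, apply Proposition~\ref{p_burstabernejednakosti} termwise to obtain the lower bound $\sum_{B\in\beta}\lvert\mathcal{B}_{\beta\vert B}\rvert$, and observe that equality holds precisely on $\bigcap_{B\in\beta}f_B=F_\beta$. Your version is in fact slightly more careful than the paper's, which simply asserts $m_\beta=\sum_{B\in\beta}\lvert\mathcal{B}_{\beta\vert B}\rvert$ without explicitly invoking the non-emptiness of $F_\beta$ to guarantee attainment, and which stops at the characterisation of vertices rather than spelling out the passage to convex hulls.
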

\begin{proof}
Let $v=(v_1,\ldots,v_{n+1})$ be a vertex of the nestohedron $\mathcal{P_{\mathcal{B}_\beta}}$.
Since $\beta\subseteq \mathcal{B}_\beta$, from Proposition~\ref{p_burstabernejednakosti}, we have that
\[
\kappa_\beta(v)=\sum\limits_{B \in \beta}^{} \sum\limits_{i \in B} v_i \geqslant \sum\limits_{B \in \beta}^{} \lvert \mathcal{B}_{\beta \vert B} \rvert ,
\]
which implies $m_\beta=\sum\limits_{B \in \beta}^{} \lvert \mathcal{B}_{\beta \vert  B} \rvert $. Therefore, $\kappa_\beta(v)=m_\beta$ if and only if for every $B \in \beta$, the vertex $v$ lies in the hyperplane $H_B$. Since, $H_B$ defines the facet $f_B$, $\kappa_\beta(v)=m_\beta$ if and only if $v \in \bigcap\limits_{B \in \beta} f_B$.

\end{proof}
\begin{cor}\label{c_prekominimumaNbeta}
The following holds:
\[
N_\beta=conv\{v\in \mathcal{V}(P_{\mathcal{B}_\beta}) \mid  \kappa_\beta(v) > m_\beta\}.
\]
\end{cor}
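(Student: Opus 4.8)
The plan is to read off the result directly from the definition of $N_\beta$ together with the explicit description of $F_\beta$ established in Proposition~\ref{p_prekominimumaFbeta}. Recall that by construction $N_\beta=conv\bigl(\mathcal{V}(P_{\mathcal{B}_\beta})-\mathcal{V}(F_\beta)\bigr)$, so the only thing that needs to be verified is the identity of vertex sets $\mathcal{V}(F_\beta)=\{v\in\mathcal{V}(P_{\mathcal{B}_\beta})\mid\kappa_\beta(v)=m_\beta\}$; once this is known, passing to the complement inside $\mathcal{V}(P_{\mathcal{B}_\beta})$ and taking convex hulls finishes the argument.

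First I would invoke the standard polytope fact that, since $F_\beta$ is a face of $P_{\mathcal{B}_\beta}$, its vertices are exactly those vertices of $P_{\mathcal{B}_\beta}$ that lie in $F_\beta$. Combining this with Proposition~\ref{p_prekominimumaFbeta}, which gives $F_\beta=conv\{v\in\mathcal{V}(P_{\mathcal{B}_\beta})\mid\kappa_\beta(v)=m_\beta\}$, I would argue that a vertex $v$ of $P_{\mathcal{B}_\beta}$ belongs to this convex hull if and only if $\kappa_\beta(v)=m_\beta$: the forward implication is immediate, and the reverse one uses that $\kappa_\beta$ is linear, so any convex combination of points on which $\kappa_\beta$ takes the value $m_\beta$ again has $\kappa_\beta$-value $m_\beta$. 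Hence $\mathcal{V}(F_\beta)=\{v\in\mathcal{V}(P_{\mathcal{B}_\beta})\mid\kappa_\beta(v)=m_\beta\}$.

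It then remains to substitute. We get $\mathcal{V}(P_{\mathcal{B}_\beta})-\mathcal{V}(F_\beta)=\{v\in\mathcal{V}(P_{\mathcal{B}_\beta})\mid\kappa_\beta(v)\neq m_\beta\}$, and since $m_\beta=\min_{v\in\mathcal{V}(P_{\mathcal{B}_\beta})}\kappa_\beta(v)$ by definition, this set coincides with $\{v\in\mathcal{V}(P_{\mathcal{B}_\beta})\mid\kappa_\beta(v)>m_\beta\}$. Taking convex hulls and using the definition of $N_\beta$ yields the claimed equality. I expect no real obstacle here: the corollary is essentially a bookkeeping consequence of Proposition~\ref{p_prekominimumaFbeta}, and the only point worth stating with care is the elementary fact that the vertex set of a face equals the set of vertices of the ambient polytope contained in that face, which is what turns the hull description of $F_\beta$ into a description of $\mathcal{V}(F_\beta)$.
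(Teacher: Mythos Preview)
Your proposal is correct and matches the paper's approach: the corollary is stated there without proof, as an immediate consequence of Proposition~\ref{p_prekominimumaFbeta} and the definition of $N_\beta$, which is exactly what you spell out. One cosmetic remark: in your ``forward/reverse'' labeling of the biconditional you appear to have the two directions swapped (the convex-combination argument using linearity of $\kappa_\beta$ is what shows that membership in the hull forces $\kappa_\beta(v)=m_\beta$, not the other way around), but the mathematical content is fine.
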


The previous claim offers a comfortable way to obtain the polytope $N_\beta$ from the nestohedron $P_{\mathcal{B}_\beta}$. Now, one is able to handle only with vertices and their coordinates instead of facets and their labels, which is algorithmically closer to Minkowski sums and essentially beneficial with regards to computational aspect.

\begin{exm}
If $n=2$ and $\beta=\bigl\{\{1,2\},\{1\}\bigr\}$, then $\mathcal{B}_\beta=\beta \cup \bigl\{\{2\},\{3\},[3]\bigr\}$ and $P_{\mathcal{B}_\beta}$ is the trapezoid $ABCD$ given in Figure~\ref{s:skica_12}. Since $m_\beta=4$, $F_\beta$ and $N_\beta$ are the vertex $D$ and the triangle $ABC$, respectively.
\end{exm}
\begin{exm}
Let $n=3$.

If $\beta=\bigl\{\{1,2,4\},\{1,2\},\{1\}
\bigr\}$, then $\mathcal{B}_\beta=\beta \cup \bigl \{\{2\},\{3\},\{4\},[4]
\bigr\}$ and  
\linebreak$P_{\mathcal{B}_\beta}=\Delta_{[4]}+\Delta_{\{1\}}
+\Delta_{\{2\}}+\Delta_{\{3\}}+\Delta_{\{4\}}+\Delta_{\{1,2,4\}}+\Delta_{\{1,2\}}$,
the nestohedron $ABCDEFGH$ illustrated in Figure~\ref{s:skica_a4} left. Here, $m_\beta=9$, and hence, $F_\beta$ is the vertex $D$, while $N_\beta$ is the convex hull of the remaining vertices (see Figure~\ref{s:skica_a4} right). Figure~\ref{s:skica_a4rez} depicts the sum $P_{\mathcal{B}_\beta}+N_\beta$, which is normally equivalent to the polytope obtained from $P_{\mathcal{B}_\beta}$ by truncation in the vertex $D$.

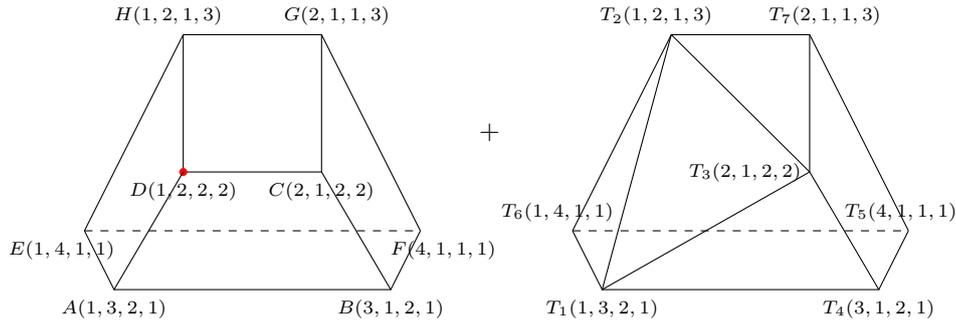
\begin{figure}[h!h!]
\begin{center}
\begin{tabular}{cc}

\begin{tikzpicture}[scale=1.3]

\draw (0,0) node[above, xshift=-0.2cm] {\scriptsize $H(1,2,1,3)$}-- (1.4,0) node[above, xshift=0.2cm] {\scriptsize $G(2,1,1,3)$};
\draw(1.4,0)-- (1.4,-1.4) node[below] {\scriptsize $C(2,1,2,2)$};
\draw (1.4,-1.4)-- (0,-1.4)node[below] {\scriptsize $D(1,2,2,2)$} -- (0,0);

\filldraw[red] (0,-1.4) circle (1pt);;

\draw (1.4,0) -- (2.4,-2) node[below,  xshift=0.3cm] {\scriptsize $F(4,1,1,1)$};
\draw (0,0) -- (-1,-2) node[below,  xshift=-0.3cm] {\scriptsize $E(1,4,1,1)$};
\draw [dashed]
(-1,-2) -- (2.4,-2);

\draw (0,-1.4) -- (-0.7,-2.6)node[below] {\scriptsize $A(1,3,2,1)$} -- (2.1,-2.6) node[below] {\scriptsize $B(3,1,2,1)$}-- (1.4,-1.4);

\draw (-0.7,-2.6) -- (-1,-2);
\draw (2.1,-2.6) -- (2.4,-2);

\draw  (3.1,-1) node {$+$};
\end{tikzpicture}
& \hspace{-0.7cm}
\begin{tikzpicture}[scale=1.3]
\draw (0,0) node[above, xshift=-0.2cm] {\scriptsize $T_2(1,2,1,3)$}-- (1.4,0) node[above, xshift=0.2cm] {\scriptsize $T_7(2,1,1,3)$};
\draw (1.4,0)-- (1.4,-1.4) node[above, xshift=-0.85cm, yshift=-0.25cm] {\scriptsize $T_3(2,1,2,2)$};
\draw (1.4,-1.4) -- (-0.7,-2.6)node[below] {\scriptsize $T_1(1,3,2,1)$}--(0,0);
\draw (0,0) -- (1.4,-1.4);

\draw (1.4,0) -- (2.4,-2) node[above,xshift=-0.1cm  ] {\scriptsize $T_5(4,1,1,1)$};
\draw (0,0) -- (-1,-2) node[above,  xshift=-0.2cm] {\scriptsize $T_6(1,4,1,1)$};
\draw [dashed]
(-1,-2) -- (2.4,-2);

\draw (-0.7,-2.6) -- (2.1,-2.6) node[below] {\scriptsize $T_4(3,1,2,1)$}-- (1.4,-1.4);

\draw (-0.7,-2.6) -- (-1,-2);
\draw (2.1,-2.6) -- (2.4,-2);

\end{tikzpicture}
\end{tabular}
\end{center}
\caption{The polytopes $P_{\mathcal{B}_\beta}$ and $N_\beta$ for $\beta=\bigl\{\{1,2,4\},\{1,2\},\{1\}\bigr\}$} \label{s:skica_a4}
\end{figure}
\vspace{-3mm}
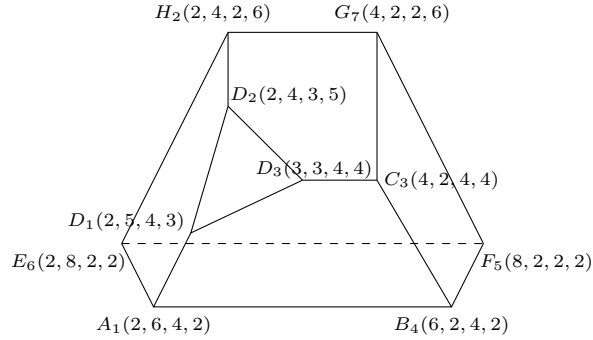
\begin{figure} [h!h!]
\begin{center}
\begin{tikzpicture}[scale=1.4]
\draw(0,0) node[above, xshift=-0.2cm] {\scriptsize $H_2(2,4,2,6)$}-- (1.4,0) node[above, xshift=0.2cm] {\scriptsize $G_7(4,2,2,6)$};
\draw (1.4,0)-- (1.4,-1.4) node[above, xshift=0.85cm, yshift=-0.25cm] {\scriptsize $C_3(4,2,4,4)$};
\draw  (1.4,-1.4)-- (0.7,-1.4) node[above, xshift=0.15cm, yshift=-0.1cm] {\scriptsize $D_3(3,3,4,4)$};
\draw  (0.7,-1.4)-- (-0.35,-1.9) node[above, xshift=-0.85cm, yshift=-0.1cm] {\scriptsize $D_1(2,5,4,3)$}--(0,-0.7) node[above, yshift=-0.1cm, xshift=0.8cm] {\scriptsize $D_2(2,4,3,5)$}--(0,0);
\draw (0,-0.7) -- (0.7,-1.4);

\draw (1.4,0) -- (2.4,-2) node[below,  xshift=0.7cm] {\scriptsize $F_5(8,2,2,2)$};
\draw (0,0) -- (-1,-2) node[below,  xshift=-0.7cm] {\scriptsize $E_6(2,8,2,2)$};
\draw [dashed](-1,-2) -- (2.4,-2);

\draw (-0.35,-1.9) -- (-0.7,-2.6)node[below] {\scriptsize $A_1(2,6,4,2)$} -- (2.1,-2.6) node[below] {\scriptsize $B_4(6,2,4,2)$}-- (1.4,-1.4);

\draw (-0.7,-2.6) -- (-1,-2);
\draw (2.1,-2.6) -- (2.4,-2);

\end{tikzpicture}
\end{center}
\caption{The sum $P_{\mathcal{B}_\beta}+N_\beta$ for $\beta=\bigl\{\{1,2,4\},\{1,2\},\{1\}\bigr\}$} \label{s:skica_a4rez}
\end{figure}
If $\beta=\bigl\{\{1,2,4\},\{1,2\}
\bigr\}$, then $\mathcal{B}_\beta$ and $P_{\mathcal{B}_\beta}$ are the same as in the previous case, while $m_\beta=8$. This minimum is achieved at the points $C$ and $D$ and therefore, $F_\beta$ is the edge $CD$.
Or equivalently, $F_\beta$ is the intersection of the facets labelled by $\{1,2\}$ and $\{1,2,4\}$, i.e.\
the quadrate $CDGH$ and the trapezoid $ABCD$.
However, $N_\beta$ is the convex hull of the remaining points (see Figure~\ref{s:skica_a2} right). Note that the partial sum $P_{\mathcal{B}_\beta}+N_\beta$, depicted in Figure~\ref{s:skica_rez} left, is normally equivalent to the polytope obtained from $P_{\mathcal{B}_\beta}$ by truncation in the edge $CD$.

If $\beta=\bigl\{\{1,2\},\{1\}
\bigr\}$, then $\mathcal{B}_\beta=\beta \cup \bigl\{\{2\},\{3\},\{4\},\{1,2,3\},\{1,2,4\},[4]
\bigr\}$ and
$P_{\mathcal{B}_\beta}=\Delta_{[4]}+\Delta_{\{1\}}
+\Delta_{\{2\}}+\Delta_{\{3\}}+\Delta_{\{4\}}+\Delta_{\{1,2,3\}}+\Delta_{\{1,2,4\}}+\Delta_{\{1,2\}}$,
the nestohedron $ABCDEFGHIJ$ illustrated in Figure~\ref{s:skica_a3} left.
It implies that $m_\beta=4$ and $F_\beta$ is the edge $DJ$. Or equivalently, $F_\beta$ is the intersection of the facets labelled by $\{1,2\}$ and $\{1\}$, i.e.\ the pentagon $AEDJH$ and the quadrate $CDIJ$. However, $N_\beta$ is the convex hull of the remaining points depicted in Figure~\ref{s:skica_a3} right. Notice that, in this case, $N_\beta$ is not simple. Figure~\ref{s:skica_rez} right illustrates the sum $P_{\mathcal{B}_\beta}+N_\beta$, which is normally equivalent to the polytope obtained from $P_{\mathcal{B}_\beta}$ by truncation in the edge~$DJ$.  

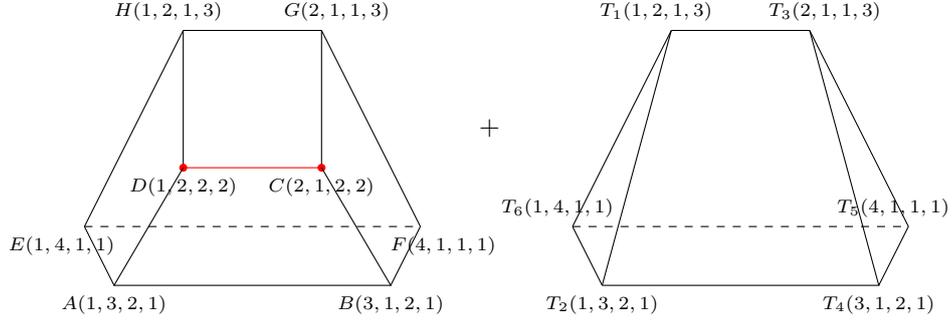
\begin{figure}[h!h!]
\begin{center}
\begin{tabular}{cc}
\begin{tikzpicture}[scale=1.3]

\draw (0,0) node[above, xshift=-0.2cm] {\scriptsize $H(1,2,1,3)$}-- (1.4,0) node[above, xshift=0.2cm] {\scriptsize $G(2,1,1,3)$};
\draw(1.4,0)-- (1.4,-1.4) node[below] {\scriptsize $C(2,1,2,2)$};
\draw[red] (1.4,-1.4)-- (0,-1.4);
\draw (0,-1.4)node[below] {\scriptsize $D(1,2,2,2)$} -- (0,0);

\filldraw[red] (0,-1.4) circle (1pt);
\filldraw[red] (1.4,-1.4) circle (1pt);

\draw (1.4,0) -- (2.4,-2) node[below,  xshift=0.3cm] {\scriptsize $F(4,1,1,1)$};
\draw (0,0) -- (-1,-2) node[below,  xshift=-0.3cm] {\scriptsize $E(1,4,1,1)$};
\draw [dashed]
(-1,-2) -- (2.4,-2);

\draw (0,-1.4) -- (-0.7,-2.6)node[below] {\scriptsize $A(1,3,2,1)$} -- (2.1,-2.6) node[below] {\scriptsize $B(3,1,2,1)$}-- (1.4,-1.4);

\draw (-0.7,-2.6) -- (-1,-2);
\draw (2.1,-2.6) -- (2.4,-2);

\draw  (3.1,-1) node {$+$};
\end{tikzpicture}& \hspace{-0.7cm}
\begin{tikzpicture}[scale=1.3]
\draw(0,0) node[above, xshift=-0.2cm] {\scriptsize $T_1(1,2,1,3)$}-- (1.4,0) node[above, xshift=0.2cm] {\scriptsize $T_3(2,1,1,3)$};

\draw (-0.7,-2.6)node[below] {\scriptsize $T_2(1,3,2,1)$}--(0,0);

\draw (1.4,0) -- (2.4,-2) node[above,xshift=-0.2cm  ] {\scriptsize $T_5(4,1,1,1)$};
\draw
(0,0) -- (-1,-2) node[above,  xshift=-0.2cm] {\scriptsize $T_6(1,4,1,1)$};
\draw [dashed](-1,-2) -- (2.4,-2);

\draw (-0.7,-2.6) -- (2.1,-2.6) node[below] {\scriptsize $T_4(3,1,2,1)$}-- (1.4,0);

\draw (-0.7,-2.6) -- (-1,-2);
\draw (2.1,-2.6) -- (2.4,-2);

 \end{tikzpicture}
\end{tabular}
\end{center}
\caption{The polytopes $P_{\mathcal{B}_\beta}$ and $N_\beta$ for $\beta=\bigl\{\{1,2,4\},\{1,2\}\bigr\}$} \label{s:skica_a2}
\end{figure}
\begin{figure}[h!h!]
\begin{center}
\begin{tabular}{cc}

\begin{tikzpicture}[scale=1.4]
\hspace{-0.15cm}
\draw (-0.2,-0.5) node[above, xshift=-0.2cm] {\scriptsize $H(1,3,1,3)$}-- (1.6,-0.5) node[above, xshift=0.2cm] {\scriptsize $G(3,1,1,3)$}--(1.1,-1.2) node[above, xshift=0.8cm, yshift=-0.3cm] {\scriptsize $I(2,1,2,3)$} -- (1.1,-2.1) node[below, xshift=0.3cm] {\scriptsize $C(2,1,3,2)$}-- (0.3,-2.1)node[below, xshift=-0.3cm] {\scriptsize $D(1,2,3,2)$}-- (0.3,-1.2) node[above, xshift=-0.8cm, yshift=-0.3cm] {\scriptsize $J(1,2,2,3)$};

\draw[red] (0.3,-2.1) -- (0.3,-1.2);
\filldraw [red] (0.3,-2.1) circle (1pt) (0.3,-1.2) circle (1pt);
\draw (-0.2,-0.5) --  (0.3,-1.2);
\draw (1.1,-1.2) --  (0.3,-1.2);

\draw (1.6,-0.5) -- (2.4,-2) node[above,  xshift=0.3cm] {\scriptsize $F(5,1,1,1)$};
\draw (-0.2,-0.5) -- (-1,-2) node[above,  xshift=-0.2cm] {\scriptsize $E(1,5,1,1)$};
\draw [dashed]
(-1,-2) -- (2.4,-2);

\draw (0.3,-2.1) -- (-0.2,-3)node[below] {\scriptsize $A(1,3,3,1)$} -- (1.6,-3) node[below] {\scriptsize $B(3,1,3,1)$}--(1.1,-2.1) ;

\draw (-0.2,-3) -- (-1,-2);
\draw (1.6,-3) -- (2.4,-2);

\draw  (2.8,-1.2) node {$+$};
\end{tikzpicture}
& \hspace{-1.5cm}
\begin{tikzpicture}[scale=1.4]
\draw (1.1,-1.2)--(-0.2,-0.5) node[above, xshift=-0.2cm] {\scriptsize $T_1(1,3,1,3)$}-- (1.6,-0.5) node[above, xshift=0.2cm] {\scriptsize $T_8(3,1,1,3)$}--(1.1,-1.2) node[above, xshift=0.82cm, yshift=-0.3cm] {\scriptsize $T_2(2,1,2,3)$} -- (1.1,-2.1) node[above, xshift=0.4cm] {\scriptsize $T_4(2,1,3,2)$}-- (1.6,-3) node[below] {\scriptsize $T_5(3,1,3,1)$}-- (-0.2,-3)node[below] {\scriptsize $T_3(1,3,3,1)$}--(1.1,-2.1);

\draw (1.6,-0.5) -- (2.4,-2) node[below,  xshift=0.3cm] {\scriptsize $T_7(5,1,1,1)$}--(1.6,-3);
\draw (-0.2,-0.5) -- (-1,-2) node[below,  xshift=-0.3cm] {\scriptsize $T_6(1,5,1,1)$};
\draw [dashed](-1,-2) -- (2.4,-2);

\draw (-0.2,-3) -- (-1,-2);
\draw (-0.2,-0.5) -- (-0.2,-3);

 \end{tikzpicture}
\end{tabular}
\end{center}
\caption{The polytopes $P_{\mathcal{B}_\beta}$ and $N_\beta$ for $\beta=\bigl\{\{1,2\},\{1\}\bigr\}$} \label{s:skica_a3}
\end{figure}
\begin{figure}[h!h!]
\begin{center}
\begin{tabular}{cc}

\begin{tikzpicture}[scale=1.35]
\hspace{-0.15cm}
\draw
(0,0) node[above, xshift=-0.2cm] {\scriptsize $H_1(2,4,2,6)$} -- (1.4,0) node[above, xshift=0.2cm] {\scriptsize $G_3(4,2,2,6)$};

\draw (1.4,0) -- (1.4,-0.8) node[above, xshift=0.85cm, yshift=-0.2 cm] {\scriptsize $C_3(4,2,3,5)$} -- (1.7,-1.8) node[above, xshift=0.85cm, yshift=-0.2 cm] {\scriptsize $C_4(5,2,4,3)$};
\draw (1.7,-1.8)--(-0.3,-1.8)node[above, xshift=-0.85cm, yshift=-0.2 cm] {\scriptsize $D_2(2,5,4,3)$}
 -- (-0.7,-2.6)node[below] {\scriptsize $A_2(2,6,4,2)$} -- (2.1,-2.6) node[below] {\scriptsize $B_4(6,2,4,2)$};

\draw (0,0) -- (0,-0.8) node[above, xshift=-0.85cm, yshift=-0.1 cm] {\scriptsize $D_1(2,4,3,5)$} -- (-0.3,-1.8);
 \draw (0,-0.8) -- (1.4,-0.8) ;
 \draw  (1.7,-1.8) -- (2.1,-2.6);

\draw
(1.4,0) -- (2.4,-2) node[below,  xshift=0.4cm] {\scriptsize $F_5(8,2,2,2)$};
\draw
(0,0) -- (-1,-2) node[below,  xshift=-0.4cm] {\scriptsize $E_6(2,8,2,2)$};
\draw [dashed]
(-1,-2) -- (2.4,-2);

\draw (-0.7,-2.6) -- (-1,-2);
\draw (2.1,-2.6) -- (2.4,-2);

\end{tikzpicture}
& \hspace{-1.5cm}
\begin{tikzpicture}[scale=1.45]
\hspace{-0.15cm}
\draw (-0.2,-0.5) node[above, xshift=-0.2cm] {\scriptsize $H_1(2,6,2,6)$}-- (1.6,-0.5) node[above, xshift=0.2cm] {\scriptsize $G_8(6,2,2,6)$}--(1.1,-1.2) node[above, xshift=0.8cm, yshift=-0.3cm] {\scriptsize $I_2(4,2,4,6)$} -- (1.1,-2.1) node[above, xshift=0.8cm, yshift=-0.35cm] {\scriptsize $C_4(4,2,6,4)$}-- (0.6,-2.1)node[below, xshift=-0.3cm] {\scriptsize $D_4(3,3,6,4)$}-- (0.6,-1.2) node[above, xshift=-0.8cm, yshift=-0.3cm] {\scriptsize $J_2(3,3,4,6)$}--(0.05,-0.85) node[above, xshift=-0.8cm, yshift=-0.3cm] {\scriptsize $J_1(2,5,3,6)$}--(-0.2,-0.5)-- (-1,-2) node[above] {\scriptsize $E_6(2,10,2,2)$};;

\draw (1.1,-1.2) --  (0.6,-1.2);
\draw (1.6,-0.5) -- (2.4,-2) node[above, xshift=-0.2] {\scriptsize $F_7(10,2,2,2)$};

\draw [dashed]
(-1,-2) -- (2.4,-2);

\draw (-1,-2)-- (-0.2,-3)node[below] {\scriptsize $A_3(2,6,6,2)$} -- (1.6,-3) node[below] {\scriptsize $B_5(6,2,6,2)$}--(1.1,-2.1) ;

\draw (1.6,-3) -- (2.4,-2);
\draw (0.05,-0.85) -- (0.05,-2.55) node[above, xshift=-0.8cm, yshift=-0.3cm] {\scriptsize $D_3(2,5,6,3)$}-- (-0.2,-3);
\draw (0.05,-2.55) -- (0.6,-2.1);
\end{tikzpicture}
\end{tabular}
\end{center}
\caption{The sums $P_{\mathcal{B}_\beta}+N_\beta$ for $\beta=\bigl\{\{1,2,4\},\{1,2\}\bigr\}$ and $\beta=\bigl\{\{1,2\},\{1\}\bigr\}$} \label{s:skica_rez}
\end{figure}
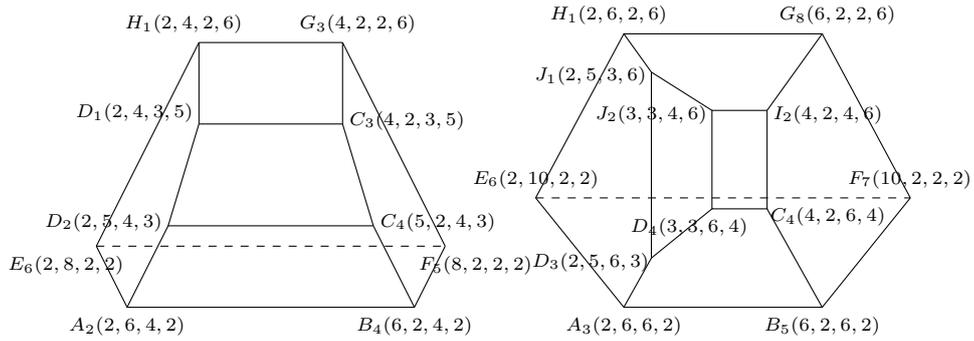

\end{exm}
\begin{exm}
Let $n=4$. If $\beta=\bigl\{\{1,2,3\},\{1,2\}
\bigr\}$, then 
\\$\mathcal{B}_\beta=\beta \cup \bigl \{\{1\},\{2\},\{3\},\{4\},\{5\},\{1,2,3,4\},\{1,2,3,5\},[5]
\bigr\}$ and $\mathcal{V}(P_{\mathcal{B}_\beta})$ is the set
\[\begin{array}{rlllll}
\bigl\{
(6,1,1,1,1), &
(1,6,1,1,1), &
(2,1,5,1,1), &
(1,2,5,1,1), &
(1,2,3,3,1),	
\\[0.7ex]
(4,1,1,3,1), &
(3,1,1,3,2), &
(1,4,1,3,1), &
(2,1,3,3,1), &
(1,2,3,1,3),
\\[0.7ex]
(2,1,2,3,2), &
(1,2,2,3,2), &
(4,1,1,1,3), &
(1,4,1,1,3), &
(1,3,1,2,3),
\\[0.7ex]
(2,1,3,1,3), &
(3,1,1,2,3), &
(2,1,2,2,3), &
(1,2,2,2,3), &
(1,3,1,3,2)  \bigr\}.
\end{array}\]
It implies that $m_\beta=9$ and $F_\beta$ is the quadrilateral whose vertices are the points in the last column from the set. 
Hence, $N_\beta$ is the convex hull of the remaining points.
\end{exm}

Finally, for $n\geqslant 2$ let
\[
PA_{n,1}= \Delta_{[n+1]}+\sum_{\beta\in \mathcal{A}_1} \Delta_{\bigcup\beta}+ \sum_{\beta\in \mathcal{A}_2} N_\beta.
\]
The rest of this section is devoted to a proof of the following result.
\begin{thm}\label{t_glavnat}
$PA_{n,1}$ is an $n$-dimensional Minkowski-realisation of $C$. Moreover,
\[
PA_{n,1} \simeq \mathbf{PA}_n.
\]
\end{thm}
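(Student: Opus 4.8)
The plan is to exhibit the function $\varphi$ required by Definition~\ref{d_Mink_realizacija} and to verify its three conditions, together with the asserted normal equivalence, by running step by step through the sequence of truncations that turns the permutohedron into $\mathbf{PA}_n$. Put $\varphi(\{B\})=\Delta_{B}$ for each singleton $\{B\}\in\mathcal{A}_1$ and $\varphi(\beta)=N_\beta$ for each $\beta\in\mathcal{A}_2$; then condition~(ii) holds by the very definition of $PA_{n,1}$. Since $\mathcal{A}_1=\{\{B\}\mid\emptyset\neq B\subset[n+1]\}$, the partial sum $S_0:=\Delta_{[n+1]}+\sum_{\beta\in\mathcal{A}_1}\varphi(\beta)=\sum_{\emptyset\neq B\subseteq[n+1]}\Delta_B$ is, by Postnikov's construction recalled at the end of Section~3 (applied to the building set of all non-empty subsets of $[n+1]$, i.e.\ to the complete graph on $[n+1]$), the $n$-permutohedron; it realises $C_1$, and its facets are properly labelled by $\mathcal{A}_1$, the one labelled by $\{B\}$ lying in a hyperplane $\sum_{j\in B}x_j=\mathrm{const}$.

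Enumerate $\mathcal{A}_2=\{\beta^{(1)},\dots,\beta^{(m)}\}$ with $\lvert\beta^{(1)}\rvert\geqslant\cdots\geqslant\lvert\beta^{(m)}\rvert$ (the indexing required by Definition~\ref{d_Mink_realizacija}(iii)) and set $S_i=S_{i-1}+N_{\beta^{(i)}}$. I would prove by induction on $i$ that $S_i\simeq\mathbf{PA}_n^{(i)}$, where $\mathbf{PA}_n^{(i)}$ is the polytope cut out by the hyperplane $\pi$ together with only those halfspaces ${\pi_\gamma}^{\geqslant}$ with $\gamma\in\mathcal{A}_1\cup\{\beta^{(1)},\dots,\beta^{(i)}\}$, and that the facets of $S_i$ are properly labelled by these $\gamma$, the one labelled by $\gamma$ being parallel to the equilabelled facet of $\mathbf{PA}_n$ (equivalently, having normal direction $\kappa_\gamma$, which is $\sum_{j\in B}x_j$ when $\gamma=\{B\}$ and the left-hand side of $\pi_\gamma$ when $\gamma\in\mathcal{A}_2$). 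The base case $i=0$ is the previous paragraph. Granting the claim, the case $i=m$ gives $PA_{n,1}=S_m\simeq\mathbf{PA}_n^{(m)}=\mathbf{PA}_n$, which realises $C$ by Theorem~\ref{t_realizacijaZoran}; this is condition~(i), condition~(iii) is precisely the assertion --- proved by the inductive step --- that $\{N_{\beta^{(i)}}\}_{i\in[m]}$ is a truncator set of summands for $S_0$, and $PA_{n,1}\simeq\mathbf{PA}_n$ then follows from Proposition~\ref{p_paralelni_feseti}, the face lattices being identified compatibly with the parallel labelling.

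For the inductive step fix $i$ and write $\beta=\beta^{(i)}$, and let $g_B$ denote the facet of $S_{i-1}$ labelled by $\{B\}$. Since the members of $\beta$ form a chain under inclusion, a routine argument (parallel to the proof of Proposition~\ref{p_prekominimumaFbeta}, transported from $P_{\mathcal{B}_\beta}$ to $S_{i-1}$) shows that $F:=\bigcap_{B\in\beta}g_B$ is a non-empty face of $S_{i-1}$ which is not a facet --- it survives the earlier truncations because the decreasing-cardinality order prevents any $\beta^{(j)}$ with $j<i$ from being a sub-collection of $\beta$ --- and that $\kappa_\beta=\sum_{B\in\beta}\sum_{j\in B}x_j$ attains its minimum on $S_{i-1}$ exactly along $F$, so that there is a parallel truncation $\mathrm{tr}_FS_{i-1}$ whose new facet has normal direction $\kappa_\beta$. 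By Proposition~\ref{l_deformacijajesingleprofinjenje} it then suffices to show that $N_\beta$ is an $F$-deformation of $S_{i-1}$: this yields $S_i=S_{i-1}+N_\beta\simeq\mathrm{tr}_FS_{i-1}\simeq\mathbf{PA}_n^{(i)}$, and labelling the new facet by $\beta$ via the bijection of Remark~\ref{r_correspondingfacettruncation} closes the induction.

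The deformation statement is the heart of the argument and the step I expect to be the main obstacle. First, $N_\beta$ is an $F_\beta$-deformation of the local nestohedron $P_{\mathcal{B}_\beta}$: by Corollary~\ref{c_prekominimumaNbeta}, $N_\beta=conv\{v\in\mathcal{V}(P_{\mathcal{B}_\beta})\mid\kappa_\beta(v)>m_\beta\}$, and since $\kappa_\beta$ attains its minimum on $P_{\mathcal{B}_\beta}$ exactly along $F_\beta$ (Proposition~\ref{p_prekominimumaFbeta}) there is a parallel truncation $\mathrm{tr}_{F_\beta}P_{\mathcal{B}_\beta}$ with new-facet normal $\kappa_\beta$; comparing its halfspace description (Proposition~\ref{p_burstabernejednakosti}) with the convex-hull description of $N_\beta$ one checks the two conditions of Definition~\ref{d_pideformacija} --- when $F_\beta$ is a vertex this is exactly Proposition~\ref{p_trunkacijatemenaprostog}, and in general one verifies that $N_\beta$ arises from $\mathrm{tr}_{F_\beta}P_{\mathcal{B}_\beta}$ by parallel translations of facets without crossing vertices, in the sense of Remark~\ref{r_deformacijagruboreceno}. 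Second --- and this is the delicate transfer --- the set $\mathcal{B}_\beta$ (the chain $\beta$, together with all proper subsets of $\beta_{min}$, all proper supersets of $\beta_{max}$, and all singletons) was chosen so that the normal fan of $P_{\mathcal{B}_\beta}$ coincides, in a neighbourhood of $F_\beta$, with that of $S_{i-1}$ in a neighbourhood of $F$: the elements of $\mathcal{B}_1$ incomparable with $\beta$ (those absent from $\mathcal{B}_\beta$) label facets disjoint from $F$ and hence do not affect the local fan there, while the comparable ones, and the summands $N_{\beta^{(j)}}$ added at earlier steps, contribute exactly the facets retained in $\mathcal{B}_\beta$. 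Because being an $F$-deformation is a condition only on the facets through the vertices of $\mathrm{tr}_FP_1$ and on the new facet --- all of it local to $F$ --- the property passes from $P_{\mathcal{B}_\beta}$ to $S_{i-1}$, finishing the induction. Making this locality transfer rigorous, with careful bookkeeping of which already-added summands meet $F$, is where the real work lies; everything else reduces to manipulations with the labels and to Propositions~\ref{c_fanovi_sabiraka}, \ref{l_deformacijajesingleprofinjenje} and~\ref{p_paralelni_feseti}.
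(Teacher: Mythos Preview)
Your overall architecture matches the paper's: set up $S_0$ as the permutohedron, order $\mathcal{A}_2$ by decreasing cardinality, and show inductively that each $N_\beta$ is a truncator summand producing the facet parallel to $\pi_\beta$. You also correctly identify Lemma~\ref{l_glavnal} (that $N_\beta$ is an $F_\beta$-deformation of the local nestohedron $P_{\mathcal{B}_\beta}$) as the key preliminary fact.

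The gap is in your ``delicate transfer''. You claim the normal fan of $P_{\mathcal{B}_\beta}$ near $F_\beta$ coincides with that of $S_{i-1}$ near $F$, and hence that $N_\beta$ is an $F$-deformation of $S_{i-1}$. This is false as soon as $\dim F_\beta>0$. The summands $N_{\beta^{(j)}}$ with $j<i$ and $\beta\subsetneq\beta^{(j)}$ have already truncated $S_0$ in faces \emph{contained in} $F'$ (the face of $S_0$ corresponding to $F_\beta$); the resulting facets, with outward normals $a_{\beta^{(j)}}$, are \emph{not} among the facet normals of $P_{\mathcal{B}_\beta}$, yet they do meet $F$ in $S_{i-1}$. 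Consequently the new facet $\mathrm{tr}_FS_{i-1}\cap\alpha_0$ is a genuine refinement of $P_{\mathcal{B}_\beta}\cap\pi_{\beta,c}$, so condition~(i) of Definition~\ref{d_pideformacija} fails: $N_\beta\cap\pi_{\beta,1}\not\simeq S_{i-1}\cap\alpha_0$. The paper says this explicitly (``we can not conclude that $N_\beta$ is an $F$-deformation of $S_{i-1}$'') and therefore \emph{abandons} Proposition~\ref{l_deformacijajesingleprofinjenje} at this point.

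What the paper does instead is verify the two conditions of Proposition~\ref{p_dovoljnomakskonuseposmatrati} by hand. The crucial ingredient you are missing is Lemma~\ref{l_normalezaglavnut}: because every earlier truncation normal $a_{\beta^{(j)}}$ is, by Remark~\ref{r_haovi}, a sum $h_I=\sum_{i\in I}(-a_i)$ over a set $I$ with $[p]\subset I\subseteq[n]$ (here $p=n-\dim F$), and these $I$'s are nested, each maximal cone of $\mathcal{N}(S_{i-1})$ inside $N_u(S_0)$ can be shown to sit inside one of the $p$ cones $N_{v_j}$ obtained by refining $N_u(S_0)$ with the single ray $a_\beta$. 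This cone-by-cone bookkeeping is what replaces your locality argument; without it (or an equivalent), the inductive step does not go through.
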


\begin{lem} \label{l_konstrukcije}
If $N\in C_2$ is a maximal nested set which corresponds to a vertex of $F_\beta$, then $N$ is a maximal 0-nested set and $\beta\subseteq N$.
\end{lem}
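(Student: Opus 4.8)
The plan is to establish the two assertions of the lemma — the inclusion $\beta\subseteq N$, and the statement that $N$ is a maximal $0$-nested set — in that order.

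The inclusion $\beta\subseteq N$ is essentially formal. By Proposition~\ref{p_nestoedarglavni} the facets of $P_{\mathcal{B}_\beta}$ are properly labelled by $\mathcal{B}_\beta-\{[n+1]\}$ in the sense of Definition~\ref{d_proplabel}, so a vertex of $P_{\mathcal{B}_\beta}$ lies on the facet $f_B$ exactly when $B$ belongs to the maximal nested set attached to that vertex. Since $F_\beta=\bigcap_{B\in\beta}f_B$, any vertex lying on $F_\beta$ has a maximal nested set containing every $B\in\beta$; this is precisely $\beta\subseteq N$. The content of the lemma is therefore: a maximal nested set $N$ with respect to $\mathcal{B}_\beta-\{[n+1]\}$ that contains $\beta$ is a maximal chain of nonempty proper subsets of $[n+1]$, since these maximal chains are exactly the maximal $0$-nested sets.

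The core step is to prove $N$ is a chain. Write the members of $\beta$ as $\beta_{min}=\gamma_0\subset\gamma_1\subset\cdots\subset\gamma_{k-1}=\beta_{max}$ with $\gamma_j\setminus\gamma_{j-1}=\{i_{k-j}\}$. Reading off the definition of $\mathcal{B}_\beta$, every $X\in N\setminus\beta$ is of exactly one of three types: (a) a nonempty proper subset of $\beta_{min}$; (b) a set with $\beta_{max}\subsetneq X\subsetneq[n+1]$; (c) a singleton $\{v\}$ with $v\notin\beta_{min}$. First I would rule out type (c): if $v=i_{k-j}\in\beta_{max}\setminus\beta_{min}$, then $\{v\}$ and $\gamma_{j-1}\in\beta\subseteq N$ are incomparable but their union is $\gamma_j\in\beta\subseteq\mathcal{B}_\beta$, so the antichain $\{\{v\},\gamma_{j-1}\}$ violates the nested-set condition (its union should lie in $C_0-(\mathcal{B}_\beta-\{[n+1]\})$); and if $v\notin\beta_{max}$, then $\{v\}$ and $\beta_{max}\in N$ are incomparable with union either $[n+1]$, which is not in $C_0$, or a proper superset of $\beta_{max}$, which lies in $\mathcal{B}_\beta$ — in either case a contradiction. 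Among the surviving members, no two type-(a) sets are incomparable, since their union would be a nonempty subset of $\beta_{min}$, hence in $\mathcal{B}_\beta$; and no two type-(b) sets are incomparable, since their union would be a proper superset of $\beta_{max}$ lying in $\mathcal{B}_\beta$, or $[n+1]\notin C_0$. Finally every type-(a) set is contained in $\beta_{min}$, hence in every member of $\beta$ and in every type-(b) set, and $\beta$ is itself a chain; so all members of $N$ are pairwise comparable, i.e.\ $N$ is a chain.

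It remains to see this chain is maximal, i.e.\ that the cardinalities of its members are $1,2,\dots,n$. Since $N\supseteq\beta$ passes through $\beta_{min}$ and $\beta_{max}$ and $\beta$ joins them in unit cardinality steps, a missing cardinality — or a smallest member of size $>1$, or a largest member of size $<n$ — could only occur strictly below $\beta_{min}$ or strictly above $\beta_{max}$; in the first case a nonempty proper subset of $\beta_{min}$, in the second a proper superset of $\beta_{max}$ other than $[n+1]$, can be inserted, and every such set lies in $\mathcal{B}_\beta-\{[n+1]\}$. The enlarged collection is still a chain, hence still a nested set, contradicting maximality of $N$. So $N$ is a maximal chain of nonempty proper subsets of $[n+1]$, i.e.\ a maximal $0$-nested set. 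The only point requiring genuine care is eliminating the stray singletons of type (c) and checking that $\mathcal{B}_\beta-\{[n+1]\}$ really contains the sets needed to fill every gap below $\beta_{min}$ or above $\beta_{max}$; everything else is routine manipulation with the building-set axioms and the antichain condition for nested sets.
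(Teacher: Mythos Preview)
Your proof is correct and follows essentially the same approach as the paper: both deduce $\beta\subseteq N$ from the facet labelling, then eliminate singletons outside $\beta_{min}$ and show the remaining members form a chain by the same antichain computations. The only difference is that the paper cites $|N|=n$ from \cite{P09} to force the chain to be maximal, whereas you argue maximality directly by gap-filling below $\beta_{min}$ and above $\beta_{max}$; your version is marginally more self-contained but otherwise equivalent.
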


\begin{proof}
Let $v$ be a vertex of $F_\beta$ that corresponds to $N$. By Proposition~7.5 in \cite{P09}, $\lvert N \rvert =n$. Since $v\in f_B$ for every $B \in \beta$, we have that $\beta \subseteq N$.
In particular, if $k=n$, then $N=\beta$.  Otherwise, by Definition~\ref{d_nested_skup_u_odnosu_na_bilding_skup}, $N$ is obtained by enlarging $\beta$ with $n-k$ elements of $\mathcal{B}_\beta-[n+1]$ such that the union of every $N$-antichain belongs to $C_2-\mathcal{B}_\beta$.

Let $B_1$ and $B_2$ be two non-singleton elements of $\mathcal{B}_\beta-\{[n+1]\}$, which are contained in $N$ and incomparable. Then $\{\{B_1\},\{B_2\}\}$ is an $N$-antichain such that $B_1\cup B_2\supseteq \beta_{max}$ or $\beta_{min} \supseteq B_1\cup B_2$. Therefore, $N$ does not contain such a pair.

Then, let us suppose that $N$ contains two or more singletons. For every singleton $B=\{i_{j}\}$, where $i_{j}\in \beta_{max}-\beta_{min}$, there is $ A \in \beta$, such that $\{\{A\},\{B\}\}$ is an $N$-antichain whose union belongs to $\beta$.
Similarly, for every singleton $B=\{i_{j}\}$, where $i_{j}\in [n+1]-\beta_{max}$, we have that $\{\{\beta_{max}\},\{B\}\}$ is an $N$-antichain whose union has $\beta_{max}$ as a subset, and therefore, belongs to the building set.
It remains to check singleton subsets of $\beta_{min}$.
The union of every pair of singleton subsets of $\beta_{min}$, is also a subset of $\beta_{min}$, i.e.\ belongs to the building set. Then, just one of the singleton subsets of $\beta_{min}$ can belong to $N$ and all the other subsets of $N$ are mutually comparable.
\end{proof}  
Finally, let
  $\pi_{\beta,c}$ be the hyperplane
$$x_{i_1}+2x_{i_2}+\ldots+k(x_{i_k}+\ldots+x_{i_{k+l}})=m_\beta+c,$$
%and let $a_{\beta}$ be an outward normal to the halfspaces $\pi_{\beta,c}^{\geqslant}$.
where $c\in (0,1]$, and $a_{\beta}$ be an outward normal to the halfspaces $\pi_{\beta,c}^{\geqslant}$. 
\begin{rem}\label{r_haovi}
For every $c\in (0,1]$, the sum of outward normal vectors to the facets that contains $F_\beta$ is an outward normal vector to the halfspace $\pi_{\beta,c}^{\geqslant}$.  
\end{rem}

\begin{lem}\label{l_glavnal}
For an element
$\beta$ of $\mathcal{A}_2$, a polyope
$$P_{\mathcal{B}_\beta}\bigcap {\pi_{\beta,c}}^\geqslant$$ is an $F_\beta$-deformation of $P_{\mathcal{B}_\beta}$.
Moreover, the following holds:
\[
    P_{\mathcal{B}_\beta}\bigcap {\pi_{\beta,c}}^\geqslant =
\begin{cases}
   \emph{a parallel truncation }\emph{tr}_{F_\beta}P_{\mathcal{B}_\beta},& c\in (0,1)\\
   N_\beta,              & c=1.
\end{cases}
\]

\end{lem}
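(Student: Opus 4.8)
The plan is to prove the explicit identity first and to read off the $F_\beta$-deformation statement from it, using Example~\ref{e_trunkacijajedeformacija} in the range $c\in(0,1)$ and checking Definition~\ref{d_pideformacija} directly at $c=1$. Throughout one works inside the affine hyperplane $\{x\in\mathbf{R}^{n+1}\mid\sum_i x_i=\lvert\mathcal{B}_\beta\rvert\}$ and writes $\mathbf{1}_A$ for the $\{0,1\}$-indicator vector of $A\subseteq[n+1]$. By Proposition~\ref{p_burstabernejednakosti}, $P_{\mathcal{B}_\beta}$ is the intersection of the facet halfspaces $H_A^{\geqslant}=\{\langle\mathbf{1}_A,x\rangle\geqslant\lvert\mathcal{B}_{\beta \vert A}\rvert\}$ for $A\in\mathcal{B}_\beta-\{[n+1]\}$; by simplicity $F_\beta$ lies on exactly the facets $f_B$, $B\in\beta$, and Remark~\ref{r_haovi} is the observation that their normals sum to an outward normal $a_\beta$ of $\pi_{\beta,c}^{\geqslant}$. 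Since $P_{\mathcal{B}_\beta}$ is a Minkowski sum of the integral simplices $\Delta_B$, all its vertices are integral, so $\kappa_\beta$ is integer valued on $\mathcal{V}(P_{\mathcal{B}_\beta})$; combining this with Proposition~\ref{p_prekominimumaFbeta} and Corollary~\ref{c_prekominimumaNbeta} shows that $\mathcal{V}(P_{\mathcal{B}_\beta})$ splits into $\mathcal{V}(F_\beta)=\{v\mid\kappa_\beta(v)=m_\beta\}$ and the set $\{v\mid\kappa_\beta(v)\geqslant m_\beta+1\}$.

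For $c\in(0,1)$ the hyperplane $\pi_{\beta,c}=\{\kappa_\beta=m_\beta+c\}$ is beyond every vertex of $F_\beta$ and strictly beneath every other vertex, and $\{\kappa_\beta=m_\beta\}$ is a supporting hyperplane of $F_\beta$ parallel to $\pi_{\beta,c}$; hence, by the definition of truncation, $P_{\mathcal{B}_\beta}\cap\pi_{\beta,c}^{\geqslant}$ is the parallel truncation $\text{tr}_{F_\beta}P_{\mathcal{B}_\beta}$, which is an $F_\beta$-deformation of $P_{\mathcal{B}_\beta}$ by Example~\ref{e_trunkacijajedeformacija}.

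For $c=1$, the inclusion $N_\beta\subseteq P_{\mathcal{B}_\beta}\cap\pi_{\beta,1}^{\geqslant}$ is immediate from Corollary~\ref{c_prekominimumaNbeta} and integrality. For the reverse inclusion it is enough to see that the cut by $\pi_{\beta,1}$ creates no new vertices: a new vertex would lie in the relative interior of an edge of $P_{\mathcal{B}_\beta}$ meeting $\{\kappa_\beta=m_\beta+1\}$, and by integrality such an edge would join a vertex of $F_\beta$ to a vertex with $\kappa_\beta\geqslant m_\beta+2$. So the whole case hinges on the \textbf{Key Claim:} every vertex $w$ of $P_{\mathcal{B}_\beta}$ adjacent to a vertex of $F_\beta$ and not contained in $F_\beta$ satisfies $\kappa_\beta(w)=m_\beta+1$ (and, for the deformation statement, distinct such edges end at distinct vertices $w$). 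Granting it, $P_{\mathcal{B}_\beta}\cap\pi_{\beta,1}^{\geqslant}$ has vertex set exactly $\{v\mid\kappa_\beta(v)>m_\beta\}$ and therefore equals $N_\beta$; moreover $N_\beta$ is an $F_\beta$-deformation, since it is the intersection of the $H_A^{\geqslant}$ with $\pi_{\beta,1}^{\geqslant}$, its facet on $\pi_{\beta,1}$ is normally equivalent to the new facet of $\text{tr}_{F_\beta}P_{\mathcal{B}_\beta}$ (the two are combinatorially equivalent by the Key Claim and have parallel corresponding facets, so Proposition~\ref{p_paralelni_feseti} applies), and each vertex of $\text{tr}_{F_\beta}P_{\mathcal{B}_\beta}$ — an old vertex $v\notin F_\beta$, or a new vertex on an edge leaving $F_\beta$ — has its corresponding intersection of the hyperplanes of $N_\beta$ equal to a vertex of $N_\beta$, namely $v$ in the first case and, by the Key Claim, the outer endpoint $w$ of that edge in the second; this is exactly Definition~\ref{d_pideformacija}.

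To prove the Key Claim, let $v\in\mathcal{V}(F_\beta)$ correspond, via Lemma~\ref{l_konstrukcije}, to a maximal $0$-nested set $N=\{T_1\subsetneq\cdots\subsetneq T_n\}$ with $\lvert T_i\rvert=i$ and $\beta=\{T_{l+1},\dots,T_{k+l}\}$. In the simple polytope $P_{\mathcal{B}_\beta}$ the $n$ edges at $v$ correspond to the facets $f_{T_1},\dots,f_{T_n}$; the edge $E_j$ \emph{not} lying on $f_{T_j}$ leaves $F_\beta$ precisely when $l+1\leqslant j\leqslant k+l$, and then its other endpoint $w$ lies on the remaining $n-1$ facets, so $\langle\mathbf{1}_{T_i},w-v\rangle=0$ for all $i\neq j$. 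As $\beta-\{T_j\}\subseteq\{T_i\mid i\neq j\}$, this gives $\kappa_\beta(w)-\kappa_\beta(v)=\sum_{B\in\beta}\langle\mathbf{1}_B,w-v\rangle=\langle\mathbf{1}_{T_j},w-v\rangle$, a positive integer because $v\in f_{T_j}$, $w\notin f_{T_j}$ and both lie in $H_{T_j}^{\geqslant}$. Since $P_{\mathcal{B}_\beta}$ is a generalised permutohedron, $w-v=t(e_p-e_q)$ for an integer $t\geqslant1$ (\cite{P09}), and the relations $\langle\mathbf{1}_{T_i},w-v\rangle=0$ ($i\neq j$) together with the chain shape of $N$ force $\{p\}=T_j-T_{j-1}$ and $\{q\}=T_{j+1}-T_j$ (with $T_0=\emptyset$, $T_{n+1}=[n+1]$), so that $\kappa_\beta(w)-\kappa_\beta(v)=t$. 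What remains — and what I expect to be the main obstacle — is the equality $t=1$, equivalently that the facet of $P_{\mathcal{B}_\beta}$ on which $w$ lands has co-slack $1$ at $v$. This is a statement about the edge lengths of the nestohedron $P_{\mathcal{B}_\beta}$ near $F_\beta$; since its analogue fails for edges not incident to $F_\beta$, the argument must use the precise shape of $\mathcal{B}_\beta$ from Lemma~\ref{l_bildingzanest} (the proper subsets of $\beta_{min}$, the proper supersets of $\beta_{max}$, the chain $\beta$, and the singletons) together with the fact that $N$ is a full chain. The disjointness of the outer endpoints should come out of the same analysis, and the whole Key Claim can alternatively be obtained by substituting Postnikov's explicit coordinate formulas for the vertices $v_N$ and $v_{N_w}$ and computing $w-v$ directly.
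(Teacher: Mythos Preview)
Your overall architecture matches the paper's: both dispose of $c\in(0,1)$ via integrality and Example~\ref{e_trunkacijajedeformacija}, and both identify as the heart of the $c=1$ case the claim that every vertex $w$ of $P_{\mathcal{B}_\beta}$ adjacent to $F_\beta$ but outside it satisfies $\kappa_\beta(w)=m_\beta+1$. Your reduction of this to the edge-length equality $t=1$ via the generalised-permutohedron edge direction $w-v=t(e_p-e_q)$, together with the identification $\{p\}=T_j-T_{j-1}$, $\{q\}=T_{j+1}-T_j$, is correct and in fact a tidier packaging than the paper's.

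The genuine gap is that you stop there: you explicitly leave $t=1$ unproved, and this is the entire content of the lemma --- without it neither the equality $P_{\mathcal{B}_\beta}\cap\pi_{\beta,1}^{\geqslant}=N_\beta$ nor the $F_\beta$-deformation property follows. The paper closes exactly this gap by a direct case analysis. For $v\in\mathcal{V}(F_\beta)$ with maximal nested set $N_v$ (a full chain by Lemma~\ref{l_konstrukcije}) and adjacent $w\notin F_\beta$, the nested set $N_w$ differs from $N_v$ by swapping one element $S_v\in\beta$ for a new element $S_u$. Using the specific shape of $\mathcal{B}_\beta$ (Lemma~\ref{l_bildingzanest}), the paper shows $S_u$ is forced to be a particular singleton --- $\{i_{k-1}\}$ when $S_v=\beta_{\min}$ is itself a singleton; $\{i_{p-1}\}$ when $S_v$ is intermediate; $\{i_{n+1}\}$ when $S_v=\beta_{\max}$ --- and then reads off the relevant coordinates of $w$ from the facet equations $\sum_{i\in A}w_i=\lvert\mathcal{B}_{\beta\vert A}\rvert$ of Proposition~\ref{p_burstabernejednakosti}, obtaining $\kappa_\beta(w)=m_\beta+1$ in each of the three cases. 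In your language this simultaneously confirms your $p,q$ and proves $t=1$; the computation hinges on the count $\lvert\mathcal{B}_{\beta\vert A}\rvert-\lvert\mathcal{B}_{\beta\vert A'}\rvert=2$ for consecutive chain members $A'\subset A$ in $N_v$, which is special to this building set and, as you noted, fails for generic edges. The disjointness of outer endpoints (distinct $v$'s or distinct $S_v$'s give distinct $N_w$'s, hence distinct $w$'s) drops out of the same analysis, and the paper uses it exactly as you do to conclude that $N_\beta\cap\pi_{\beta,1}$ is a translate of the truncation facet. Your closing suggestion to ``substitute Postnikov's explicit coordinate formulas and compute $w-v$ directly'' is precisely what the paper carries out; you should do so rather than defer it.
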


\begin{proof}
Firstly, recall that each coordinate of some vertex of $P_{\mathcal{B}_\beta}$ is a natural number. This, together with Proposition~\ref{p_prekominimumaFbeta} and Corollary~\ref{c_prekominimumaNbeta}, implies that ${\pi_{\beta,c}}^\geqslant$ is beyond every vertex of $F_\beta$ and beneath every vertex of $N_\beta$, for every $c \in (0,1)$. Hence, $P_{\mathcal{B}_\beta}\bigcap {\pi_{\beta,c}}^\geqslant$ is a truncation of $P_{\mathcal{B}_\beta}$ in its face $F_\beta$. 
Since $\pi_{\beta,0}$ defines $F_\beta$, all these truncations are parallel, and hence, all of them are $F_\beta$-deformations of $P_{\mathcal{B}_\beta}$ (see Example~\ref{e_trunkacijajedeformacija}).
Also, 
\[
P_{\mathcal{B}_\beta}\bigcap {\pi_{\beta,1}}^\geqslant =N_\beta.
\]

Let $\{U,W\}$ be a partition of the set $\mathcal{V}(N_\beta)$ such that all the elements of $U$ are adjacent to $F_\beta$ in $P_{\mathcal{B}_\beta}$. To be precise, $u\in U$ if and only if there exists $v\in \mathcal{V} (F_\beta)$ such that $u$ and $v$ are adjacent in $P_{\mathcal{B}_\beta}$.
Let $u=(u_1,\dots,u_{n+1})$ be an element of $U$ and $v=(v_1,\dots,v_{n+1})$ be a vertex of $F_\beta$ adjacent to $u$ in $P_{\mathcal{B}_\beta}$. There are two maximal nested sets $N_v,N_u\in C_2$ corresponding to $v$ and $u$, respectively. By Lemma~\ref{l_konstrukcije}, $N_v$ is a maximal 0-nested set containing $\beta$ as a subset.
Since $\lvert N_v \rvert =\lvert N_u \rvert =n$ (see \cite[Proposition~7.5]{P09}) and $u$, $v$ are adjacent, we have that $\lvert N_v \cap N_u \rvert =n-1$, which entails that $N_u$ can be obtained from $N_v$ by substituting an element $S_v$ for another element $S_u$ of $(\mathcal{B}_\beta-[n+1])-\beta$. Since $\beta \nsubseteq N_u$,
$S_v \in \beta$. 
Moreover, following the proof of Lemma~\ref{l_konstrukcije}, we can verify that $S_u$ is a singleton. We conclude that for two distinct vertices of $F_\beta$, there is no element of $U$ adjacent to both of them.

Now, we show that $u\in \pi_{\beta,1}$, for every $u \in U$. 
Let $N_v$ be
\[
\bigl\{\{i_{n},\ldots,i_1\},\dots,\{i_{n},i_{n-1}\},\{i_n\}
\bigr\}
\]
such that $\beta\subseteq N$.
From $v\in F_\beta$ and $u\notin F_\beta$, we have that $$
m_\beta(v)=v_{i_1}+2v_{i_2}+\ldots+k(v_{i_k}+\ldots+v_{i_{k+l}})=m_\beta$$
and
$$m_\beta(u)=u_{i_1}+2u_{i_2}+\ldots+k(u_{i_k}+\ldots+u_{i_{k+l}})>m_\beta .$$
For every element $A$ of the set $N_v \cap N_u=N_v-\{S_v\}=N_u-\{S_u\}$,  we have that $u,v \in f_A$. This, together with Proposition~\ref{p_burstabernejednakosti}, entails the following:
\[\begin{array}{lr}
(\star) &
\sum\limits_{i\in A}u_i=\sum\limits_{i\in A} v_i=\lvert \mathcal{B}_{\beta \vert A} \rvert.
\end{array}\]
From Proposition~\ref{p_burstabernejednakosti}, we also have
\[\begin{array}{lr}
(\star\star) &  u_1+\ldots+u_{n+1}=v_1+\ldots+v_{n+1}=\lvert \mathcal{B}_{\beta \vert [n+1]} \rvert .
\end{array}\]
Let $S_v=\{i_{k+l},\ldots,i_{p}\}$, where $1\leqslant p \leqslant k$. Then, by $(\star)$,
\[
m_\beta(u)=m_\beta-(v_{i_p}+v_{i_{p+1}}+\ldots+v_{i_{k+l}})+(u_{i_p}+u_{i_{p+1}}+\ldots+u_{i_{k+l}}).
\]
Let us analyse all possible cases.

\vspace{2ex}

(1) If $S_v$ is a singleton, i.e.\ $p=k=n$ and $l=0$, then
\[
m_\beta(u)= m_\beta-v_{i_k}+u_{i_k}.
\]
Since $\{i_k\}\in N_v$, by Proposition~\ref{p_burstabernejednakosti}, $v_{i_k}=\lvert  \mathcal{B}_{\beta \vert \{i_k\}} \rvert =1$.
Following the proof of Lemma~\ref{l_konstrukcije}, one may verify that $N_u$ is a nested set if and only if $S_u=\{i_{k-1}\}$. This entails that $u \in f_{\{i_{k-1}\}}$, i.e.\ applying Proposition~\ref{p_burstabernejednakosti},
$$
u_{i_{k-1}}=\lvert \mathcal{B}_{\beta \vert \{i_{k-1}\}}\rvert =1.
$$
Having that $\{i_k,i_{k-1}\}\in N_u \cap N_v$ and applying Proposition~\ref{p_burstabernejednakosti}, we obtain
\[
u_{i_k}=\lvert \mathcal{B}_{\beta \vert \{i_k,i_{k-1}\}} \rvert -u_{i_{k-1}}=3-1=2.
\]
Therefore, $m_\beta(u)= m_\beta-1+2=m_\beta+1$.

\vspace{2ex}

(2) If $S_v$ is not a singleton, then $\{i_{k+l},\ldots,i_{p+1}\} \in N_v\cap N_u$. Applying $(\star)$, we obtain
\[
u_{i_{p+1}}+\ldots+u_{i_{k+l}}=v_{i_{p+1}}+\ldots+v_{i_{k+l}},
\]
which implies
\[
m_\beta(u)=m_\beta-v_{i_p}+u_{i_p}.
\]
Having that $S_v,\{i_{k+l},\ldots,i_{p+1}\}\in N_v$ and applying Proposition~\ref{p_burstabernejednakosti}, we get the following equations:
\[
v_{i_p}=\lvert \mathcal{B}_{\beta \vert S_v} \rvert -\lvert \mathcal{B}_{\beta \vert S_v-\{i_p\}}\rvert =\lvert \mathcal{B}_{\beta \vert  S_v-\{i_p\}}\rvert +2-\lvert \mathcal{B}_{\beta \vert S_v-\{i_p\}}\rvert =2.
\]

\vspace{1ex}

(2.1) If $\lvert S_v \rvert \neq n$, then we follow the proof of Lemma~\ref{l_konstrukcije} in order to analyse the form of $N_u$. In that manner, we conclude that $N_u$ is a nested set if and only if $S_u=\{i_{p-1}\}$. This entails that $u \in f_{\{i_{p-1}\}}$, i.e.\ applying Proposition~\ref{p_burstabernejednakosti},
$$
u_{i_{p-1}}=\lvert \mathcal{B}_{\beta \vert \{i_{p-1}\}} \rvert =1.
$$
Having that $\{i_{k+l},\ldots,i_{p-1}\},\{i_{k+l},\ldots,i_{p+1}\} \in N_v \cap N_u$ and applying $(\star)$, we obtain the following equations:
\[
u_{i_{k+l}}+\ldots+u_{i_{p-1}}=\lvert  \mathcal{B}_{ \beta \vert S_v\cup \{i_{p-1}\}} \rvert =\lvert \mathcal{B}_{\beta \vert S_v} \rvert  +2,
\]
\[
u_{i_{k+l}}+\ldots+u_{i_{p+1}}=\lvert  \mathcal{B}_{\beta \vert S_v - \{i_p\}} \rvert =\lvert \mathcal{B}_{\beta \vert S_v} \rvert  -2.
\]
Hence, $
u_{i_{p}}+u_{i_{p-1}}=4$, and therefore,
$$m_\beta(u)=m_\beta-2+(4-1)=m_\beta+1.$$

\vspace{1ex}

(2.2) If $\lvert S_v \rvert = n$, i.e.\ $p=1, k+l=n$, then we again follow the proof of Lemma~\ref{l_konstrukcije} and conclude that $N_u$ is a nested set if and only if $S_u=\{i_{n+1}\}$. Therefore, $u \in f_{\{i_{n+1}\}}$, i.e.\ applying Proposition~\ref{p_burstabernejednakosti},
$$
u_{i_{n+1}}=\lvert \mathcal{B}_{\beta \vert \{i_{n+1}\}} \rvert =1.
$$
Having that $\{i_{n},\ldots,i_{2}\} \in N_v \cap N_u$ and applying $(\star)$, we obtain that
\[
u_{i_{n}}+\ldots+u_{i_{2}}=\lvert \mathcal{B}_{\beta \vert S_v - \{i_{1}\}} \rvert =\lvert  \mathcal{B}_{\beta \vert  S_v} \rvert -2=\lvert  \mathcal{B}_{\beta \vert [n+1]} \rvert -4.
\]
This, together with $(\star\star)$, entails that
\[
u_{i_{n+1}}+u_{i_{1}}=\lvert \mathcal{B}_{\beta \vert [n+1]} \rvert -\bigl(\lvert \mathcal{B}_{\beta \vert [n+1]} \rvert -4 \bigr)=4.
\]
Hence, $m_\beta(u)=m_\beta-v_{i_1}+u_{i_1}=
m_\beta-2+(4-1)=m_\beta+1$.

\vspace{2ex}

All this entails that for every $u \in U$, $m_\beta(u)=m_\beta+1$, i.e.\ $conv U \in \pi_{\beta,1}$. 
%implies that
%\[\lvert U \rvert= k\lvert \mathcal{V}(F_\beta) \rvert \geqslant n.\]
We can conclude that $N_\beta \cap \pi_{\beta,1}$ is exactly $conv U$. Otherwise, i.e.\ if there would exist an element $w\in W$ contained in $\pi_{\beta,1}$, then since $w$ is not adjacent to $F_{\beta}$, this vertex of $P_{\mathcal{B}_\beta}$ would be contained in the convex hull of $U$, which would be contradiction. Also, note that $\mathcal{V}(conv U)=U$, because each element of $U$ is a vertex of $P_{\mathcal{B}_\beta}$. Therefore, $conv U$ is an $(n-1)$-face of $N_\beta$. Let $c$ be an arbitrary element of the interval $(0,1)$ and let us denote by $f$ the facet of the truncation $P_{\mathcal{B}_\beta} \cap \pi_{\beta,c}^{\geqslant}$ contained in the truncation hyperplane. Since we have already concluded that for two distinct vertices of $F_\beta$ there is no element of $U$ adjacent to both of them, we now can conclude that for two distinct vertices of $f$ there is no element of $U$ adjacent to both of them in the truncation. Hence, since $\pi_{\beta, c}$ and $\pi_{\beta,0}$ are parallel, $conv U$ is a translate of $f$. In other words, 
%Since $\mathcal{V}(N_\beta)\subset \mathcal{V}(P_{\mathcal{B}_\beta})$, all the other facets of $N_\beta$ are the facets of $P_{\mathcal{B}_\beta}$. 
$N_\beta$ can be obtained from the truncation by parallel translation of the facet $f$ without crossing the vertices. According to Remark~\ref{r_deformacijagruboreceno}, $N_\beta$ is an $F_\beta$-deformation of $P_{\mathcal{B}_\beta}$.
\end{proof}

\begin{rem}\label{r_dimenzijaNbeta}
According to Definition~\ref{d_nested_skup_u_odnosu_na_bilding_skup}, the set
\[
\bigl\{\{i_1\},\ldots,\{i_{n+1}\} \bigr\}-\bigl\{\{i_n\} \bigr\}
\]
is a maximal nested set, which corresponds to some element of the set $W$ (defined in the previous proof). Hence, $W\neq \emptyset$, i.e.\ $N_\beta$ is an $n$-polytope with the facet $conv U\in \pi_{\beta,1}$. 
\end{rem}

%\begin{lem}\label{l_odnosmedjudeformacijama}
%Let $T=\emph{tr}_{F_k}(\ldots \emph{tr}_{F_1}P_1)$ be a polytope obtained from the polytope $P_1\in \mathcal{M}_n$ by the sequence of $k$ parallel truncations in its faces $F_1,\ldots,F_k$. If $F$ is a face of $P_1$ such that $F\neq F_i$ for every $i \in[k]$, and $P_2\in \mathcal{M}_n$ is an $F$-deformation of $P_1$, then $P_2$ is an $F$-deformation of $T$.
%\end{lem}
%\begin{proof}
%\end{proof}

\begin{lem}\label{l_normalezaglavnut}
Let $A=\{a_1,\ldots,a_n\}$ be the spanning set of vectors for an $n$-cone in $\mathbf{R}^n$, and let $h_I$ be the vector defined as 
$$h_I=\sum\limits_{i\in I} a_i,$$ 
where $I\subseteq [n]$ and $\lvert I \rvert \geqslant2$. The following claims hold.
\begin{enumerate}

\item[\emph{(i)}] For every two subsets $I,J \subseteq [n]$ such that $I\subset J$, the vector $h_J$ is contained in the cone spanned by the set $\{h_I\}\cup\{a_i \mid i\in J-I\}$.  

\item[\emph{(ii)}] 
For $2 \leqslant m\leqslant n-1$, let $I_1,\ldots,I_{m} \subseteq [n]$ such that $I_1 \supset I_2 \supset\ldots \supset I_{m}$, and let $A_1$ be the set obtained from $A$ by replacing $m$ elements with the vectors $h_{I_1},\ldots,h_{I_{m}}$. If $A_1$ spans an $n$-cone $N_1$, 
then for every $1\leqslant k<m$ 
there is exactly one element $i\in I_k-I_{k+1}$ such that the set obtained from $A_1$ by replacing $h_{I_k}$ with $a_{i}$, spans an $n$-cone $N_2$ which contains $N_1$.

\end{enumerate}
\end{lem}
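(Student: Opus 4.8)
The plan is to reduce everything to a normalised situation and then argue combinatorially. A linear automorphism of $\mathbf{R}^n$ carries cones to cones, preserving dimensions and the inclusion relation, and it sends $\sum_{i\in I}a_i$ to the analogous sum of the images; hence we may assume without loss of generality that $a_i=e_i$ is the $i$-th standard basis vector, so that $h_I$ is the $0$--$1$ indicator vector of $I$ and every cone in the statement is generated by indicator vectors of subsets together with standard basis vectors. With this normalisation, part (i) is immediate: since $I\subseteq J$ one has $h_J=h_I+\sum_{i\in J-I}a_i$, which exhibits $h_J$ as a nonnegative combination of $h_I$ and the $a_i$ with $i\in J-I$.

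For part (ii) I would write $A_1=\{h_{I_1},\dots,h_{I_m}\}\cup\{a_j\mid j\in[n]-R\}$, where $R\subseteq[n]$ with $\lvert R\rvert=m$ is the set of indices of the removed basis vectors. The telescoping identities $h_{I_q}-h_{I_{q+1}}=\sum_{i\in I_q-I_{q+1}}a_i$ (with the convention $I_{m+1}=\emptyset$) show that $\operatorname{span}\{h_{I_1},\dots,h_{I_m}\}$ coincides with the span of the indicator vectors of the pairwise disjoint ``blocks'' $I_1-I_2,\dots,I_{m-1}-I_m,I_m$, whose union is $I_1$. A dimension count for $\operatorname{span}\{h_{I_p}\}+\operatorname{span}\{a_j\mid j\notin R\}$ then yields the key combinatorial fact: \emph{$A_1$ spans an $n$-cone if and only if $R$ is a transversal of these blocks}, i.e.\ $\lvert R\cap(I_q-I_{q+1})\rvert=1$ for every $q$ and $R\subseteq I_1$. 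Write $r_q$ for the unique element of $R\cap(I_q-I_{q+1})$; so, assuming $A_1$ spans an $n$-cone, $r_k\in I_k-I_{k+1}$ is well defined.

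Now fix $k$ with $1\leqslant k<m$. First I would rule out every $i\in(I_k-I_{k+1})-\{r_k\}$: such an $i$ lies outside $R$, so $a_i$ already belongs to $A_1-\{h_{I_k}\}$, hence $(A_1-\{h_{I_k}\})\cup\{a_i\}$ has only $n-1$ elements and cannot span an $n$-cone. Next I would check that $i=r_k$ works. The set $A_1'=(A_1-\{h_{I_k}\})\cup\{a_{r_k}\}$ has $n$ elements, and deleting $h_{I_k}$ either merges the blocks $I_{k-1}-I_k$ and $I_k-I_{k+1}$ into $I_{k-1}-I_{k+1}$ (if $k\geqslant2$) or discards the block $I_1-I_2$ (if $k=1$); in either case $R-\{r_k\}$ remains a transversal of the new block structure, so the same dimension count shows that $A_1'$ spans an $n$-cone $N_2$. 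Finally, $N_2\supseteq N_1$ reduces to checking that the single missing generator $h_{I_k}$ of $N_1$ lies in $N_2$: from $h_{I_k}=h_{I_{k+1}}+\sum_{i\in I_k-I_{k+1}}a_i$ we see that $h_{I_{k+1}}$ is a generator of $N_2$ and that every $a_i$ with $i\in I_k-I_{k+1}$ is also a generator of $N_2$ (it is $a_{r_k}$ when $i=r_k$, and an $a_j$ with $j\in[n]-R$ otherwise), so $h_{I_k}$ is a nonnegative combination of generators of $N_2$. Hence $i=r_k$ is the unique admissible choice, which proves (ii).

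The main obstacle is the combinatorial bookkeeping around the blocks: establishing precisely that ``$A_1$ spans an $n$-cone'' is equivalent to ``$R$ is a transversal of the blocks'', and then correctly handling the merging or discarding of a block when $h_{I_k}$ is removed, including the boundary case $k=1$. The remaining steps are short linear-algebra computations with $0$--$1$ vectors.
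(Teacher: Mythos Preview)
Your proof is correct and follows essentially the same route as the paper's: both establish via a pigeonhole/counting argument that the $m$ removed basis vectors form a transversal of the blocks $I_1-I_2,\dots,I_{m-1}-I_m,I_m$, and both deduce the cone inclusion from the identity $h_{I_k}=h_{I_{k+1}}+\sum_{i\in I_k-I_{k+1}}a_i$ (which is exactly part~(i)). Your preliminary change of basis to $a_i=e_i$ is harmless but unnecessary, and your treatment is in fact more thorough than the paper's terse version---you explicitly rule out the choices $i\neq r_k$ and verify that $A_1'$ again spans an $n$-cone via the transversal criterion, points the paper leaves implicit.
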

\begin{proof}
The first claim follows directly from the fact that $h_J=h_I+\sum\limits_{i \in J-I}a_i$. For $1\leqslant k<m$, let $\Delta_k=I_k-I_{k+1}$. Since $I_1 \supset I_2 \supset\ldots \supset I_{m}$, the sets $\Delta_k$ are mutually disjoint. 
%we may assume that $I_k-I_{k+1}=[t]-[p]=\{p+1,\ldots,t\}$. 
Since $h_{I_m}$ is a spanning ray of $N_1$, there is at least one element of the set $\{a_i\mid i \in I_m\}$ which is not contained in $A_1$. Also, at least one element of the set $\{a_i\mid i \in \Delta_k\}$ does not belong to $A_1$; otherwise, by the claim (i), $h_{I_k}$ would not be a spanning ray of $N_1$. If we suppose that for some $k$ there are two or more such elements, then there is more than $m$ elements of $A$ that are not contained in $A_1$. This contradicts the assumption $\lvert A-A_1\rvert=n-m$. Using the claim (i), it remains to conclude that $N_2$ is an $n$-cone which contains each of the spanning vectors of $N_1$, i.e.\ which contains $N_2$. 
\end{proof}

\begin{proof} [Proof of Theorem~\ref{t_glavnat}]
%For the function $\varphi: \mathcal{B}_1\rightarrow \mathcal{M}_{n+1}$ be the function

%\[
%    \varphi(\beta)=
%\begin{cases}
 %   \Delta_{\bigcup\beta},& \text{if } \beta\in \mathcal{A}_1\\
 %   N_\beta,              & \text{otherwise}.
%\end{cases}
%\]
%meni je potpuno nebitna injektivnost za rad
%For an element $\beta$ of $\mathcal{A}_1$, we have that $\beta=\{B\}$ for some $B\subset [n+1]$. Hence, $\Delta_{\bigcup\beta}$ is
%the $(\lvert B \rvert -1)$-simplex $\Delta_B$ and it is easy to see that
%$$\Delta_{\bigcup\beta_1}=\Delta_{\bigcup\beta_2} \Rightarrow\beta_1=\beta_2,$$
%for $\beta_1,\beta_2 \in \mathcal{A}_1$. Also, if we assume that for two elements $\beta_1,\beta_2\in \mathcal{A}_2$ $N_{\beta_1}=N_{\beta_2}$ holds, then, by Lemma~\ref{l_glavnal} and Definition~\ref{d_pideformacija}(i), $F_{\beta_1}=F_{\beta_2}$, i.e.\ $\beta_1=\beta_2$. Since $N_\beta$ is an $n$-polytope (see Remark~\ref{r_dimenzijaNbeta}), we conclude that $\varphi$ is injective. Hence, $PA_{n,1}$ is well formed according to Defnition~\ref{d_Mink_realizacija}.
It is obvious that $PA_{n,1}$ is well formed according to Defnition~\ref{d_Mink_realizacija}(ii). Since each summand is either $d$-simplex $\Delta_{\bigcup\beta}$, $d<n$, or $n$-polytope $N_{\beta}$ (see Remark~\ref{r_dimenzijaNbeta}), applying Corollary~\ref{prop_minkowski_svojstva2}(ii) we conclude that the sum $PA_{n,1}$ is at least $n$-dimensional.

Let us consider the partial sum
\[
S_0=\Delta_{[n+1]}+\sum_{\beta\in \mathcal{A}_1} \Delta_{\bigcup\beta}.
\]
It is an $n$-permutohedron (see the end of Section~3). More precisely, by Proposition~\ref{p_burstabernejednakosti} (or \cite[Theorem~1.5.4]{BP15}), $S_0$ is the intersection of the following $l=2^{n+1}-2$ halfspaces
\[
\alpha_j^{\geqslant}: \langle a_j,x\rangle\geqslant b_j,  \;   \; 1\leqslant j\leqslant l, 
\]
where for every $j \in[l]$ there is $B\subset[n+1]$ such that $\alpha_j^{\geqslant}$ is the halfspace
\[
H_{B}^{\geqslant}=\bigl\{x \in \mathbf{R}^{n+1} \mid \sum_{i\in B} x_i\geqslant 2^{\lvert B \rvert }- 1\bigr\}.
\]
Each of these halfspaces is facet-defining, i.e.\ determines the facet $f_B$ of $S_0$. According to Definition~\ref{d_Mink_realizacija}, $S_0$ is an $n$-dimensional Minkowski-realisation of the simplicial complex $C_1$.
Let us relabel its facets by the corresponding elements of $\mathcal{A}_1$--the facet $f_B$ is labelled by $\{B\}$. By Corollary~\ref{c_labele_odPA_n}, each of them is parallel to the equilabelled facet of $\mathbf{ PA}_n$.  

Before we show that indexed set of the remaining summands is a truncator set of summands for this permutohedron, we firstly show that for an arbitrary $\beta \in \mathcal{A}_2$, $N_{\beta}$ is a truncator summand for $S_0$.

Recall that we can obtain $S_0$ by a sequence of parallel truncations of the nestohedron $P_{\mathcal{B}_{\beta}}$, up to normal equivalence. In other words, $P_{\mathcal{B}_{\beta}}$ can be obtained from $S_0$ by a sequence of parallel translations of the facets without crossing over the vertices. Formally, $P_{\mathcal{B}_{\beta}}$ can be defined as the intersection of the halfspaces
\[
\gamma_j^{\geqslant}:\langle a_j,x\rangle\geqslant c_j, \; 1 \leqslant j \leqslant l, 
\]
such that 
%the first $k$ of them are facet-defining, and 
for every vertex of $S_0$ which is the intersection of the hyperplanes $\alpha_j$, $j \in J\subset [l]$, the intersection of the hyperplanes $\gamma_j$, $j \in J$, is a vertex of $P_{\mathcal{B}_{\beta}}$. 
Since $F_{\beta}$ is the intersection of the facets of $P_{\mathcal{B}_{\beta}}$ indexed by the elements of $\beta$ which are mutually comparable, there exists the corresponding the same dimensional face $F$ of $S_0$ (the intersection of the facets indexed by the same elements), i.e.\ for every facet of $P_{\mathcal{B}_{\beta}}$ containing $F_\beta$, there is the corresponding facet of $S_0$ containing $F$ with the same outward normals. Then, applying Lemma~\ref{l_glavnal} for some \mbox{$c\in (0,1)$}, we conclude that there is a parallel truncation tr$_FS_0=S_0 \cap \alpha_0^{\geqslant}$ such that $a_\beta$ is an outward normal to $\alpha_0^{\geqslant}$. 
This, together with Definition~\ref{d_pideformacija}(ii) and the fact that $N_{\beta}$ is an $F_{\beta}$-de\-for\-ma\-tion of the nestohedron, implies that $N_{\beta}$ can be obtained from tr$_FS_0$ by parallel translations of the facets without crossing over the vertices. Also, by Definition~\ref{d_pideformacija}(i), we have that $S_0\cap \alpha_0 \simeq P_{\mathcal{B}_{\beta}} \cap \pi_{\beta,c} \simeq N_{\beta} \cap \pi_{\beta,1}$. Therefore, according to Definition~\ref{d_pideformacija}, $N_{\beta}$ is an $F$-deformation of $S_0$, and hence,
%by Lemma~\ref{l_deformacijajesingleprofinjenje}, $S_0+P_1\simeq$ tr$_FS_0$ and $P_{1}$ is a truncator summand for $S_{0}$, according to Definition~\ref{d_summand_produces_a_facet}. 
by Proposition~\ref{l_deformacijajesingleprofinjenje}, $N_{\beta}$ is a truncator summand for $S_0$ in $F$, i.e.\ $S_0+N_{\beta} \simeq \text{tr}_FS_0$.

Now, for $m=\lvert \mathcal{A}_2, \rvert $ let $x: [m]\longrightarrow \mathcal{A}_2$ be an indexing function such that $\lvert x(i) \rvert  \geqslant \lvert x(j) \rvert$ for every $i<j$. Then, let $\{Q_i\}_{i \in [m]}$ be an indexed set of polytopes such that $Q_i=N_{x(i)}$.
We show that this indexed set is a truncator set of summands for the permutohedron
$S_0$, which entails that Definition~\ref{d_Mink_realizacija}(iii) is satisfied. 

Starting from the permutohedron $S_0$, let $S_1$ be the partial sum $S_0+Q_1$, and for the sake of simplicity, let $\beta$ denotes $x(1)$. From the above conclusion, we have that $S_1 \simeq \text{tr}_FS_0$, where tr$_FS_0$ is a parallel truncation in the face that corresponds $F_\beta$ and $a_{\beta}$ is an outward normal to the truncation halfspace.
 
We iteratively repeat the following for every $2\leqslant i \leqslant m$. To be precise, at $i$th step, let $S_{i}=S_{i-1}+Q_{i}$ and suppose that for every $j<i$ we have that \mbox{$S_{j}\simeq \text{tr}_FS_{j-1}$}, where tr$_FS_{j-1}$ is a parallel truncation in the face that corresponds to $F_{x(j)}$. Again, let $\beta=x(i)$. 
%There is no facet of $S_{i-1}$ defined by a halfspace parallel to ${\pi_{x(i),1}}^\geqslant $. 
As long as the cardinality of $\beta$ is maximal, i.e.\ as long as $F_{\beta}$ and the corresponding face $F$ of $S_{i-1}$ are vertices, 
%of the corresponding nestohedron and $S_{i-1}$, respectively,
we may apply completely analogous reasoning as above and obtain that $Q_i$ is an $F$-de\-for\-mation of $S_{i-1}$, which, together with Proposition~\ref{l_deformacijajesingleprofinjenje}, implies $S_{i}\simeq \text{tr}_FS_{i-1}$. Since $\mathcal{B}_1$ contains all maximal 0-nested sets, we can notice that each vertex of $S_0$ is truncated. 
%To be precise, $S_{i-1}$ can be presented as the intersection of the following facet-defining halfspaces
%\[\delta_j^{\geqslant}: \langle a_j,x\rangle\geqslant d_j,  \;   \; 1\leqslant j\leqslant l+i-1. \]

Suppose that $k=\dim(F_\beta)>0$. Since all truncations at the previous steps were in faces of lower dimensions, there exists the corresponding face $F$ of $S_{i-1}$. Also, since all truncations were parallel and the normal equivalences held, $P_{\mathcal{B}_\beta}$
%can be defined as the intersection of the following halfspaces
%\[\sigma_j^{\geqslant}: \langle a_j,x\rangle\geqslant e_j,  \;   \; 1\leqslant j\leqslant l+i-1,\] 
still can be obtained from $S_{i-1}$ by parallel translations of the facets without crossing over the vertices, but the second part of Definition~\ref{d_pideformacija}(i) does not hold generally. It means that we can not conclude that $N_\beta$ is an $F$-deformation of $S_{i-1}$. 
%The sum $S_{i-1}$ can be obtained from $S_0$ by a sequence of parallel truncations, up to normal equivalence. Therefore, by completely analogous reasoning as in the case $i=0$, we conclude the following. There is the corresponding face $F$ of $S_{i-1}$ and the halfspace $\pi$ with the same outward normal as $\pi_{x(i),1}$ such that tr$_FS_{i-1}=S_{i-1}\cap {\pi}^\geqslant$ is a parallel truncation. 
%However, $S_{i}\simeq$ tr$_FS_{i-1}$ still holds and we prove that by showing that each 
Thus, in order to prove that $S_{i}\simeq \text{tr}_FS_{i-1}$ still holds, we use Proposition~\ref{p_dovoljnomakskonuseposmatrati}. Since $Q_i$, $S_{i-1}$ and tr$_FS_{i-1}$ are $n$-polytopes, without lose of generality, we consider the union of all normal cones in each of their fans as $\mathbf{R}^n$.    
%we show that if $N=N_{v}(prvi)\cap N_{v_2}(drugi)$ is a maximal cone for arbitrary $v\in mathcal{V}(prvi)$ and $v\in mathcal{V}(prvi)$, then $N \in \mathcal(N)(tr)$ and there is no maximal normal cone in ... which can not be obtained in that manner.
%the sets of maximal normal cones in $\mathcal{N}(S_{i-1}+Q_i)$ and $\mathcal{N}(\text{tr}_FS_{i-1})$ are equal. 
%let us compare the fans $\mathcal{N}(S_{i-1})$ and $\mathcal{N}(Q_i)$ with the fan $\mathcal{N}(S_0)$.

Firstly, we consider all normal cones to $Q_i$ at vertices not contained in $\pi_{\beta,1}$. Let $N$ be one of them. Since $Q_i$ is an $F'$-deformation of $S_0$, where $F'$ is the corresponding face of $S_0$, by Lemma~\ref{l_zvezda}, $N$ is the union of the normal cones to tr$_{F'}S_0$ at vertices not contained in the truncation hyperplane, which are the normal cones to $S_0$ at vertices not contained in $F'$. 
%, such that the normal cones to tr$_F'S_0$ at the vertices contained in the truncation hyperplane u razlicitom max konusu od $Q_i$, tj. jedini zrak koji je u $Q_i$ a nije u $S_0$ je bas $a_\beta$.
Since $S_{i-1}$ is obtained from $S_0$ by the sequence of parallel truncations, up to normal equivalence, by Lemma~\ref{l_konusiparalelnetrunkacije}, $N$ is the union of the normal cones $N_1,\ldots,N_t$ to $S_{i-1}$ at vertices not contained in $F$. Then, for every $i\in [t]$, $N \cap N_i$ is $N_i$, the normal cone to tr$_FS_{i-1}$ at a vertex not contained in the truncation hyperplane.
%and the intersection of $N$ and any other maximal normal cone to $S_{i-1}$ is not a maximal cone. Also, since each $N_i$ is contained in $N$, its intersection with the other maximal normal cones in the same fan is not a maximal cone, and every maximal normal cones to tr$_FS_{i-1}$ at a vertex not contained in the truncation hyperplane can be obtained as described intersection. 
Therefore, Proposition~\ref{p_dovoljnomakskonuseposmatrati}(ii) is satisfied for the considered maximal normal cones to $Q_i$.

Now, let $N$ be the normal cone to $Q_i$ at a vertex contained in $\pi_{\beta,1}$. By Lemma~\ref{l_zvezda}, $N=N'\cup N_v$, where $N_v$ is the normal cone to tr$_{F'}S_0$ at a vertex contained in the truncation hyperplane, while $N'$ is the union of the normal cones to tr$_{F'}S_0$ at vertices not contained in that hyperplane, i.e.\ the union of the same normal cones to $S_0$ at vertices not contained in $F'$. As above, $N'$ is the union on maximal normal cones to $S_{i-1}$ which are the normal cones to tr$_{F}S_{i-1}$ at vertices not contained in the truncation hyperplane, and hence, their intersections with $N$ are these cones themselves. 
%It implies that Proposition~\ref{p_dovoljnomakskonuseposmatrati}(i) is satisfied for each normal cone to tr$_FS_{i-1}$ at a vertex not contained in the truncation hyperplane (for more details see the proof of Proposition~\ref{l_deformacijajesingleprofinjenje}).

Any other $n$-cone, which can be obtained as the intersection of $N$ and some maximal normal cone to $S_{i-1}$, is the intersection of that cone and $N_v$.  
In order to analyse such cases, let $u$ be a vertex of $S_0$ contained in $F'$. Since $S_0$ is simple polytope, its $k$-face $F'$ belongs to exactly $p=n-k$ facets. Without lose of generality, we assume that they are defined by the halfspaces $\{\alpha_j^{\geqslant} \mid j \in [p]\}$, while $u$ is the intersection of the hyperplanes $\{\alpha_j\mid j \in [n]\}$. Then, every element of the set $\{-a_j \mid j \in [p]\}$ is a spanning ray of the normal cone to $S_{i-1}$ at a vertex contained in $F$.
Since $F'$ is also simple, there are exactly $k$ vertices adjacent to $u$ in $F'$, which implies that there are exactly $p$ vertices of $S_0$ adjacent to $u$ that do not belong to $F'$. Hence, by Lemma~\ref{l_konusiparalelnetrunkacije}, $N_u(S_0)$ is the union of $p$ normal cones $N_{v_1},...,N_{v_p}$ to tr$_{F'}S_0$ at the corresponding vertices contained in the truncation hyperplane. Exactly their intersections with an arbitrary maximal normal cone $M$ to $S_{i-1}$ remain to be considered. 
In order to prove that these intersections also satisfy Proposition~\ref{p_dovoljnomakskonuseposmatrati}(ii), it is enough to show the following: if $M$ is the normal cone to $S_{i-1}$ at a vertex not contained in $F$, i.e.\ if one of the element of $\{-a_j \mid j \in [p]\}$ is not a spanning ray of $M$, then $M$ is contained in $N_{v_j}$ for some $j\in[p]$, which entails that its intersection with $N_{v_j}$ is $M$ itself while the other intersections are not maximal cones; otherwise, i.e.\ if $\{-a_j \mid j \in [p]\}$ is a subset of the spanning set of $M$, then for every $j\in [p]$ the intersection $N\cap N_{v_j}$ is an $n$-cone with $a_\beta$ as a spanning ray, and moreover, the union of all these cones is $M$.

For $j\in[p]$, let $N_{v_j}$ be spanned by the set $\{a_\beta\} \cup \{-a_i \mid i\in [n]-\{j\}\}$.
Recall again that $N_u(S_0)$ is the union of maximal normal cones to $S_{i-1}$ according to Lemma~\ref{l_konusiparalelnetrunkacije}. Namely, the other faces of $S_0$ containing $u$ might be truncated at the previous steps, where the truncation in the vertex $u$ was the first of them. Assume that $h_1$ is an outward normal to the halfspace of that truncation. By Lemma~\ref{l_konusiparalelnetrunkacije}, that truncation produced $n$ maximal normal cones $N_1,\ldots,N_n$ whose union is $N_u(S_0)$, and then, other truncations refined these cones further. We may assume that $N_j$, $j \in [n]$, is spanned by the set $\{h_1\} \cup\{-a_i\mid i\in[n]-\{j\}\}$. Then, applying Remark~\ref{r_haovi} and Lemma~\ref{l_normalezaglavnut} for $I=[p]$ and $J=[n]$, we obtain that $h_1$ is contained in the cone spanned by the set $\{-a_{p+1},-a_{p+2},\ldots,-a_n,a_{\beta}\}$, which entails that for every $j\in[p]$ each spanning ray of $N_j$ is contained in $N_{v_j}$. Hence, $N_j \subseteq N_{v_j}$, i.e.\ every maximal normal cone to $S_{i-1}$ contained in $N_j$ is contained in $N_{v_j}$. 
%Taj konus je u prethodnom nizu paralelnih trunkacijoa podeljeno na uniju $n$ maks konusa (svakom je tu $a_{\beta_i}$)
%(Ako medju preostalima maks konusima sadrzanim u $N_u$ iz $S_{i-1}$ postoji neki koji nije razapet ni jednim h-om osim inicijalnog $h_1$, onda:)Ako je neki od njih maks konus $N$ u $S_{i-1}$ tj. odgovara temenu koje pripada $F$, to znaci da nije nakon trunkiranja $u$ u prethodnim iteracijama trunkirano nista sto to teme sadrzi. Presek takvog konusa sa svakim od $N_{v_p}$ je $n$-konus. Njihova unija je $N$ (to sledi iz toga sto je $a_{\beta}$ simetrala ovih $a_1,...a_p$). Naime, dokaz: $N$ je razapet sa $h_1$ i nekih $n-1$ od $a_1,...,a_n$ (izbacen je neki posle $a_p$). Njegov presek sa $N_{v_j}$ (sve u $N_{v_p}$ je kao kod $N$ ali nema $h_1$ i nema jedan $a_j$,$j\leqslant p$, a ima $a_{\beta}$ i $a_j$ koji ovaj drugi nije imao). Presek im je $a_1,...,a_{p-1}a_{\beta}a_{p+1}...a_nh_1$
Now, if $M$ is one of the remaining maximal cone to $S_{i-1}$ contained in $N_u(S_0)$, then $M$ is spanned by the set obtained from $\{-a_j \mid j \in [n]\}$ by replacing $q$ elements with some vectors $h_1,\ldots,h_q$, such that each of them is an outward normal to the corresponding truncation halfspace. All these truncations were made at some of the previous steps in a face of $S_0$ contained in $F'$.  
%i.e.\ all these truncations were in the faces that correspond to the faces $F_1,\ldots, F_q$ of $S_0$ where $F_1\subset \ldots \subset F_q \subset F'$.
Moreover, all these faces (as well as the corresponding elements of $\mathcal{A}_2$) are mutually comparable. Therefore, by Remark~\ref{r_haovi}, assuming that $h_1=-(a_1+\ldots+a_n)$ and that $h_q$ corresponds to the face witch contains all the others, we conclude that the conditions of Lemma~\ref{l_normalezaglavnut}(ii) are satisfied. Now, we have two cases. Firstly, let us assume that $M$ corresponds to a vertex of $S_{i-1}$ not contained in $F$. Then, for some $j \in [p]$ the ray $-a_j$ is not a spanning ray of $M$. By applying Lemma~\ref{l_normalezaglavnut}(ii) $q-1$ times, we replace the vectors $h_1,\ldots,h_{q-1}$ by the corresponding elements $-a_i$, $p<i\leqslant n$, and obtain that $M$ is contained in the $n$-cone $M'$ spanned by the set $\{h_q\}\cup\{-a_j\mid j \in[n]-\{r\}\}$ for some $r\in[p]$. Since the cone $N_{v_r}$ is spanned by the set \mbox{$\{a_\beta\}\cup\{-a_j\mid j \in[n]-\{r\}\}$}, applying Lemma~\ref{l_normalezaglavnut}(i), we conclude that $M'\subseteq N_{v_r}$, and hence, $M\subseteq N_{v_r}$.  
Otherwise, i.e.\ if $M$ corresponds to some vertex of $S_{i-1}$ contained in $F$, then for every $j \in [p]$, $-a_j$ is a spanning ray of $M$. By Remark~\ref{r_haovi}, $M$ is the union of $n$-cones $N_1,\ldots,N_p$ such that the spanning set of $N_{j'}$, $j'\in[p]$, can be obtained from the spanning set of $M$ by replacing the ray $-a_{j'}$ with the ray $a_\beta$. Now, as above, for each $N_{j'}$ we apply Lemma~\ref{l_normalezaglavnut}(ii) $q$ times. Namely, by replacing the vectors $h_1,\ldots,h_{q}$ with the corresponding elements $-a_i$, where $p<i\leqslant n$, we obtain that $N_{j'}$ is contained in an $n$-cone spanned by the set \mbox{$\{a_\beta\}\cup\{-a_j\mid j \in[n]-\{j'\}\}$}. Since this set spans $N_{v_{j'}}$, we can conclude that for every $j\in [p]$ the intersection $M \cap N_{v_j}$ is $N_{v_j}$, the normal cone to tr$_FS_{i-1}$ at some vertex contained in the truncation hyperplane, and moreover, the union of all these intersections is $M$.
 
Finally, we can conclude that Proposition~\ref{p_dovoljnomakskonuseposmatrati}(ii) holds. Also, all above, together with Lemma~\ref{l_konusiparalelnetrunkacije}, one can verify that there is no vertex of tr$_FS_{i-1}$ which is not obtained in some of the mentioned intersections, i.e.\ the remaining condition is also satisfied. It remains to apply Proposition~\ref{p_dovoljnomakskonuseposmatrati} concluding that $S_{i} \simeq$ tr$_FS_{i-1}$, i.e.\ $Q_{i}$ is a truncator summand for $S_{i-1}$. 

For every $i\in [m]$, at the end of \textit{i}th step, we label facets of $S_i$ in the following manner: the corresponding facets of $S_i$ and $S_{i-1}$ are equilabelled, while the new appeared facet is labelled by $x(i)$ (see Remark~\ref{r_correspondingfacettruncation}). 
At the end, we get $n$-polytope $PA_{n,1}$ as the last obtained sum $S_{m}$. Since for every $i\in[m]$, $Q_{i}$ is a truncator summand for $S_{i-1}$, Definition~\ref{d_Mink_realizacija}(iii) is satisfied. Also, every element of $\mathcal{B}_1$ is used as label for a facet of $PA_{n,1}$ such that equilabelled facets of $PA_{n,1}$ and $\mathbf{PA}_n$ are parallel. This, together with Corollary~\ref{c_labele_odPA_n}, implies $PA_{n,1} \simeq \mathbf{PA}_n$, and hence, Definition~\ref{d_Mink_realizacija}(i) is also satisfied.
\end{proof}

By Corollary~\ref{prop_minkowski_svojstva2}(i), $^M PA_2=PA_{2,1}$ holds, up to translation. Applying Lemma~\ref{l_glavnal} and following the proof of the previous theorem, we obtain the following family of $n$-dimensional Minkowski-realisations of the simplicial complex~$C$.
\begin{thm}\label{t_glavnatfamilija}
For $n\geqslant 2$ and $c\in (0,1]$, the polytope
\[
PA_{n,c}= \Delta_{[n+1]}+\sum_{\beta\in \mathcal{A}_1} \Delta_{\bigcup\beta}+ \sum_{\beta\in \mathcal{A}_2} \bigl (P_{\mathcal{B}_\beta}\bigcap {\pi_{\beta,c}}^\geqslant \bigr )
\]
is an $n$-dimensional Minkowski-realisation of the simplicial complex $C$, which is normally equivalent to $\mathbf{PA}_n$.
\end{thm}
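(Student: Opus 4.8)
The strategy is to deduce the statement from Theorem~\ref{t_glavnat} and Lemma~\ref{l_glavnal}, observing that the proof of Theorem~\ref{t_glavnat} is in fact uniform in $c\in(0,1]$. For $c=1$ nothing is needed: by Lemma~\ref{l_glavnal} we have $P_{\mathcal{B}_\beta}\cap\pi_{\beta,1}^{\geqslant}=N_\beta$ for every $\beta\in\mathcal{A}_2$, so the polytope $PA_{n,1}$ of the present statement is literally the polytope of Theorem~\ref{t_glavnat}. Fix then $c\in(0,1)$ and set $\varphi_c(\beta)=\Delta_{\bigcup\beta}$ for $\beta\in\mathcal{A}_1$ and $\varphi_c(\beta)=P_{\mathcal{B}_\beta}\cap\pi_{\beta,c}^{\geqslant}$ for $\beta\in\mathcal{A}_2$, so that $PA_{n,c}=\Delta_{[n+1]}+\sum_{\beta\in\mathcal{B}_1}\varphi_c(\beta)$ and Definition~\ref{d_Mink_realizacija}(ii) holds by construction.

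Next I would record the three properties of the summands $\varphi_c(\beta)$, $\beta\in\mathcal{A}_2$, that drive the argument. By Lemma~\ref{l_glavnal}, for $c\in(0,1)$ the polytope $\varphi_c(\beta)$ is the parallel truncation $\text{tr}_{F_\beta}P_{\mathcal{B}_\beta}$; hence \textbf{(a)} it is an $n$-polytope and an $F_\beta$-deformation of $P_{\mathcal{B}_\beta}$ (Example~\ref{e_trunkacijajedeformacija}); \textbf{(b)} the $(n-1)$-face it contributes in $\pi_{\beta,c}$ equals $P_{\mathcal{B}_\beta}\cap\pi_{\beta,c}$ by the very definition of a truncation, and this face is normally equivalent to the face $S_0\cap\alpha_0$ cut from the $n$-permutohedron $S_0=\Delta_{[n+1]}+\sum_{\beta\in\mathcal{A}_1}\Delta_{\bigcup\beta}$ by the associated parallel truncation; and \textbf{(c)} the outward normal direction $a_\beta$ of that new facet is, by Remark~\ref{r_haovi}, the sum of the outward normals of the facets through $F_\beta$, hence independent of $c$. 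These are exactly the three ingredients that the proof of Theorem~\ref{t_glavnat} uses about $N_\beta$ --- where property \textbf{(b)} in the case $c=1$ required the delicate identification of the $\pi_{\beta,1}$-face of $N_\beta$ as $\mathrm{conv}\,U$, and is now immediate.

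With \textbf{(a)}--\textbf{(c)} established I would transcribe the proof of Theorem~\ref{t_glavnat} verbatim, replacing every occurrence of $N_\beta$ by $\varphi_c(\beta)$. Thus $PA_{n,c}$ is at least $n$-dimensional by Corollary~\ref{prop_minkowski_svojstva2}(ii); the partial sum $S_0$ is the $n$-permutohedron realising $C_1$, with facets relabelled by $\mathcal{A}_1$ so as to be parallel to the equilabelled facets of $\mathbf{PA}_n$ (Corollary~\ref{c_labele_odPA_n}); each $\varphi_c(\beta)$ is first shown to be a truncator summand for $S_0$ via Definition~\ref{d_pideformacija} and Proposition~\ref{l_deformacijajesingleprofinjenje}; and then, for an indexing function $x\colon[m]\to\mathcal{A}_2$ with $\lvert x(i)\rvert\geqslant\lvert x(j)\rvert$ for $i<j$ and $S_i=S_{i-1}+\varphi_c(x(i))$, one checks inductively that $\varphi_c(x(i))$ is a truncator summand for $S_{i-1}$ --- using the $F$-deformation property when $\dim F_{x(i)}=0$, and otherwise the normal-cone bookkeeping of Lemma~\ref{l_konusiparalelnetrunkacije}, Lemma~\ref{l_zvezda}, Lemma~\ref{l_normalezaglavnut} and Proposition~\ref{p_dovoljnomakskonuseposmatrati}, none of which refers to the value of $c$. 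The facet-labelling at the end of the induction gives that $PA_{n,c}$ realises $C$ and that equilabelled facets of $PA_{n,c}$ and $\mathbf{PA}_n$ are parallel, whence $PA_{n,c}\simeq\mathbf{PA}_n$ by Proposition~\ref{p_paralelni_feseti}; so Definition~\ref{d_Mink_realizacija}(i) and (iii) hold as well.

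I do not expect a genuinely new obstacle: the content is the observation that the argument for $c=1$ was already uniform in $c\in(0,1]$, since everywhere it used only that $\varphi_c(\beta)$ is an $F_\beta$-deformation of $P_{\mathcal{B}_\beta}$ with the $c$-independent new-facet normal $a_\beta$ and with $\varphi_c(\beta)\cap\pi_{\beta,c}\simeq S_0\cap\alpha_0$. The one step that deserves a line of care is property \textbf{(b)}, and for $c<1$ it is strictly weaker than in the case already treated; so effectively the proof amounts to recording that Theorem~\ref{t_glavnat} and Lemma~\ref{l_glavnal} together do all the work.
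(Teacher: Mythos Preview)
Your proposal is correct and matches the paper's own approach: the paper simply states that the theorem follows by ``applying Lemma~\ref{l_glavnal} and following the proof of the previous theorem,'' which is exactly the uniformity-in-$c$ argument you spell out. Your identification of properties \textbf{(a)}--\textbf{(c)} as the only features of $N_\beta$ actually used in the proof of Theorem~\ref{t_glavnat} is a helpful elaboration of what the paper leaves implicit.
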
 

\begin{center}\textmd{Acknowledgements} 
\end{center}
\medskip I thank Zoran Petri\' c for support and very helpful discussions during my entire research. The paper is also supported by the Grants $III44006$ of
the Ministry for Education and Science of the Republic of Serbia.

{\small
\noindent author: Jelena Ivanovi\' c\\
address: University of Belgrade, Faculty of Architecture\\ Bulevar Kralja Aleksandra 73/II\\ 11000 Belgrade, Serbia\\
e-mail: jelena.ivanovic@arh.bg.ac.rs
}
\end{document}